\newtheorem{theorem}{Theorem}[section]
\newtheorem{corollary}[theorem]{Corollary}
\newtheorem{lemma}[theorem]{Lemma}
\theoremstyle{definition}
\newtheorem{definition}[theorem]{Definition}
\newtheorem{remark}[theorem]{Remark}
\numberwithin{equation}{section}
\newcommand{\eps}{\varepsilon}
\DeclareMathOperator*{\argmin}{\arg\min}
\newcommand{\bbC}{\mathbb C} 
 \newcommand{\bbN}{\mathbb N}
\newcommand{\bbP}{\mathbb P} 
\newcommand{\bbR}{\mathbb R} 
\newcommand{\bbZ}{\mathbb Z} 
\newcommand{\bbT}{\mathbb T}
\newcommand{\bzero}{{\mathbf 0}}
\newcommand{\bc}{\mathbf c}
 \newcommand{\bx}{\mathbf x} 
\newcommand{\by}{\mathbf y}
\newcommand{\cA}{\mathcal A} \newcommand{\cB}{\mathcal B}
\newcommand{\cC}{\mathcal C}  
 \newcommand{\cF}{\mathcal F}
\newcommand{\cG}{\mathcal G} 
\newcommand{\cK}{\mathcal K} \newcommand{\cL}{\mathcal L}
\newcommand{\cO}{\mathcal O}  
\newcommand{\cS}{\mathcal S} 
\newcommand{\cU}{\mathcal U}
\newcommand{\aver}[1]{\langle {#1} \rangle}
\title[Learn PDE from its solution]{{How much can one learn a partial differential equation from its solution?}}
\author[Y. He]{Yuchen He}
\address[Y. He]{School of Mathematical Sciences, Shanghai Jiao Tong University, Shanghai 200240, China}
\email{{\tt yuchenroy@sjtu.edu.cn}}
\author[H. Zhao]{Hongkai Zhao}
\address[H. Zhao]{Department of Mathematics, Duke University, NC 27705}
\email{\tt zhao@math.duke.edu}
\author[Y. Zhong]{Yimin Zhong}
\address[Y. Zhong]{Department of Mathematics and Statistics, Auburn University, AL 36830}
\email{\tt yimin.zhong@auburn.edu}
\keywords{PDE learning, elliptic differential operator, parabolic PDE, hyperbolic PDE, operator spectrum, regression}
\subjclass[2010]{35R30, 35G10, 47K10, 65M32}
\begin{document}

\begin{abstract}
In this work, we study the problem of learning a partial differential equation (PDE) from its solution data. PDEs of various types are used to illustrate how much the solution data can reveal the PDE operator depending on the underlying operator and initial data. A data-driven and data-adaptive approach based on local regression and global consistency
is proposed for stable PDE identification. Numerical experiments are provided to verify our analysis and demonstrate the performance of the proposed algorithms.
\end{abstract}

\maketitle


\section{Introduction}
Partial differential equations (PDE) have been used as a powerful tool in modeling, studying, and predicting in science, engineering, and many real-world applications.
Many of them are naturally time-dependent, i.e., modeling the dynamics of an underlying system that is evolving in time,  such as heat/diffusion equation, convection/transport equation, Schr\"{o}dinger equation, 
Navier-Stokes equation.
In particular, a PDE can be an effective way to model the underlying dynamics or represent a map from the initial input to the terminal output.
Effectiveness comes from the fact that, while a PDE model typically does not have many terms, it can capture various physical laws, diverse mechanisms, and rich dynamics.
Moreover, a PDE model is easy to interpret and the parameters (coefficients) are  meaningful.  

In the past, most of these equations are derived from basic physical laws and assumptions, such as Newton's laws of motion~\cite{marion2013classical}, conservation laws~\cite{phillipson2009modeling}, Fick's laws of diffusion~\cite{fick1855ueber}, etc., plus simplification/approximation to various extent. They have been extremely successful in modeling and studying physical, biological, environmental, and social systems and solving real-world problems. With the advance of technologies, abundant data are available from measurements and observations in many complex situations where the underlying model is not yet available or not accurate enough. Whether one can learn a PDE model directly from measured or observed data becomes an interesting question. Here, data mean the observed/measured solution and its (numerically computed) derivatives, integrals, and other transformed/filtered quantities. 

Many methods have been proposed for PDE learning~\cite{Wu-Xiu-2020,bongard2007automated,long2018pde,shea2021sindy,schmidt2009distilling,messenger2021weak,schaeffer2017learning,he2020robust,kang2021ident,long2019pde,rudy2017data,rudy2019data,raissi2018hidden,schaeffer2013sparse,he2021asymptotic,xu2019dl,fasel2021ensemble,kaheman2020sindy,xu2022physics}. In general, there are two approaches. One is to directly approximate the operator/mapping, i.e. $u(t)\rightarrow u(t+\Delta t)$,  learned/trained from solution data. We call this approach differential operator approximation (DOA). The approximation is restricted to a chosen finite dimensional space such as on discretized meshes in the physical domain, or a transformed space, e.g., Fourier/Galerkin space. In particular, several recent works proposed using various types of neural networks to approximate the operator/mapping  \cite{li2020fourier, Wu-Xiu-2020,long2018pde,long2019pde}. Although this approach is general and flexible, it does not explicitly reconstruct the differential operator and hence does not help to understand the underlying physics or laws. Also, it does not take advantage of the compact parameterized form of a differential operator and hence requires a large degree of freedom to represent the mapping. Moreover, it means more data and more computational costs are required to train such a model. 
Another approach is to approximate the underlying PDE using a combination of candidates from a dictionary of  basic differential operators and their functions. We call this approach differential operator identification (DOI), which determines the explicit form of differential operators. Since DOI uses terms from a predetermined dictionary (prior knowledge) to approximate the underlying PDE, it involves much fewer degrees of freedom, data, and computational cost compared to the previous approach. Moreover, using the explicit form of differential operator allows approximation using local measurements in space and time, which is more practical in real applications. Once found it is also more helpful to understand the underlying physics or laws.  Several models and methods \cite{bongard2007automated,long2018pde,schmidt2009distilling,messenger2021weak,schaeffer2017learning,he2020robust,kang2021ident,long2019pde,rudy2017data,rudy2019data,raissi2018hidden,schaeffer2013sparse} have been proposed along this line. 
However, most of the previous works are focused on PDE identification with constant coefficients, which has only a few unknowns.
The problem is formulated as a regression problem from a prescribed dictionary, e.g., polynomials of the solution and its derivatives. Model selection by promoting sparsity may be applied.
Typically single or multiple solutions to the PDE are sampled on a global and dense rectangular grid in space and time (from $t=0$) to identify a PDE model. 

A common issue in most of the previous studies is that PDE learning is posed in an ideally controlled setup in terms of both initial data and the number of solution trajectories, which are as diverse and as many as one wants. Moreover, the solution data are sampled on a uniform and dense grid in space and time, including $t=0$. In many real applications, one only has the chance to observe a phenomenon and its dynamics once it happens. 
For examples, recording seismic waves~\cite{douglas2016recent}, satellite data of weather development~\cite{hasler1998high}, remote sensing of air pollution~\cite{beran1974remote}, etc.  Either the event does not happen often or when it happens next time, the environment and setup are very different.  This means that the observation data is the solution to a PDE corresponding to one initial condition, i.e, a single trajectory.  Moreover, one may not afford if not impossible to measure or observe the data everywhere in space and time (especially close to initial time). In other words, only one solution trajectory with uncontrollable initial data measured by local sensors (for some time duration) at certain locations after some time lapses may be available for PDE learning in practice. In addition to these realities,  heterogeneous environment or material properties requires PDE learning with unknown coefficients varying in space and time, which is even more challenging due to the coupling of the unknown functions with the solution data. 
Last but not least, not all solution data should be used in PDE learning indifferently since PDE solution may be locally degenerate, e.g., zero, or singular in certain regions in space and time,
which may lead to significant errors due to measurement noise and/or numerical discretization errors.

In this work, we will study a few basic questions in PDE learning using various types of PDE models. In particular, we will characterize the data space, the dimension of the space spanned by all snapshots of a solution trajectory 
with certain tolerance, and show how it is affected by the underlying PDE operator and initial data. This information is of particular importance for DOA approaches. Since the operator approximation is trained by snapshots along a trajectory, it means the operator can be possibly learned only in this data space. Then we focus on the PDE identification problem using a single trajectory. We first study the identifiability issue and then propose a data-driven and data-adaptive approach for general PDEs with variable coefficients. The main goal is to identify the PDE type correctly and robustly using a minimal amount of local data. The required data are a few patches of local measurements that can resolve the variation of the coefficients and the solution. Our proposed Consistent and Sparse Local Regression (CaSLR) method finds a differential operator which is 1) globally consistent, 2) built from as few terms as possible from the dictionary, and 3) a good local fit to data using different linear combinations at different locations. Once the PDE type is determined, a more accurate estimation of coefficients can be achieved by using regression on (more refined) local measurements and/or proper regularization. Numerical examples are used to verify our analysis and demonstrate the performance of the proposed algorithms. 

We note that our CaSLR method has some similarities to the method proposed in ~\cite{rudy2019data} which uses group sparsity to enforce global consistency.
 However,~\cite{rudy2019data} uses solution data on a dense grid in space and time and restrained their exploration to PDEs with coefficients varying either in space or in time, but not both. When the coefficients depend only on space, they propose to identify a constant coefficient PDE for each time using the corresponding snapshot data in whole space, and they address the case where coefficients depend only on time analogously. The CaSLR is different from this previous work in several major aspects. First, our method identifies PDE with coefficients varying both in space and time based on patches of local data.  Second, a different feature identification process that does not require additional thresholding is used. We also provide an identification guarantee analysis. 

In our theoretical analysis, we use linear evolution partial differential equations of the following form as examples:
\begin{equation}\label{EQ: MODEL}
\begin{aligned}
         \partial_t u (x, t) &= -\cL u(x, t), \quad (x,t)\in\Omega\times [0, T],\\
         u(x, 0) &= u_0(x).
\end{aligned}
\end{equation}
To avoid the complication of different possible boundary conditions for different operator $\cL$ in our study,  we consider the periodic setup: $\Omega = \bbT^d:= \bbR^d \slash (2\pi\bbZ)^d$ as the $d$-torus. The time-independent linear differential operator $\cL$ is parameterized by the unknown coefficients $\{ p_{\alpha}(x) \}_{|\alpha|=0}^n\subset \cK(\Omega; \bbR)$,
\begin{equation}\label{EQ: OP}
    \cL f(x) = \sum_{|\alpha|=0}^{n} p_{\alpha}(x) \partial^{\alpha} f(x),
\end{equation}
where $\cK(\Omega;\bbR)$ is a certain set of real-valued periodic functions. 
\begin{remark}
When one learns a PDE operator, a source term should not be part of the unknown. Otherwise, there is no unique solution. 
\end{remark}

\section{Data space spanned by solution trajectory }\label{SEC: SOL SPACE}
As discussed before, a single solution trajectory, i.e., the solution $u(x,t)$ corresponding to an initial condition, is likely what one can observe in practice. Hence a basic question in PDE learning from its solution is how large is the data space spanned by all snapshots $u(x,t), 0\le t\le T<\infty$, along a solution trajectory. In the DOA approach, snapshot pairs $(u(x,t), u(x,t+\Delta t))$ are typically used to train the approximation, which implies that the action of the operator restricted to this data space can be observed. In the following study, we characterize the information content of a solution trajectory by estimating the least dimension of a linear space (in $L^{2}(\Omega)$) where all snapshots $u(\cdot,t)$, $0\le t\le T$ are $\eps$ close to in $L^2$ norm. This characterization is dual to the Kolmogorov $n$-width \cite{kolmogoroff1936uber} of the solution trajectory as a family of functions in $L^2(\Omega)$ parameterized by time $t$.

As examples, we use two different types of operators where $\cL$ is, 1) a strongly elliptic operator, and 2) a first-order hyperbolic operator, to show different behaviors. These two types of operators are ubiquitous for physical models such as diffusion equation, viscous Stokes flow, transport equation, etc. Intuitively, when $\cL$ is the Laplace operator, the solution stays close to a low dimensional space since $u(\cdot, t)$, $t>0$ is analytic in space due to the smoothing effect.  We show in Theorem~\ref{THM: RANK} that for a general elliptic operator $\cL$, all snapshots of any single trajectory $u(x, t)$, i.e., $u(\cdot,t)$, $0<t<T$, stays $\eps$ close to a linear space of dimension at most of the order $O(|\log\eps|^2)$. This implies the intrinsic difficulty in a direct approximation of the mapping, $u(x,t)\rightarrow u(x,t+\Delta t)$ for small $\Delta t$, i.e., the generator, for a parabolic differential operator by a single trajectory since the test function space spanned by all snapshots of a solution trajectory is very small. 
On the other hand, if $\cL$ is a first-order hyperbolic operator, the data space spanned by all snapshots of a single trajectory stays $\eps$ close to linear space of dimension  $O(\eps^{-\gamma})$, where $\gamma$ depends on the regularity of the initial data.   

\subsection{Strongly elliptic operator}
\label{sec:elliptic}
\subsubsection{Preliminaries}
Let $X$ be a Banach space and $\cA: X\to X$ is a linear, densely defined, closed operator  with the spectral set $\text{sp}(\cA)$ and the resolvent set $\rho(\cA)$.
\begin{definition}
Let the sector region $\Sigma_{\delta}\subset\bbC$ be 
\begin{equation}
    \Sigma_{\delta} = \{ z\in\bbC:|\arg (z) |\le \delta\},\quad \delta\in (0, \frac{\pi}{2})\,,
\end{equation}
We say the operator $\cA$ is admissible if 
\begin{equation}
    \|(z - \cA)^{-1}\|_{X\to X} \le \frac{M}{1+|z|} \quad \text{for all } z\in \bbC\slash \Sigma_{\delta}\,,
\end{equation}
where $M$ is a positive constant.
\end{definition}
When the operator $\cA$ is admissible, one can take a suitable contour integration along $\cC$ for the following Dunford-Cauchy integral
\begin{equation}
    e^{-\cA t} = \frac{1}{2\pi i}\int_{\cC} e^{-zt} (z - \cA)^{-1} d z\,,
\end{equation}
the evaluation of $e^{-\cA t}$ then can be approximated through a well-designed quadrature scheme on $\cC$. It has been shown in~\cite{lopez2006spectral} that the approximation can have exponential convergence. Similar results are found in~\cite{gavrilyuk2002, gavrilyuk2004data} as well. Such exponential convergence also enables fast algorithms for solutions of~\eqref{EQ: MODEL}, see~\cite{jiang2015efficient}.
\begin{theorem}[Theorem 1,~\cite{lopez2006spectral}]
If the operator $\cA$ is admissible,
then there exists an operator $\cA_N$ in the form of
\begin{equation}
    \cA_N(t) = \sum_{k=-N}^N c_k e^{-z_kt} (z_k - \cA)^{-1}
\end{equation}
with constants $c_k, z_k\in \bbC$ and 
\begin{equation}\label{EQ: ESTIMATE}
    \| e^{-\cA t}- \cA_N(t) \|_{X\to X}= \cO(e^{-c N})
\end{equation}
uniformly on the time interval $[t_0, \Lambda t_0]$ with $c = \cO(1/\log \Lambda)$.
\end{theorem}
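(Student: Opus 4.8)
The plan is to start from the Dunford-Cauchy representation
\[
e^{-\cA t} = \frac{1}{2\pi i}\int_{\cC} e^{-zt}(z-\cA)^{-1}\,dz
\]
and to obtain $\cA_N(t)$ by discretizing this contour integral with a truncated quadrature rule. First I would fix the contour $\cC$ concretely as a left-opening hyperbola that separates the spectrum $\mathrm{sp}(\cA)\subset\Sigma_\delta$ from the region of integration, parametrized smoothly as $z=z(\xi)$, $\xi\in\bbR$, with $\mathrm{Re}\,z(\xi)\to-\infty$ as $|\xi|\to\infty$. Substituting this parametrization rewrites the representation as $\frac{1}{2\pi i}\int_{-\infty}^{\infty} e^{-z(\xi)t}(z(\xi)-\cA)^{-1} z'(\xi)\,d\xi$, and applying the uniform-step trapezoidal rule on the nodes $\xi_k=kh$, $|k|\le N$, produces exactly the claimed form with $z_k=z(\xi_k)$ and $c_k=\frac{h}{2\pi i}z'(\xi_k)$.

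The error then splits into a discretization part and a truncation part, to be estimated separately. For the discretization part I would invoke the classical exponential convergence of the trapezoidal rule for integrands analytic in a horizontal strip $|\mathrm{Im}\,\xi|<d$ about the real axis: the admissibility bound $\|(z-\cA)^{-1}\|_{X\to X}\le M/(1+|z|)$ ensures that $\xi\mapsto e^{-z(\xi)t}(z(\xi)-\cA)^{-1} z'(\xi)$ continues analytically into such a strip, provided the shifted contour remains outside $\Sigma_\delta$, giving a bound of order $e^{-2\pi d/h}$. For the truncation part I would use that $\mathrm{Re}\,z(\xi)\to-\infty$, so $\|e^{-z(\xi)t}\|$ decays faster than any power as $|\xi|\to\infty$; together with the resolvent bound this controls the discarded tail $|k|>N$ by a term of order $e^{-\phi(Nh)}$ for an increasing $\phi$ dictated by the contour's growth.

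The crux is to balance these two competing contributions by choosing the step $h$ together with the geometric parameters of the hyperbola as functions of $N$, so that the strip-width gain $e^{-2\pi d/h}$ and the tail decay $e^{-\phi(Nh)}$ are of comparable size, yielding the overall rate $\cO(e^{-cN})$. The main obstacle --- and the origin of the factor $c=\cO(1/\log\Lambda)$ --- is enforcing this balance uniformly over the entire interval $[t_0,\Lambda t_0]$. Both the usable strip width $d$ and the decay profile $\phi$ depend on $t$ through the factor $e^{-z(\xi)t}$, so a contour tuned for the smallest time $t_0$ (where the decay is weakest and truncation is the bottleneck) must still perform at the largest time $\Lambda t_0$; accommodating the whole range forces the contour's aperture to scale with $t$, which shrinks the effective strip width logarithmically in the ratio $\Lambda$. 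Tracking this scaling carefully is the one genuinely delicate step, and it is what degrades the constant from an $\cO(1)$ quantity to one of order $1/\log\Lambda$.
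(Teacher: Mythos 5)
You should first be aware that the paper contains no proof of this statement: it is quoted as Theorem~1 of the cited reference \cite{lopez2006spectral} and is used only as an input (via the corollary that follows it). So your attempt can only be measured against the proof in that reference, and your overall strategy is in fact the same as the one used there: parametrize the Dunford--Cauchy contour as $z=z(\xi)$, $\xi\in\bbR$, apply the uniform-step trapezoidal (sinc) rule with nodes $\xi_k=kh$, $|k|\le N$, so that $c_k=\frac{h}{2\pi i}z'(\xi_k)$, split the error into a discretization part governed by analyticity of the integrand in a strip $|\Im\xi|<d$ (of size $\cO(e^{-2\pi d/h})$) and a truncation part governed by the decay of $e^{-z(\xi)t}$, and then balance $h$ against $N$ uniformly over $t\in[t_0,\Lambda t_0]$. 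Your explanation of why the rate constant degrades to $c=\cO(1/\log\Lambda)$ --- a single contour and step size must serve the whole range of times, with truncation worst at $t_0$ and the effective aperture/strip width shrinking logarithmically in $\Lambda$ --- is also the correct mechanism.

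There is, however, a concrete inconsistency in your setup that breaks the argument as written. In this paper's convention the spectrum of $\cA$ lies in the \emph{right} sector $\Sigma_\delta$ and the semigroup is $e^{-\cA t}=\frac{1}{2\pi i}\int_{\cC}e^{-zt}(z-\cA)^{-1}\,dz$, so $|e^{-z(\xi)t}|=e^{-t\,\Re z(\xi)}$ decays only if $\Re z(\xi)\to+\infty$ along the contour; the admissible contours are \emph{right}-opening curves enclosing $\Sigma_\delta$ (e.g.\ a hyperbola with asymptotic half-angle strictly between $\delta$ and $\pi/2$). You instead prescribe a left-opening hyperbola with $\Re z(\xi)\to-\infty$ and then assert that $e^{-z(\xi)t}$ decays; with that contour the integrand grows like $e^{t|\Re z(\xi)|}$, the Dunford--Cauchy integral itself does not converge, and the truncation estimate --- precisely the step where you invoke this decay --- fails. (You appear to have imported the contour from the opposite sign convention, where the spectrum sits in a left sector and the kernel is $e^{+zt}$.) The fix is purely mechanical: flip the contour so that it encircles $\Sigma_\delta$ with $\Re z(\xi)$ growing (like a hyperbolic cosine of the parameter, giving doubly exponential tail decay at the worst time $t=t_0$), after which your balancing of $e^{-2\pi d/h}$ against the tail and your uniformity argument over $[t_0,\Lambda t_0]$ go through as in the cited work.
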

It should be noticed that in~\eqref{EQ: ESTIMATE} the exponent's dependence on time is quite weak. By a simple modification of the above theorem, we have tailored the following corollary for later use.
\begin{corollary}\label{COR: DIM}
For $t\in [t_0, T]$ that $t_0 = \eps^{\kappa}$, $\kappa>0$, one can take $N = C_{\cA}(\kappa)|\log\eps|^2$ such that
\begin{equation}\label{EQ: COR ESTIMATE}
   \| e^{-\cA t}- \cA_N(t) \|_{X\to X} \le \eps
\end{equation}
for certain constant $C_{\cA}(\kappa) > 0$ depending on $\cA$ and $\kappa$.
\end{corollary}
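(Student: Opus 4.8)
The plan is to derive Corollary~\ref{COR: DIM} directly from the quoted Theorem by making the right identification of the time window $[t_0,\Lambda t_0]$ with $[t_0,T]$. Concretely, I would set
\[
t_0=\eps^{\kappa},\qquad \Lambda=\frac{T}{t_0}=T\eps^{-\kappa},
\]
so that $[t_0,\Lambda t_0]=[t_0,T]$ (legitimate once $\eps$ is small enough that $\eps^{\kappa}\le T$), and a single invocation of the Theorem produces an operator $\cA_N(t)$ of the stated form together with the bound $\| e^{-\cA t}-\cA_N(t) \|_{X\to X}\le K e^{-cN}$, uniformly for $t\in[t_0,T]$, where the rate constant satisfies $c\ge c_1/\log\Lambda$ for some $c_1>0$ depending only on $\cA$ (through $M$ and $\delta$), and $K$ is the prefactor hidden in the $\cO$-notation.

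Next I would unwind the logarithms. Since $\log\Lambda=\log T+\kappa|\log\eps|$, for all sufficiently small $\eps$ we have $\log\Lambda\le C_2|\log\eps|$ with $C_2=C_2(\kappa,T)$, hence $1/c\le \log\Lambda/c_1\le (C_2/c_1)|\log\eps|$. To force $Ke^{-cN}\le\eps$ it suffices that $cN\ge |\log\eps|+\log K$, i.e.\ that $N$ be at least $(|\log\eps|+\log K)/c$, and this threshold is in turn bounded by
\[
\frac{|\log\eps|+\log K}{c}\le \frac{C_2}{c_1}\,\big(|\log\eps|+\log K\big)\,|\log\eps|.
\]
For small $\eps$ the factor $|\log\eps|+\log K$ is at most $C_1|\log\eps|$, so choosing
\[
N=C_{\cA}(\kappa)\,|\log\eps|^2,\qquad C_{\cA}(\kappa):=\frac{C_1 C_2}{c_1},
\]
yields $\| e^{-\cA t}-\cA_N(t) \|_{X\to X}\le\eps$ for every $t\in[t_0,T]$, which is exactly \eqref{EQ: COR ESTIMATE}. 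Replacing the right-hand side by its ceiling keeps $N$ an integer without changing the order.

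The hard part, such as it is, will be pinning down the precise meaning of the statement $c=\cO(1/\log\Lambda)$ in the Theorem. For the estimate above I need it as a \emph{lower} bound on the achievable rate, namely that one may take $c\ge c_1/\log\Lambda$; this is indeed how the contour-quadrature construction in~\cite{lopez2006spectral} behaves, the rate degrading only logarithmically as the dynamic range $\Lambda$ of the time interval grows. A secondary point is the dependence of the prefactor $K$ on $\Lambda$: even if $K$ were to grow, say, polynomially in $\Lambda=T\eps^{-\kappa}$, then $\log K=\cO(|\log\eps|)$ and the threshold for $N$ remains of order $|\log\eps|^2$, so the conclusion is robust. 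Everything else is bookkeeping with the constants $c_1,C_1,C_2$, all of which are absorbed into $C_{\cA}(\kappa)$.
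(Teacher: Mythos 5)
Your proof is correct and is precisely the ``simple modification'' the paper alludes to when stating Corollary~\ref{COR: DIM} without an explicit proof: instantiate the cited theorem with $\Lambda = T\eps^{-\kappa}$ so that $[t_0,\Lambda t_0]=[t_0,T]$, observe $\log\Lambda = \log T + \kappa|\log\eps| = \cO(|\log\eps|)$ so the rate $c$ degrades like $1/|\log\eps|$, and conclude that $N = C_{\cA}(\kappa)|\log\eps|^2$ drives the error below $\eps$. Your two side remarks --- that $c=\cO(1/\log\Lambda)$ must be read as a lower bound $c\ge c_1/\log\Lambda$ on the achievable rate, and that a prefactor $K$ growing polynomially in $\Lambda$ only contributes $\log K=\cO(|\log\eps|)$ and so does not change the order --- are exactly the right points to make explicit.
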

\subsubsection{Properties of strongly elliptic operator}
Assume $\cL$ is a strongly elliptic operators of order $n=2m$, that is, its principal part
\begin{equation}
    (-1)^{n/2} \sum_{|\alpha| = n} p_{\alpha }(x) \xi^{\alpha} \ge \nu |\xi|^n
\end{equation}
for some constant $\nu > 0$ for all $x\in\overline{\Omega}$, $\xi\in\bbZ^d$. Let the Hilbert space $H^{s}(\Omega)$ 
\begin{equation}
    H^s(\Omega):= \Big\{f\in L^2(\Omega): \sum_{\xi\in\bbZ^d} (1 + |\xi|^{s})^2 |\widehat{f}(\xi)|^2 <\infty \Big\},
\end{equation}
we cite the following two classical lemmas~\cite{agmon2010lectures, pazy2012semigroups} for our case.
\begin{lemma}\label{LEM: CLOESD}
There exists a constant $C > 0$ such that 
\begin{equation}
    \|u\|_{H^{2m}(\Omega)} \le C \left(\|\cL u\|_{L^2(\Omega)} + \|u\|_{L^2(\Omega)}\right)
\end{equation}
for all $u\in H^{2m}(\Omega)$.
\end{lemma}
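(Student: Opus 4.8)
The plan is to recognize this as the standard elliptic \emph{a priori} estimate and to prove it by freezing the coefficients of the principal part, handling the resulting constant-coefficient operator by Fourier series on the torus, and absorbing every remaining term by interpolation. I would proceed in three stages, using throughout that all $p_\alpha$ are bounded and the top-order coefficients are (uniformly) continuous on the compact torus $\bbT^d$.

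First I would dispose of the constant-coefficient principal part. Let $\cL_0 = \sum_{|\alpha|=n} a_\alpha \partial^\alpha$ with real constants $a_\alpha$ satisfying the strong ellipticity bound. Since $\widehat{\partial^\alpha u}(\xi) = (i\xi)^\alpha \widehat{u}(\xi)$ and $(i\xi)^\alpha = i^{n}\xi^\alpha = (-1)^{m}\xi^\alpha$ for $|\alpha| = n$, the symbol is $P(\xi) = (-1)^{m}\sum_{|\alpha|=n} a_\alpha \xi^\alpha$, which by strong ellipticity is real and obeys $P(\xi) \ge \nu|\xi|^{2m}$ for all $\xi\in\bbZ^d$. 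Hence $|\widehat{\cL_0 u}(\xi)| \ge \nu |\xi|^{2m}|\widehat u(\xi)|$, and Parseval gives $\sum_{\xi} |\xi|^{4m}|\widehat u(\xi)|^2 \le \nu^{-2}\norm{\cL_0 u}_{L^2(\Omega)}^2$. Because the $H^{2m}(\Omega)$ norm is equivalent to $(\sum_\xi (1+|\xi|^{2m})^2|\widehat u(\xi)|^2)^{1/2}$, this top-order bound together with $\norm{u}_{L^2(\Omega)}$ yields the claimed inequality for $\cL_0$.

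Next I would localize to treat the variable-coefficient principal part $\cL_p = \sum_{|\alpha|=n} p_\alpha(x)\partial^\alpha$. Using uniform continuity of the $p_\alpha$, I would fix a finite partition of unity $\{\phi_j\}$ subordinate to balls $B_j = B(x_j, r)$ chosen small enough that $|p_\alpha(x) - p_\alpha(x_j)| < \eta$ on $B_j$, for a parameter $\eta$ to be fixed later. On each patch,
\[
\cL_{x_j}(\phi_j u) = \cL_p(\phi_j u) - \sum_{|\alpha|=n}\big(p_\alpha - p_\alpha(x_j)\big)\partial^\alpha(\phi_j u),
\]
where $\cL_{x_j}$ freezes the coefficients at $x_j$. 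Applying the constant-coefficient estimate to $\phi_j u$ and bounding the coefficient-difference term in $L^2$ by $C\eta\,\norm{\phi_j u}_{H^{2m}(\Omega)}$ (the difference being supported where $\phi_j u\neq 0$), I would take $\eta$ small so that this term is absorbed into the left-hand side.

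Finally I would clean up the remaining terms by interpolation. The commutators $[\cL_p, \phi_j]u$ and the gap $(\cL - \cL_p)u$ involve only derivatives of order at most $2m-1$, so their $L^2$ norms are controlled by $C\norm{u}_{H^{2m-1}(\Omega)}$; the interpolation inequality $\norm{u}_{H^{s}(\Omega)} \le \delta\norm{u}_{H^{2m}(\Omega)} + C_\delta\norm{u}_{L^2(\Omega)}$, which on the torus is immediate from $|\xi|^{2s} \le \delta|\xi|^{4m} + C_\delta$, then lets me absorb these as well. Summing over the finitely many patches gives the result. I expect the main obstacle to be the bookkeeping of the absorptions: one must verify that the finitely many patch constants, the commutator bounds, and the lower-order bounds can all be absorbed simultaneously with a single choice of $\eta$ and $\delta$. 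This forces a careful ordering of the choices --- fixing the partition of unity, and hence the number of patches, \emph{before} selecting the absorption parameters --- so that no constant depends circularly on the quantity it is meant to control.
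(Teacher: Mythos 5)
The paper does not actually prove this lemma: it is quoted as a classical result with citations to Agmon's lectures and Pazy's semigroup book, so there is no in-paper argument to compare against. Your proof is the standard one that those references contain, and it is correct: the constant-coefficient principal part is handled by Parseval on the torus using $(i\xi)^\alpha = (-1)^m \xi^\alpha$ for $|\alpha|=2m$ and the strong-ellipticity lower bound on the (real) symbol; the variable-coefficient principal part is reduced to that case by freezing coefficients on a partition of unity with oscillation at most $\eta$; and the commutators $[\cL_p,\phi_j]$ together with the lower-order part $\cL-\cL_p$ involve only derivatives of order at most $2m-1$, hence are absorbed by the interpolation inequality $\|u\|_{H^{2m-1}(\Omega)} \le \delta \|u\|_{H^{2m}(\Omega)} + C_\delta\|u\|_{L^2(\Omega)}$, which on the torus is immediate from Young's inequality on the symbol side. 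Two small remarks. First, you rightly make explicit hypotheses that the paper leaves buried in its unspecified coefficient class $\cK(\Omega;\bbR)$ --- bounded lower-order coefficients and uniformly continuous top-order coefficients --- without which the estimate can fail; this is a needed clarification, not a defect. Second, your closing sentence about the ordering of choices is slightly backwards as written: the threshold for $\eta$ is dictated only by the ellipticity constant $\nu$ (through the constant-coefficient constant) and a combinatorial constant, both independent of the partition, so one fixes $\eta$ first, then takes the partition fine enough for that $\eta$, and only afterwards chooses the interpolation parameter $\delta$, which is allowed to depend on the number of patches and on the $C^{2m}$ norms of the cutoffs. The body of your proof implements exactly this order, so the slip is cosmetic rather than a gap.
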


\begin{lemma}\label{LEM: SECTOR}
Let $\cL$ be a strongly elliptic operator of order $n = 2m$, then there exists constants $C, R > 0$ and $\theta\in(0, \frac{\pi}{2})$ that
\begin{equation}
    \|u\|_{L^2(\Omega)} \le \frac{C}{|z|} \|(z + \cL) u\|_{L^2(\Omega)}
\end{equation}
for all $u\in H^{2m}(\Omega)$ and $z\in\bbC$ satisfying $|z|\ge R$ and $\theta - \pi<\arg z<\pi - \theta$.
\end{lemma}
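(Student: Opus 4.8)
The statement is the standard sectorial resolvent bound for a strongly elliptic operator, and the natural route is through the numerical range of the associated sesquilinear form together with a rotation argument. The plan is to work in $L^2(\Omega)$ with the form $a(u,v) := \langle \cL u, v\rangle$ and to show first that, after a fixed shift, the numerical range $\{a(u,u)/\norm{u}_{L^2}^2\}$ is trapped in a sector $\Sigma_{\theta_0}$ of half-angle $\theta_0 < \pi/2$; the resolvent bound for $z$ in the complementary sector then follows by choosing a single unimodular factor that makes the real part of $e^{i\omega}(z+\cL)u$ paired with $u$ comparable to $\abs{z}\norm{u}^2$.

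First I would treat the principal part. Since the coefficients $p_\alpha$ are bounded and strong ellipticity gives $\operatorname{Re}\left((-1)^m\sum_{|\alpha|=2m}p_\alpha(x)\xi^\alpha\right) \ge \nu\abs{\xi}^{2m}$ while $\abs{\sum_{|\alpha|=2m}p_\alpha(x)\xi^\alpha}\le C\abs{\xi}^{2m}$, the principal symbol lies in the fixed sector $\{\,w:\operatorname{Re} w\ge (\nu/C)\abs{w}\,\}$, i.e. $\abs{\arg w}\le\theta_0:=\arccos(\nu/C)<\pi/2$. For constant coefficients this transfers verbatim to $a(u,u)$ by Plancherel on $\bbT^d$; for variable coefficients I would localize with a partition of unity, freeze coefficients on each patch, and control the commutators and the lower-order terms ($\abs{\alpha}<2m$) by the interpolation inequality $\norm{u}_{H^j}\le \eta\norm{u}_{H^m}+C_\eta\norm{u}_{L^2}$. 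This should yield the two G\aa rding-type bounds $\operatorname{Re} a(u,u)\ge c_0\norm{u}_{H^m}^2-\lambda_0\norm{u}_{L^2}^2$ and $\abs{\operatorname{Im} a(u,u)}\le (\tan\theta_0)\operatorname{Re} a(u,u)+\lambda_1\norm{u}_{L^2}^2$, so that $a(u,u)+\lambda\norm{u}_{L^2}^2\in\Sigma_{\theta_0}$ once $\lambda$ is large enough.

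Next comes the rotation and absorption. Fix $\theta\in(\theta_0,\pi/2)$ and let $z$ satisfy $\theta-\pi<\arg z<\pi-\theta$. Pairing $(z+\cL)u$ with $u$ gives $z\norm{u}^2 + a(u,u)=\langle (z+\cL)u,u\rangle$. Because $z$ avoids the sector of half-angle $\theta$ about the negative real axis while $a(u,u)+\lambda\norm{u}^2$ sits in $\Sigma_{\theta_0}$, there is an angle $\omega=\omega(\arg z)$ with $\operatorname{Re}(e^{i\omega}z)\ge c\abs{z}$ and $\operatorname{Re}\!\big(e^{i\omega}(a(u,u)+\lambda\norm{u}^2)\big)\ge 0$ simultaneously, the gap $c=c(\theta,\theta_0)>0$ being uniform since the two sectors are disjoint by a positive angle. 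Taking real parts and using Cauchy--Schwarz yields $c\abs{z}\norm{u}^2\le \operatorname{Re}\big(e^{i\omega}\langle (z+\cL)u,u\rangle\big)+\lambda\norm{u}^2\le \norm{(z+\cL)u}\,\norm{u}+\lambda\norm{u}^2$. For $\abs{z}\ge R:=2\lambda/c$ the $\lambda$-term is absorbed after dividing by $\norm{u}$, leaving $\tfrac{c}{2}\abs{z}\norm{u}\le\norm{(z+\cL)u}$, which is the claim with $C=2/c$.

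The main obstacle is the variable-coefficient part of the first step: transferring the pointwise sector of the symbol into a genuine sector for the global form requires controlling the errors produced by freezing coefficients and by the subprincipal terms. These are exactly of lower order, so the interpolation inequality (or, if one prefers, the a priori bound of Lemma~\ref{LEM: CLOESD}) handles them; the delicate point is keeping $\theta_0$ strictly below $\pi/2$ throughout, so that a genuine opening of angle $\theta-\theta_0>0$ remains available for the rotation in the second step.
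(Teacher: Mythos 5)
The paper never actually proves this lemma: it is imported as a classical fact with a citation to Agmon and Pazy, so your proposal is being compared against a citation rather than an internal argument. Your proof is correct and is, in substance, the classical argument behind that citation: G\aa rding's inequality places the (shifted) numerical range of the form $a(u,u)=\langle \cL u,u\rangle$ in a sector $\Sigma_{\theta_0}$ with $\theta_0<\pi/2$, and the rotation-and-absorption step converts sectoriality of the numerical range into the resolvent bound on the complementary region. The geometry of your second step checks out: for $|\arg z|<\pi-\theta$ with $\theta>\theta_0$ one can always choose $|\omega|\le \pi/2-\theta_0$ so that $|\arg z+\omega|\le \pi/2-(\theta-\theta_0)$, which gives the uniform gap $c=\sin(\theta-\theta_0)$, and taking $R=2\lambda/c$ absorbs the G\aa rding shift, yielding $\|u\|\le (2/c)|z|^{-1}\|(z+\cL)u\|$. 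Two caveats, neither of which is a genuine gap. First, your claim that the form inherits the symbol's angle $\arccos(\nu/C)$ is optimistic---coefficient freezing and the sub-principal terms degrade the angle, and indeed with the paper's real-valued coefficients the principal symbol is actually real and positive, so the nonzero angle of the form comes entirely from those error terms---but this is immaterial: the argument only needs \emph{some} $\theta_0<\pi/2$, which follows from G\aa rding together with the crude bound $|\operatorname{Im} a(u,u)|\le |a(u,u)|\le C_1\|u\|_{H^m(\Omega)}^2$. Second, both that bound and G\aa rding's inequality for the non-divergence form $\langle \cL u,u\rangle$ require moving $m$ derivatives onto $\bar u$ by parts, hence smoothness (say $C^m$) of the top-order coefficients; this matches the standing hypotheses of the references the paper cites, but since the paper leaves the coefficient class $\cK(\Omega;\bbR)$ unspecified, you should state it explicitly. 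What your route buys over the paper's citation is a self-contained proof that works verbatim on the torus (no boundary terms) and makes the dependence of $\theta$, $R$, $C$ on the ellipticity constant $\nu$ and the coefficient bounds explicit.
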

From Lemma~\ref{LEM: CLOESD}, $\cL$ is a closed operator and because the domain of $\cL$ includes all $C^{\infty}(\Omega)$ functions, therefore $\cL$ is also densely defined. The following is a direct corollary of Lemma~\ref{LEM: SECTOR}. 
\begin{corollary}
Let $\cL$ be a strongly elliptic operator of order $n = 2m$, then there exists a positive constant $\mu>0$ such that $\cL+\mu$ is admissible.
\end{corollary}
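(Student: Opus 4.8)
The plan is to deduce the admissibility of $\cA=\cL+\mu$ directly from the sectorial estimate of Lemma~\ref{LEM: SECTOR} by a shift-and-rotate bookkeeping, choosing $\mu$ large enough to absorb the exceptional bounded region. First I would rewrite the resolvent of $\cA$ in terms of the quantity controlled by Lemma~\ref{LEM: SECTOR}: setting $w=\mu-z$, one has $z-\cA=z-\cL-\mu=-(w+\cL)$, so that $(z-\cA)^{-1}=-(w+\cL)^{-1}$ and $\|(z-\cA)^{-1}\|_{L^2\to L^2}=\|(w+\cL)^{-1}\|_{L^2\to L^2}$. Thus it suffices to exhibit a suitable $\delta$ so that, for all $z\notin\Sigma_{\delta}$, the point $w=\mu-z$ falls into the region where Lemma~\ref{LEM: SECTOR} applies, and then to convert the resulting bound $C/|w|$ into the required form $M/(1+|z|)$.

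Next I would match the two sector geometries. The hypothesis $\theta-\pi<\arg w<\pi-\theta$ of Lemma~\ref{LEM: SECTOR} says precisely that $-w=z-\mu$ avoids the closed cone $\Sigma_{\theta}$ around the positive real axis, i.e. $z\notin\mu+\Sigma_{\theta}$. I would take $\delta=\theta\in(0,\tfrac{\pi}{2})$. An elementary estimate shows $\mu+\Sigma_{\theta}\subseteq\Sigma_{\theta}$ for every $\mu\ge 0$: writing a point as $\mu+\zeta$ with $|\arg\zeta|\le\theta$, both $\mu$ and $\zeta$ have nonnegative real part, and $|\operatorname{Im}(\mu+\zeta)|/\operatorname{Re}(\mu+\zeta)\le |\zeta|\sin\theta/(\mu+|\zeta|\cos\theta)\le\tan\theta$, so $|\arg(\mu+\zeta)|\le\theta$. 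Hence $z\notin\Sigma_{\delta}$ forces $z\notin\mu+\Sigma_{\theta}$, which is exactly the angular condition for $w$. To secure the magnitude condition $|w|=|\mu-z|\ge R$, I would take $\mu\ge R/\sin\theta$, so that the disk $\{|z-\mu|\le R\}$ (whose center lies at distance $\mu\sin\theta$ from each boundary ray of $\Sigma_{\theta}$) is entirely contained in $\Sigma_{\theta}=\Sigma_{\delta}$; consequently no $z\notin\Sigma_{\delta}$ can satisfy $|z-\mu|\le R$. With this choice of $\mu$, every $z\notin\Sigma_{\delta}$ lands in the admissible region for $w$.

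The one genuinely non-routine step, which I regard as the main obstacle, is upgrading the a priori lower bound $\|u\|\le(C/|w|)\|(w+\cL)u\|$ of Lemma~\ref{LEM: SECTOR} into an actual resolvent bound, since a lower bound by itself yields only injectivity and closed range, not surjectivity. Here I would use that $\cL$ is closed and densely defined (as established in the remarks preceding the statement) and that, on the torus, it has compact resolvent via the Rellich embedding $H^{2m}(\Omega)\hookrightarrow L^2(\Omega)$ together with Lemma~\ref{LEM: CLOESD}; hence its spectrum is discrete. The estimate then rules out any eigenvalue $-w$ in the region, since an eigenfunction would force $\|u\|=0$, so every such $w$ lies in the resolvent set and $\|(w+\cL)^{-1}\|\le C/|w|$ follows by applying the estimate to $u=(w+\cL)^{-1}f$. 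Finally I would convert the bound: on $\bbC\setminus\Sigma_{\delta}$ one has $|\mu-z|\ge c\,(1+|z|)$ for some $c>0$, treating $|z|\ge 2\mu$ via $|z-\mu|\ge|z|-\mu\ge|z|/2$ and the bounded range $|z|<2\mu$ via $|z-\mu|>R$, whence $\|(z-\cA)^{-1}\|=\|(w+\cL)^{-1}\|\le C/|\mu-z|\le M/(1+|z|)$ with $M=C/c$. This establishes admissibility of $\cL+\mu$ with $\delta=\theta$.
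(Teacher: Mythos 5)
Your geometric reduction is exactly what the paper means by calling this a ``direct corollary'' of Lemma~\ref{LEM: SECTOR} --- the paper offers no further proof --- and that part of your argument is correct and complete: the substitution $w=\mu-z$, the inclusion $\mu+\Sigma_{\theta}\subseteq\Sigma_{\theta}$, the choice $\mu\ge R/\sin\theta$ so that the disk $\{|z-\mu|\le R\}$ lies inside $\Sigma_{\theta}$, and the final conversion $|\mu-z|\ge c\,(1+|z|)$ off the sector are all sound. You also correctly identify the point the paper silently glosses over: admissibility requires $(z-\cA)^{-1}$ to exist as a bounded operator on all of $L^2(\Omega)$, whereas Lemma~\ref{LEM: SECTOR} only supplies an a priori estimate, hence injectivity and closed range.

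However, your resolution of that point is circular. Compact resolvent does not follow from Rellich plus Lemma~\ref{LEM: CLOESD} alone: those facts show that \emph{if} some $z_0$ lies in $\rho(\cL)$, then $(z_0-\cL)^{-1}$ is bounded into $H^{2m}(\Omega)$ (closed graph theorem) and hence compact into $L^2(\Omega)$; they do not produce such a $z_0$, and producing one is precisely the surjectivity question you set out to settle. The hypothesis is not vacuous: $d/dx$ on $L^2(0,1)$ with domain $H^1(0,1)$ is closed, densely defined, and has compactly embedded domain, yet every $\lambda\in\bbC$ is an eigenvalue, so its resolvent set is empty. Standard ways to close the gap: (i) apply the estimate of Lemma~\ref{LEM: SECTOR} also to the formal adjoint $\cL^{\ast}$, which is again strongly elliptic of order $2m$ because the coefficients are real; injectivity of $\bar{w}+\cL^{\ast}$ gives dense range of $w+\cL$, which combined with closed range gives surjectivity --- this is the classical Agmon/Pazy argument; or (ii) first obtain a single resolvent point for $\cL$ via G\aa rding's inequality and Lax--Milgram at a large real $\lambda$, after which your compact-resolvent/discrete-spectrum argument runs exactly as written; or (iii) cite discreteness of the spectrum from the literature, as the paper itself does (Browder, Agmon) in the paragraph immediately following the corollary.
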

Since the parameters $\{ p_{\alpha} \}_{|\alpha|=0}^n$ are real-valued, then the strongly elliptic operator 
$\cL$ permits a decomposition $\cL = \cL_0 + \cB$ such that $\cL_0$ is a self-adjoint operator of order $n$ and $\cB$ is a differential operator of order $<n$, therefore the spectrum of $\cL$ is discrete, there are only a finite number of eigenvalues outside the sector $\Sigma_{\delta}$ and {the eigenfunctions may form a complete basis in $L^2(\Omega)$ under certain circumstances~\cite{ browder1953eigenfunctions, agmon1962eigenfunctions}. Let $\mu > 0$ such that $\cL_{\mu}:=\cL + \mu$ is admissible, in the following we assume the initial condition $u_0\in L^2(\Omega)$ can be represented by eigenfunctions}
\begin{equation}
    u_0(x) = \sum_{k=1}^{\infty} c_k \phi_k(x), 
\end{equation}
where $\{\lambda_k, \phi_k\}_{k\ge 1}$ are the eigenpairs of $\cL_{\mu}$ sorted by $\Re \lambda_k$ in ascending order including multiplicity. Furthermore, the eigenvalues $\{ \lambda_k \}_{k\ge 1}$ satisfy the growth rate $\Re\lambda_k = \cO(k^{\beta})$ with $\beta=n/d$~\cite{gaarding1953asymptotic}.
 
\begin{lemma}\label{LEM: REP}
Let the eigenpairs of ${\cL}_{\mu}$ be $\{\lambda_k, \phi_k\}_{k\ge 1}$, then the solution of~\eqref{EQ: MODEL} can be represented by 
\begin{equation}\label{EQ: EXPAND}
    u(x, t) = e^{\mu t}\sum_{k=1}^{\infty} c_k e^{-\lambda_k t} \phi_k(x),
\end{equation}
where the coefficients $\{c_k\}_{k=1}^{\infty}$ satisfy $    u_0(x) = \sum_{k=1}^{\infty} c_k \phi_k(x)$. 
\end{lemma}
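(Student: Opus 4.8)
The plan is to remove the spectral shift, pass to the admissible operator $\cL_\mu$, and then act with its analytic semigroup termwise on the eigenfunction expansion of the initial datum. First I would gauge out the shift by the substitution $v(x,t) := e^{-\mu t} u(x,t)$. Since $\cL = \cL_\mu - \mu$, a direct computation gives $\partial_t v = -\mu v + e^{-\mu t}\partial_t u = -\mu v - e^{-\mu t}\cL u$, and inserting $\cL u = (\cL_\mu - \mu)u$ produces a $+\mu v$ term that cancels the leading $-\mu v$, so that $v$ solves the homogeneous Cauchy problem $\partial_t v = -\cL_\mu v$ with the same initial condition $v(\cdot,0)=u_0$. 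It therefore suffices to establish the representation $v(x,t) = \sum_{k\ge 1} c_k e^{-\lambda_k t}\phi_k(x)$, after which multiplying by $e^{\mu t}$ recovers \eqref{EQ: EXPAND}.

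Next I would invoke admissibility. Because $\cL_\mu$ is admissible, the Dunford--Cauchy integral defines a bounded analytic semigroup $\{e^{-\cL_\mu t}\}_{t\ge 0}$ on $L^2(\Omega)$, and the unique solution of the reduced problem is $v(t)=e^{-\cL_\mu t}u_0$. On the one-dimensional eigenspace spanned by $\phi_k$ the generator acts as multiplication by $\lambda_k$, so the semigroup restricts to the scalar flow $e^{-\cL_\mu t}\phi_k = e^{-\lambda_k t}\phi_k$; equivalently this follows from the residue calculus applied to $(z-\cL_\mu)^{-1}\phi_k = (z-\lambda_k)^{-1}\phi_k$ inside the contour integral. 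This is the elementary building block of the expansion.

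The core of the argument is then to pass the bounded operator $e^{-\cL_\mu t}$ through the convergent series $u_0 = \sum_k c_k\phi_k$. Writing the partial sums $S_M := \sum_{k=1}^{M} c_k\phi_k$, the standing hypothesis $u_0 = \sum_k c_k\phi_k$ guarantees $S_M \to u_0$ in $L^2(\Omega)$; since $e^{-\cL_\mu t}$ is bounded on $L^2(\Omega)$, continuity gives $e^{-\cL_\mu t}S_M \to e^{-\cL_\mu t}u_0 = v(t)$, while linearity and the eigenrelation give $e^{-\cL_\mu t}S_M = \sum_{k=1}^{M} c_k e^{-\lambda_k t}\phi_k$. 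Hence $\sum_k c_k e^{-\lambda_k t}\phi_k$ converges in $L^2(\Omega)$ to $v(t)$ for every $t\ge 0$, which is exactly the desired representation; at $t=0$ it reduces to the prescribed expansion of $u_0$.

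I expect the delicate point to be the convergence and the termwise semigroup action in the \emph{non-self-adjoint} setting. Because $\cL$ splits as $\cL_0 + \cB$ with $\cL_0$ self-adjoint of order $n$ and $\cB$ of lower order, the eigenfunctions $\{\phi_k\}$ are complete but need not be orthonormal, so the coefficients $c_k$ are determined through the biorthogonal dual system and the $L^2$ convergence of $\sum_k c_k\phi_k$ is precisely what must be taken as the hypothesis of the lemma. Granting that convergence, the smoothing factor $e^{-\Re\lambda_k t}$ with $\Re\lambda_k = \cO(k^{\beta})$, $\beta=n/d$, makes the series for $t>0$ converge far faster than the initial expansion and legitimizes termwise differentiation in $t$ and application of $\cL$ in $x$, so that the representation genuinely solves \eqref{EQ: MODEL}; uniqueness for the analytic-semigroup solution then closes the argument.
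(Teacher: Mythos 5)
The paper states Lemma~\ref{LEM: REP} without any proof, treating it as a direct consequence of the preceding setup (admissibility of $\cL_{\mu}$, the Dunford--Cauchy representation of $e^{-\cL_{\mu}t}$, and completeness of the eigenfunctions). Your argument is correct and is essentially the justification the paper leaves implicit: the gauge substitution $v=e^{-\mu t}u$, the eigenrelation $e^{-\cL_{\mu}t}\phi_k=e^{-\lambda_k t}\phi_k$ obtained from $(z-\cL_{\mu})^{-1}\phi_k=(z-\lambda_k)^{-1}\phi_k$, and passing the bounded semigroup through the partial sums $S_M\to u_0$ are all sound, and you correctly identify that in the non-self-adjoint setting the $L^2$ convergence of $\sum_k c_k\phi_k$ must be read as a hypothesis of the lemma rather than a consequence of mere completeness.
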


\begin{theorem}\label{THM: RANK}
Suppose $\cL_{\mu}$ is admissible that the spectrum sits in the interior of the sector $\Sigma_{\delta}$ and  the  coefficients in~\eqref{EQ: EXPAND} decay as $|c_k|\le \theta k^{-\gamma}$, $\theta > 0, \gamma > 1$, then there exists a linear space $V\subset L^2(\Omega)$ of dimension $C_{\cL}(\kappa) |\log \eps|^2$ such that 
\begin{equation}
    \| u(\cdot, t) - P_{V} u(\cdot, t) \| \le C \eps \|u_0\|, \; \forall t\in [0,T].
\end{equation}
where $P_V$ is the projection operator onto $V$ and $C=C(\theta, \gamma)$. The constant $C_{\cL}(\kappa)$ is chosen from Corollary~\ref{COR: DIM} and $\kappa=\cO(\beta/(\gamma-1))$.
\end{theorem}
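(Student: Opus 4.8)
The plan is to exploit the smoothing of the elliptic semigroup for times bounded away from zero, where a rational (resolvent) approximation of $e^{-\cL_\mu t}$ collapses the whole trajectory into a space of dimension $O(|\log\eps|^2)$, and then to treat the remaining short-time window $[0,t_0]$ separately using the algebraic decay of the spectral coefficients. Since $u(\cdot,t)=e^{\mu t}e^{-\cL_\mu t}u_0$ by Lemma~\ref{LEM: REP} and $e^{\mu t}\le e^{\mu T}$ on $[0,T]$, it suffices to approximate the family $\{e^{-\cL_\mu t}u_0\}$ and to absorb the scalar $e^{\mu t}$ into the final constant. I fix $t_0=\eps^{\kappa}$ and split $[0,T]=[0,t_0]\cup[t_0,T]$.

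For the large-time piece $t\in[t_0,T]$ I apply Corollary~\ref{COR: DIM} to $\cA=\cL_\mu$, obtaining a quadrature operator $\cA_N(t)=\sum_{j=-N}^N c_j e^{-z_j t}(z_j-\cL_\mu)^{-1}$ with $N=C_\cL(\kappa)|\log\eps|^2$ and $\|e^{-\cL_\mu t}-\cA_N(t)\|_{L^2\to L^2}\le\eps$. The key observation is that, for every $t$, the vector $\cA_N(t)u_0$ lies in the fixed space $V_1:=\mathrm{span}\{(z_j-\cL_\mu)^{-1}u_0:-N\le j\le N\}$, whose dimension is at most $2N+1=O(|\log\eps|^2)$. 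Hence on $[t_0,T]$ one has $\|u(\cdot,t)-P_{V_1}u(\cdot,t)\|\le e^{\mu T}\|e^{-\cL_\mu t}u_0-\cA_N(t)u_0\|\le e^{\mu T}\eps\|u_0\|$.

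For the short-time piece $t\in[0,t_0]$ I split the spectral expansion at a cutoff $K$ into $u_0^{\mathrm{low}}=\sum_{k\le K}c_k\phi_k$ and $u_0^{\mathrm{high}}=\sum_{k>K}c_k\phi_k$. Because the spectrum lies in the interior of the sector, $\Re\lambda_k>0$, so by the triangle inequality (with the eigenfunctions normalized) the high part stays negligible for \emph{all} $t\ge0$: $\|e^{-\cL_\mu t}u_0^{\mathrm{high}}\|\le\sum_{k>K}|c_k|\le\theta\sum_{k>K}k^{-\gamma}\le\frac{\theta}{\gamma-1}K^{1-\gamma}$. This is exactly where $\gamma>1$ enters, giving a summable tail without any appeal to orthonormality of the possibly non-self-adjoint eigenbasis; requiring the bound to be $\le\eps\|u_0\|$ forces $K\sim\eps^{-1/(\gamma-1)}$. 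For the low part I Taylor-expand, $e^{-\cL_\mu t}u_0^{\mathrm{low}}=\sum_{j=0}^{J-1}\frac{(-t)^j}{j!}\cL_\mu^j u_0^{\mathrm{low}}+R_J(t)$, so that $e^{-\cL_\mu t}u_0^{\mathrm{low}}$ lies $\eps$-close to the Krylov space $V_2:=\mathrm{span}\{\cL_\mu^j u_0^{\mathrm{low}}:0\le j<J\}$ of dimension only $J$. The remainder obeys $\|R_J(t)\|\lesssim\frac{(|\lambda_K|t_0)^J}{J!}\|u_0\|$, so once $|\lambda_K|t_0\le\tfrac12$ we obtain $\|R_J(t)\|\le\eps\|u_0\|$ with $J=O(|\log\eps|)$. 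Since $|\lambda_k|\sim k^{\beta}$, the requirement $|\lambda_K|t_0\lesssim1$ with $K\sim\eps^{-1/(\gamma-1)}$ and $t_0=\eps^{\kappa}$ reads $\eps^{\kappa-\beta/(\gamma-1)}\lesssim1$, which is precisely what dictates the choice $\kappa=\cO(\beta/(\gamma-1))$.

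Finally I take $V=V_1+V_2$, of dimension $(2N+1)+J=O(|\log\eps|^2)+O(|\log\eps|)=C_\cL(\kappa)|\log\eps|^2$. As $P_V$ is at least as accurate as projection onto either summand, the large-time estimate covers $[t_0,T]$ and the low/high decomposition covers $[0,t_0]$ (at $t=0$ only the high-mode tail survives), giving $\|u(\cdot,t)-P_Vu(\cdot,t)\|\le C\eps\|u_0\|$ with $C=C(\theta,\gamma)$ and a harmless factor $e^{\mu T}$ for all $t\in[0,T]$. The main obstacle — and the place where a naive argument fails — is the short-time window: directly truncating the eigenfunction expansion would cost $K\sim\eps^{-1/(\gamma-1)}$ dimensions, which is polynomial rather than logarithmic; the essential idea is that over the tiny interval $[0,t_0]$ the low-frequency flow is captured by a \emph{logarithmic}-dimensional Krylov/Taylor space, while the high frequencies are suppressed purely by coefficient decay. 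The one calculation that needs care is the Taylor remainder for complex $\lambda_k$ in the sector, ensuring that $|e^{-\lambda_k t}-\sum_{j<J}(-\lambda_k t)^j/j!|$ indeed stays $O((|\lambda_K|t_0)^J/J!)$ on the low modes.
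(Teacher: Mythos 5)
Your proposal is correct and follows essentially the same route as the paper's proof: split $[0,T]$ at $t_0=\eps^{\kappa}$, use the resolvent-quadrature space from Corollary~\ref{COR: DIM} on $[t_0,T]$, and on $[0,t_0]$ truncate the spectral expansion at $K\sim\eps^{-1/(\gamma-1)}$ (using $\gamma>1$ and the sector condition for the tail) together with an $O(|\log\eps|)$-term Taylor expansion of the low modes. Your Krylov space $\mathrm{span}\{\cL_\mu^j u_0^{\mathrm{low}}:0\le j<J\}$ coincides with the paper's span of the Taylor-coefficient functions $\sum_{k\le M_\eps}c_k(-1)^l\frac{(\lambda_k-\mu)^l}{l!}\phi_k$, just expressed in a different polynomial basis.
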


\begin{proof}
Without loss of generality, we assume that $\|u_0\|=1$. Let $u_M$ be the truncated series from Lemma~\ref{LEM: REP},
\begin{equation}
    u_M(x, t) = e^{\mu t} \sum_{k=1}^{M} c_k e^{-\lambda_k t} \phi_k(x)\,,
\end{equation}
then $$\|u(\cdot, t) - u_M(\cdot, t) \|_{L^2(\Omega)}\le\theta e^{\mu t} { \frac{M^{1-\gamma}}{\gamma - 1} }e^{-\lambda_{M+1} t }\le \theta e^{\mu t}{ \frac{M^{1-\gamma}}{\gamma - 1} }e^{-\lambda_{M} t }.$$ 
Let $q\in\bbN$ such that $\Re \lambda_q > \mu$, we denote $M_{\eps} = q + \eps^{1/(1-\gamma)}$,  $L = |\log\eps|$ and define the following approximation
\begin{equation}
\begin{aligned}
       w_{\eps}(x, t) &= \sum_{k=1}^{M_{\eps}} c_k \sum_{l=0}^{L} (-1)^l \frac{( \lambda_k-\mu)^l t^l}{l!} \phi_k(x) \\
       &=\sum_{l=0}^{L}  t^l \left( \sum_{k=1}^{M_{\eps}} c_k (-1)^{l}  \frac{(\lambda_k-\mu)^l }{l!} \phi_k(x)\right)\,,
\end{aligned}
\end{equation}
then for each $t$, $w_{\eps}$ sits in the linear space $$V_1=\text{span}\left\{ \sum_{k=1}^{M_{\eps}} c_k (-1)^{l}  \frac{ (\lambda_k-\mu)^l }{l!} \phi_k(x), \; l=0,\dots, L\right\}.$$ 
For $t\in [0,  \frac{1}{C_{\delta}\Re\lambda_{M_{\eps}}}]$, $C_{\delta} = 1+\tan|\delta|$ since the spectrum is included in the sector region, we have  
\begin{equation}
\begin{aligned}
|\lambda_k  - \mu| t &\le \frac{\sqrt{|\Re \lambda_k-\mu|^2 + |\Im\lambda_k|^2}}{C_{\delta}\Re \lambda_{M_{\eps}}}\le \frac{\sqrt{|\Re \lambda_k-\mu|^2 + \tan^2|\delta||\Re\lambda_k|^2}}{C_{\delta}\Re \lambda_{M_{\eps}}}\\
&\le \frac{\sqrt{|\Re \lambda_{M_{\eps}}|^2 + \tan^2|\delta||\Re\lambda_{M_{\eps}}|^2}}{C_{\delta}\Re \lambda_{M_{\eps}}} < 1
\end{aligned}
\end{equation}
 for $ k=1,\dots, M_{\eps} $, then by the error estimate of Taylor expansion on $[0,   \frac{1}{(1+\delta)\Re\lambda_{M_{\eps}}}]$,
\begin{equation}
\begin{aligned}
       \|u_{M_{\eps}}(x, t) - w_{\eps}(x, t)\| &= \| \sum_{k=1}^{M_{\eps}} c_k \phi_k(x) \sum_{l=L+1}^{\infty} (-1)^l \frac{(\lambda_k-\mu)^l t^l}{l!}   \| \\
       &\le \left( \sum_{k=1}^{M_{\eps}} |c_k| \right) \frac{1}{(L+1)!} \\
       &\le {\frac{ \theta }{\gamma - 1}}e^{-(L+1)} \le \frac{ \theta }{\gamma - 1} \eps \,.
\end{aligned}
\end{equation}
which implies 
\begin{equation}
\begin{aligned}
        \|u(x, t) - P_{V_1}u(x, t)\|&\le \|u(x, t) - u_{M_{\eps}}(x, t)\| + \|u_{M_{\eps}}(x, t) - w_{\eps}(x, t)\| \\
        &\le \frac{2\theta }{\gamma - 1}\eps.
\end{aligned}
\end{equation}
On the interval $[ \frac{1}{(1+\delta)\Re\lambda_{M_{\eps}}},T]$, since $\Re \lambda_{M_{\eps}} = \cO(M_{\eps}^{\beta})$, we take $\kappa = \log(\lambda_{M_{\eps}})/|\log\eps| = \cO(\beta / (\gamma - 1))$, then by the Corollary~\ref{COR: DIM}, there exists a linear space $V_2$ of dimension $C_{\cL}(\kappa)|\log\eps|^2$, i.e.,  spanned by $(z_k-\cA)^{-1}u_0, k=1, \ldots, N=C_{\cL}(\kappa)|\log\eps|^2$, such that 
\begin{equation}
    \|u(x, t) - P_{V_2}u(x, t)\|\le\eps,
\end{equation}
Now we define $V = V_1\cup V_2$, then $\dim V\le \dim V_1+\dim V_2 = \cO(|\log\eps|^2)$ and 
\begin{equation}
     \|u(x, t) - P_{V}u(x, t)\| \le \left(2\theta \frac{1}{\gamma - 1} + 1\right)\eps, \quad \forall t\in [0, T].
\end{equation}
\end{proof}
\begin{remark}
If the elliptic operator $\cL_{\mu}$ in Theorem~\ref{THM: RANK} has eigenfunctions that form an orthonormal basis, e.g., a self-adjoint elliptic operator, the statement is still true for $|c_k|\le \theta k^{-\gamma}$ for some $\theta>0, \gamma>\frac{1}{2}$. 
\end{remark}
The following Lemma~\ref{le:multi-trajectories} shows that, for a self-adjoint elliptic operator $\cL$, even if multiple trajectories are available, the data space stays $\eps$ close to the space spanned by the first $\cO((\tau^{-1}\log|\eps|)^{d/n})$ eigenfunctions of $\cL$ after certain $\tau >0$. This poses two difficulties for the direct operator approximation approach. First, unless diverse initial data $u(x,0)$ can be used and the corresponding solution $u(x,t)$ can be observed at $t\ll 1$, an accurate approximation of the mapping is not possible. Second, although all solution trajectories stay close to a low dimensional space spanned by a few leading eigenfunctions of $\cL$, it is not known \emph{a priori} unless in the special case of constant coefficients and simple geometry.
  
\begin{lemma}
\label{le:multi-trajectories}
If $\cL$ is a self-adjoint strongly elliptic operator, then there exists a linear space $V\subset L^2(\Omega) $ such that for any solution data $u(x, t)$ to the equation 
\begin{equation}
    \partial_t u = -\cL u
\end{equation}
with initial condition $u_0\in L^2(\Omega)$, 
\begin{equation}
  \min_{f\in V}\| f(x) - u(x, t) \|_{L^2(\Omega)}\le \eps \|u_0\|_{L^2(\Omega)},\quad\forall t\in [\tau, T]
\end{equation}
where $\dim V = \cO((\tau^{-1}\log|\eps|)^{d/n})$.
\end{lemma}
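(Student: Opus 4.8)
The plan is to take $V$ to be the span of all eigenfunctions of $\cL$ whose eigenvalues lie below a suitable cutoff, and to absorb every higher mode into the projection error using the exponential decay of $e^{-\lambda_k t}$ for $t\ge\tau>0$. Since $\cL$ is self-adjoint and strongly elliptic, its eigenfunctions $\{\phi_k\}_{k\ge1}$ form an orthonormal basis of $L^2(\Omega)$, the eigenvalues $\lambda_k$ are real and bounded below, and the counting function obeys the Weyl asymptotics $N(\Lambda):=\#\{k:\lambda_k\le\Lambda\}=\cO(\Lambda^{d/n})$ (the two-sided form of the growth rate $\lambda_k=\cO(k^{n/d})$ quoted above). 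Writing $u_0=\sum_k c_k\phi_k$, Lemma~\ref{LEM: REP} gives $u(\cdot,t)=\sum_k c_k e^{-\lambda_k t}\phi_k$, and Parseval yields $\sum_k|c_k|^2=\|u_0\|^2$.

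First I would fix the cutoff $\Lambda:=\tau^{-1}|\log\eps|$ and set $V=\mathrm{span}\{\phi_k:\lambda_k\le\Lambda\}$, with $P_V$ the orthogonal projection onto $V$. By orthonormality,
\begin{equation*}
\|u(\cdot,t)-P_V u(\cdot,t)\|^2=\sum_{k:\,\lambda_k>\Lambda}|c_k|^2 e^{-2\lambda_k t}.
\end{equation*}
Every index in this sum has $\lambda_k>\Lambda>0$, so for $t\ge\tau$ we get $e^{-2\lambda_k t}\le e^{-2\lambda_k\tau}<e^{-2\Lambda\tau}=\eps^2$; hence the right-hand side is bounded by $\eps^2\sum_k|c_k|^2=\eps^2\|u_0\|^2$, uniformly in $t\in[\tau,T]$. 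Taking square roots gives the desired estimate $\|u(\cdot,t)-P_V u(\cdot,t)\|\le\eps\|u_0\|$.

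It then remains only to read off the dimension: $\dim V=N(\Lambda)=\cO(\Lambda^{d/n})=\cO((\tau^{-1}\log|\eps|)^{d/n})$, as claimed. I do not anticipate a genuine obstacle, since the whole argument reduces to a one-sided tail estimate enabled by self-adjointness (orthonormality of the $\phi_k$) together with parabolic smoothing away from $t=0$. The two points deserving care are, first, that placing \emph{all} modes with $\lambda_k\le\Lambda$ into $V$ automatically captures the finitely many non-positive eigenvalues guaranteed by strong ellipticity, so no separate bookkeeping is needed; and second --- this is what makes the statement genuinely multi-trajectory --- that the same space $V$ serves every initial datum, because $u_0$ enters the bound only through $\sum_k|c_k|^2=\|u_0\|^2$. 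The only quantitative input beyond these observations is the Weyl counting estimate $N(\Lambda)=\cO(\Lambda^{d/n})$.
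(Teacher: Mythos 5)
Your proposal is correct and follows essentially the same route as the paper: take $V$ to be the span of the eigenfunctions of $\cL$ with eigenvalues below a cutoff of order $\tau^{-1}|\log\eps|$, use orthonormality (Parseval) to reduce the projection error to a tail sum killed by the factor $e^{-\lambda_k \tau}$, and invoke the eigenvalue growth $\lambda_k \sim k^{n/d}$ to count $\dim V$. The only cosmetic differences are that you parameterize $V$ by an eigenvalue cutoff rather than by the index $M$ of the first omitted mode, and you handle the finitely many non-positive eigenvalues directly instead of carrying the shift $\mu$ from $\cL_\mu$ as the paper does.
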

\begin{proof}
Let $\phi_1, \phi_2, \dots$ be the eigenfunctions of $\cL$, which forms an orthonormal basis for $L^2(\Omega)$, with eigenvalues $\lambda_1, \lambda_2, \dots$ and $V = \text{span} \{\phi_1, \dots, \phi_M\}$ is the subspace formed by the first $M$ eigenfunctions of $\cL$. For each $t\in[\tau, T]$,
\begin{equation}
\begin{aligned}
       \min_{f\in V} \|f(x) - u(x, t)\|_{L^2(\Omega)} &\le \|e^{\mu t}\sum_{k=M+1}^{\infty} c_k e^{-\lambda_k t}\phi_k(x)\|_{L^2(\Omega)} \\&\le e^{(-\lambda_{M+1}+\mu)\tau} \|u_0\|_{L^2(\Omega)}.
\end{aligned}
\end{equation}
We select $M$ such that $e^{(-\lambda_{M+1}+\mu)\tau}\le\eps$, then $\lambda_{M+1} \ge \frac{|\log\eps|}{\tau} + \mu$. From the growth rate that $\lambda_k\sim \cO(k^{n/d})$, we see $M = \cO((\tau^{-1}|\log \eps|)^{d/n}) $ would suffice.
\end{proof}

\subsection{Hyperbolic PDE}
Next, we show that the behavior of solution trajectory for hyperbolic PDEs can be quite different. The data space spanned by snapshots of a single solution trajectory depends on the regularity of the initial data and can be quite rich. 
We use the following first-order hyperbolic PDE defined on a torus $x\in\Omega=[0,2\pi]^d$ with the periodic condition and $t\in[0, T]$ as an example 
\begin{equation}\label{EQ: HYPERBOLIC}
\begin{array}{l}
    \partial_t u(x, t) + \bc(x) \cdot \nabla u(x,t) = 0
    \\
    u(x,0)=u_0(x).
    \end{array}
\end{equation}
Define the following two correlation functions in space and time of a solution trajectory,
\begin{equation}
\label{eq:correlation}
\begin{aligned}
 K(x,y) &:= \int_0^T u(x,s) u(y, s) ds,    && (x,y)\in \Omega\times \Omega, \\
 G(s,t) &:= \int_{\Omega} u(x,t) u(x, s) dx,    && (s,t)\in [0,T]\times [0,T],
\end{aligned}
\end{equation}
where $K(x,y)$ and  $G(s,t)$ define two symmetric semi-positive compact integral operators on $L^2(\Omega)$ and $L^2[0,T]$ respectively. They have the same non-negative eigenvalues $\lambda_1\ge \lambda_2 \ge \ldots \ge\lambda_j\ge\ldots$ with $\lambda_j\to 0$ as $j\to\infty$. Their normalized eigenfunctions form an orthonormal basis in $L^2(\Omega)$ and $L^2[0, T]$. Define $V_K^k$ and $V_G^k$ to be the linear space spanned by their $k$ leading eigenfunctions of $K(x,y)$ and  $G(s,t)$ respectively, which provides the best $k$-dimensional linear spaces that approximate the family of functions $u(\cdot,t)$ (in $L^2(\Omega)$) and $u(x, \cdot)$ (in $L^2([0,T])$). We have
\begin{equation}
\label{eq:approx}
    \int_0^T\|u(\cdot,t)-P_{V_K^k}u(\cdot,t)\|^2_{L^2(\Omega)}dt=\int_{\Omega}\|u(x,\cdot)-P_{V_G^k}u(x,\cdot)\|^2_{L^2[0,T]} dx =\sum_{j=k+1}^{\infty}\lambda_j,
\end{equation}
where $P_{V_K^k}$ and $P_{V_G^k}$ denotes the projection operator to $V_K^k\subset L^2(\Omega)$ and $V_G^k\subset L^2[0,T]$. 

Below we show an upper bound for the dimension of the best linear subspace in $L^2(\Omega)$ that can approximate all snapshots of a single trajectory to $\eps$ tolerance in $L^2(\Omega\times[0,T])$. 
\begin{lemma}\label{LEM: REGULAR}
Let $\bc(x)\in C^p(\Omega)$ be a velocity field and  $u_0 \in C^p(\Omega)$, then there exists a subspace $V\subset L^2(\Omega)$ of dimension $o(\eps^{-2/p})$ that 
\begin{equation}
  \sqrt{ \int_0^T \|P_V u(\cdot, t) - u(\cdot ,t)\|^2_{L^2(\Omega)} dt} \le \eps 
\end{equation}
\end{lemma}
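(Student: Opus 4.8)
The plan is to exhibit $V$ explicitly by exploiting smoothness of the trajectory in \emph{time}, which is the natural resource here since the target norm $\sqrt{\int_0^T\|\cdot\|_{L^2(\Omega)}^2\,dt}$ averages over $t$. I will build $V$ from a few time-Taylor coefficients of $u$ at finitely many time nodes and control the error by the Taylor remainder. This bypasses the spectral identity~\eqref{eq:approx} (which would instead identify the optimal $V$ as the leading eigenspace $V_K^k$, but optimality is not needed, since the lemma only asserts existence). The one structural input required is that, although the equation is first order, the solution is as smooth in $t$ as the data allow: differentiating the PDE and using that $\bc$ is time-independent gives $\partial_t^l u=(-1)^l(\bc\cdot\nabla)^l u$, so each time derivative is a fixed spatial differential operator applied to the current snapshot.

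The main step, and the only real obstacle I expect, is a regularity-propagation estimate: I will show $u(x,\cdot)\in C^p([0,T])$ with $\sup_{t\in[0,T]}\|\partial_t^l u(\cdot,t)\|_{L^2(\Omega)}\le C_l$ uniformly in $t$, for $0\le l\le p$. Since $\bc\in C^p$, its flow $\Phi_t$ is a $C^p$ diffeomorphism of $\bbT^d$, and the solution is the pullback $u(\cdot,t)=u_0\circ\Phi_{-t}$; composing with $u_0\in C^p$ keeps $u(\cdot,t)\in C^p(\Omega)$, with norm controlled on $[0,T]$ by a Gr\"onwall argument. The delicate bookkeeping is counting how many derivatives survive: the identity $\partial_t^l u=(-1)^l(\bc\cdot\nabla)^l u$ uses $l$ spatial derivatives of $u$ and $l-1$ of $\bc$, so the hypotheses $u_0,\bc\in C^p$ are exactly what is needed to reach $l=p$. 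Everything after this step is routine.

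With the time regularity in hand, I partition $[0,T]$ into $m=\lceil T/h\rceil$ subintervals with nodes $t_j$ and, on each, replace $u(\cdot,t)$ by its degree-$(p-1)$ Taylor polynomial in $t$ about $t_j$. Its values lie in $W_j:=\mathrm{span}\{\partial_t^l u(\cdot,t_j):0\le l\le p-1\}$, with $\dim W_j\le p$, so $V:=\sum_j W_j$ satisfies $\dim V\le pm$. The Taylor remainder gives $\|u(\cdot,t)-P_V u(\cdot,t)\|_{L^2(\Omega)}\le \frac{h^p}{p!}\sup_\tau\|\partial_t^p u(\cdot,\tau)\|_{L^2(\Omega)}\lesssim h^p$ on each subinterval, hence $\int_0^T\|u(\cdot,t)-P_V u(\cdot,t)\|_{L^2(\Omega)}^2\,dt\lesssim T\,h^{2p}$. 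Choosing $h\sim\eps^{1/p}$ makes the right-hand side $\le\eps^2$, and then $\dim V\le pm=\cO(h^{-1})=\cO(\eps^{-1/p})$, which lies within the claimed $\cO(\eps^{-2/p})$ (one may pad $V$ to the stated size without increasing the error).

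Two remarks on the bound. First, the construction is insensitive to the spatial dimension $d$, because the snapshots trace a $p$-smooth one-parameter curve in $L^2(\Omega)$ and it is the time-smoothness, not the spatial one, that is being cashed in; this is why no $d$ appears in the statement. Second, the exponent obtained, $\eps^{-1/p}$, is in fact sharper than the stated $\eps^{-2/p}$, which is consistent with the rapid decay of the eigenvalues $\lambda_j$ forced by the mean-square $C^p$-in-time smoothness through~\eqref{eq:approx}; if one prefers to argue via~\eqref{eq:approx} directly, bounding $\sum_{j>k}\lambda_j$ by the error of the explicit space above yields the same conclusion.
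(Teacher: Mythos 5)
Your proof is correct, and it takes a genuinely different route from the paper's. The paper stays inside the correlation framework of \eqref{eq:correlation}--\eqref{eq:approx}: using characteristics to write $u(x,t)=u_0(Z(t;x))$, it shows the time-correlation kernel $G(s,t)$ is $C^p([0,T]^2)$ and then cites Reade's decay theorem for eigenvalues of smooth symmetric positive semidefinite kernels, $\lambda_n=o(n^{-(p+1)})$, so that $\sum_{j>k}\lambda_j=o(k^{-p})$ and \eqref{eq:approx} yields a space of dimension $\cO(\eps^{-2/p})$. You use the flow only to propagate regularity --- $u(\cdot,t)=u_0\circ Z(t;\cdot)$ stays in $C^p$, so $\partial_t^l u=(-1)^l(\bc\cdot\nabla)^l u$ is uniformly bounded for $l\le p$ --- and then build $V$ explicitly from piecewise time-Taylor polynomials; your derivative counting ($l$ spatial derivatives of $u$, $l-1$ of $\bc$) is right, and the $L^2$-valued Taylor remainder argument is sound. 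What your route buys: it is self-contained (no kernel eigenvalue asymptotics needed), the bookkeeping is independent of $d$, and it gives the strictly sharper bound $\cO(\eps^{-1/p})$, which implies the stated $\cO(\eps^{-2/p})$ and is consistent with the lower bound $\cO(\eps^{-1/(p+1/2)})$ in Remark~\ref{re:example}. The sharpening is genuine rather than a slip: your construction implicitly exploits that $G$ is the Gram kernel of a $C^p$ curve in $L^2(\Omega)$, i.e., mixed smoothness in $(s,t)$, which is more structure than the bare $C^p$ regularity the paper feeds into Reade's theorem. What the paper's route buys instead is identification of the optimal subspace (the leading eigenspace, the Kolmogorov-width viewpoint), which it reuses for the lower-bound example; your $V$ is explicit but makes no optimality claim --- which, as you correctly note, the lemma does not require. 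In a final write-up you should only spell out the two standard facts you lean on: that the flow of a $C^p$ vector field on $\bbT^d$ is jointly $C^p$ (so the induction defining $\partial_t^l u$ is legitimate up to $l=p$), and the integral form of the Taylor remainder for the $L^2(\Omega)$-valued curve $t\mapsto u(\cdot,t)$.
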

\begin{proof}
From \eqref{eq:approx}, the linear space spanned by the leading $k$ eigenfunctions of the compact operator induced by the kernel function $K(x,y)$ is the best approximation of the family of functions $u(\cdot,t)$ and satisfies
\begin{equation}
    \int_0^T\|u(\cdot,t)-P_{V_K^k}u(\cdot,t)\|^2_{L^2(\Omega)}dt =\sum_{j=k+1}^{\infty}\lambda_j,
\end{equation}
where $\lambda_j$ is the eigenvalues of the compact operator induced by kernel function $K(x,y)$ or $G(s,t)$ defined in \eqref{eq:correlation}.

Let $Z(t; x)$ solve the ODE
\begin{equation}\label{EQ: FLOW}
    \dot{Z}(t; x) = -\bc(Z(t; x)),\quad Z(0; x) = x.
\end{equation}
Since $\bc(x)\in C^p(\Omega)$, then the solution $Z(t; x)\in C^p[0, T]$, 
The solution to the hyperbolic PDE~\eqref{EQ: HYPERBOLIC} is $u(x,t) = u_0(Z(t;x))$, therefore the correlation
\begin{equation}
   G(s, t) := \int_{\Omega} u(x,t) u(x, s) dx = \int_{\Omega} u_0(Z(s; x)) u_0(Z(t; x)) dx.
\end{equation}
Since $G(s,t)\in C^p([0,T]^2)$, its eigenvalue decay follows $\lambda_n = o(n^{-(p+1)})$~\cite{reade1983eigenvalues,reade1984eigenvalues}. Therefore
\begin{equation}
    \sum_{j=k+1}^{\infty} \lambda_j = o(k^{-p}). 
\end{equation}
which completes the proof.

\end{proof}

\begin{remark}
\label{re:compact}
For hyperbolic PDE, the trajectory of a solution corresponding to initial data with less regularity contains more information about the underlying differential operator. For example, when the initial data $u_0(x)$ with $\|u_0\|_2=1$ has a compact support of size $h$. As the support size $h$ goes to zero, the correlation between two snapshots at two times separated by $\cO(h)$ is zero assuming the magnitude of the velocity field $\bc(x)$ is $\cO(1)$. Hence the dimension of the data space spanned by a single trajectory with any fixed tolerance is at least $\cO(h^{-1})$. On the other hand, if $\bc(x)$ is analytic, the dimension of the data space spanned by a single trajectory corresponding to an analytic initial data with tolerance $\eps$ would become ${\cO(|\log\eps|^{d})}$.
However, the low regularity of the solution $u(x,t)$ can cause large discretization errors in the numerical approximation of derivatives of the solution which leads to poor PDE identification. If initial condition design is part of the PDE learning process, it should be a  function with its spatial and Fourier support as large as possible while being resolved by the measurement and computation resolution.
\end{remark}

\begin{remark}
\label{re:example}
Now we use an example to give a lower bound for the dimension  of the best linear subspace in $L^2(\Omega)$ that can approximate all snapshots of a single trajectory to $\eps$ tolerance. Let $\bc(x)$ be the unit vector parallel to $x_1$ axis, $T=2\pi$, $u_0(x) = \sum_{n=1}^{\infty} \sqrt{a_n} \cos n x_1$ with $0<a_n = o(n^{-2(p+1)})$, then $\partial_{x_1}^p u_0\in C(\Omega)$, $u(x,t)=u_0(x_1-t)$. Assume $s>t>0$,
\begin{equation}
\begin{aligned}
      G(s,t)=\int_{\Omega} u(x,t)u(x,s) dx&= (2\pi)^{d-1}\sum_{n=1}^{\infty}\int_{0}^{2\pi} a_n \cos n x_1 \cos n(x_1 - |s-t|) dx_1 \\
      &= \frac{1}{2}(2\pi)^{d} \sum_{n=1}^{\infty} a_n \cos(n (s-t) )\,.
\end{aligned}
\end{equation}
The eigenvalues of $G(s,t)$ on interval $[0,2\pi]$ is $\frac{\pi}{2}(2\pi)^{d} a_n$ with multiplicity of two. Hence we have 
\begin{equation}
    \int_0^{2\pi}\|u(\cdot,t)-P_{V_K^k}u(\cdot,t)\|^2_{L^2(\Omega)}dt=\sum_{j=k+1}^{\infty}\lambda_j =o(k^{-2p-1})\,.
\end{equation}
Since $\max_{0\le t \le 2\pi}\|u(\cdot,t)-P_{V_K^k}u(\cdot,t)\|_{L^2(\Omega)}\ge \sqrt{\frac{1}{2\pi}\int_0^{2\pi}\|u(\cdot,t)-P_{V_K^k}u(\cdot,t)\|^2_{L^2(\Omega)}dt}$, 
we see that the dimension of the best linear subspace in $L^2(\Omega)$ that can approximate all snapshots of a single trajectory with this chosen initial condition $u_0$ to $\eps$ tolerance has to be of order $ \cO(\eps^{-1/(p+\frac{1}{2})})$. 
\end{remark}

\begin{remark}
For a hyperbolic operator with multiple trajectories, unlike the case for elliptic operator stated by Lemma~\ref{le:multi-trajectories},  the solution data space on an interval $[0, T]$ is as rich as the solution data space on $[\tau, T+\tau]$ due to the diffeomorphism induced by the flow $X$ in~\eqref{EQ: FLOW}.
\end{remark}

The above study shows two possible challenges for a DOA approach in practice: 1) limited data space to train the approximation if the underlying differential operator is compressive or smoothing, such as an elliptic operator, or 2) a large number of parameters and a large amount of data as well as an expensive training process are required to approximate a 
differential operator, such as a hyperbolic operator,  with rich trajectory dynamics.

\subsection{Numerical examples}\label{SEC: DIM}
Here we use a few numerical examples to corroborate our analysis above of the dimension estimates of the space spanned by all snapshots along a single solution trajectory for different types of PDEs. 

First, we show how the dimension of the data space corresponding to a single solution trajectory depends on the PDE operator.

\begin{enumerate}
    \item [I.] Transport equation.\begin{equation}\label{EQ: TRANS EX}
\begin{aligned}
       &u_t(x,t) = 4u_{x}(x,t)\;,\quad(x,t)\in[-8,8)\times (0,5]\\ 
&u(-8,t) = u(8,t)\;,\quad t\in (0, 5]\\
&u(x,0) =\begin{cases}\exp(-\frac{1}{1-x^2})\,&\quad x\in(-1,1)\\0\;,&\text{otherwise.}
\end{cases}
\end{aligned}
\end{equation}
\item [II.] Heat equation. \begin{equation}\label{EQ: DIFF EX}
\begin{aligned}
&u_t(x,t) = 4u_{xx}(x,t)\;,\quad(x,t)\in[-8,8)\times (0,5]\\
&u(-8,t) = u(8,t),\quad u_x(-8, t) = u_x(8, t)\;,\quad t\in (0, 5]\\
&u(x,0) =\begin{cases}\exp(-\frac{1}{1-x^2})\,&\quad x\in(-1,1)\\0\;,&\text{otherwise.}
\end{cases}
\end{aligned}
\end{equation}
\end{enumerate}

Taking the maximal time $T=5$, we split the observation into two phases: the early phase $t\in [0, 2.5)$ and the late phase $t\in(2.5, 5]$. Within each phase, we compute the singular values of the solution matrix $u_{jk}=u(x_j, t_k)$ sampled on the space-time grid with grid size $\Delta x = 16/500$ and $\Delta t = 5/5000$.  In Figure~\ref{FIG: DIMENSION}, (a) shows the singular value distribution of the solution space for the transport equation~\eqref{EQ: TRANS EX}; and (b) shows that for the heat equation~\eqref{EQ: DIFF EX}. We see that the data space spanned by a single solution trajectory of a hyperbolic operator has a significantly larger dimension than that of a parabolic operator. Also, we see that the dimension of the data space at a later time interval does not decrease for a hyperbolic PDE, whereas it decreases considerably for a parabolic PDE.
 
\begin{figure}[!htb]
    \centering
    \begin{tabular}{cc}
    \includegraphics[width = 0.45\textwidth]{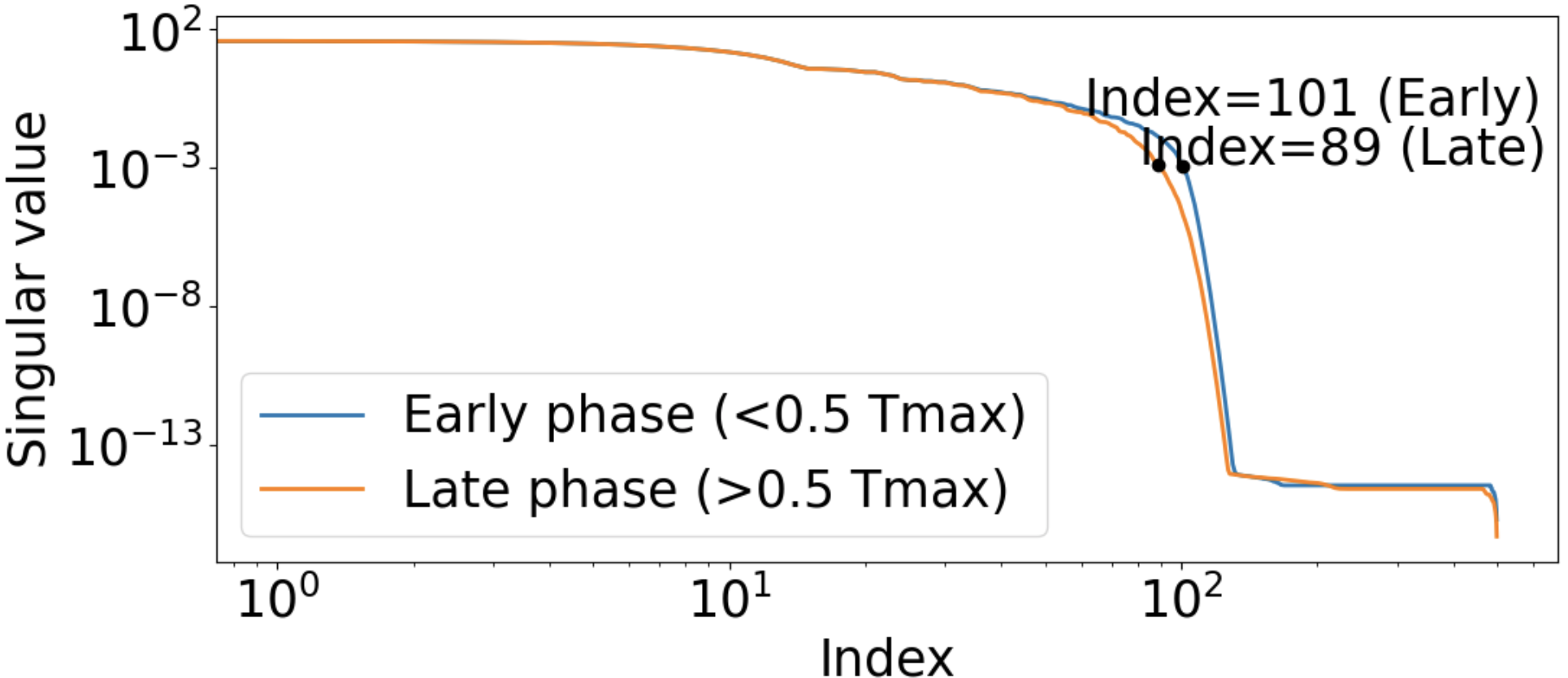}&
    \includegraphics[width = 0.45\textwidth]{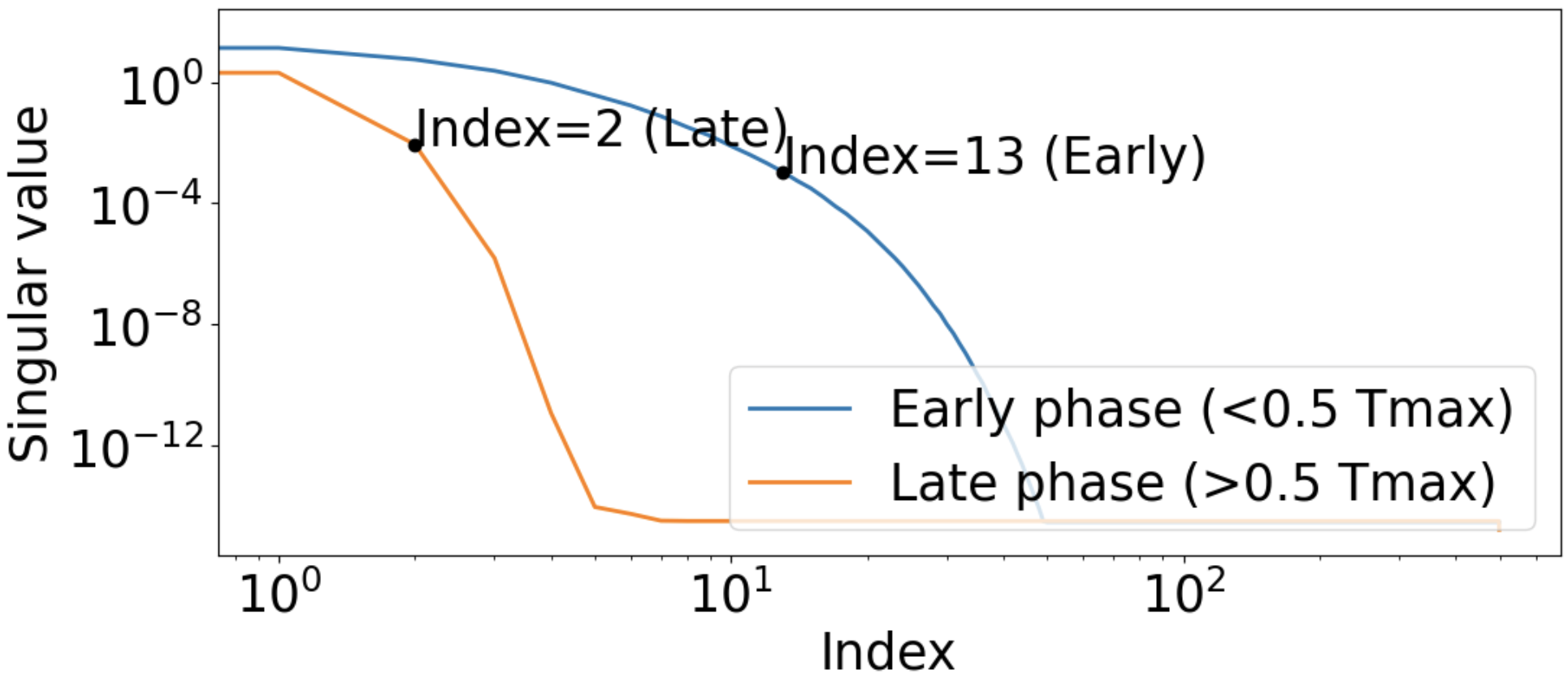}\\
    \qquad\; (a)&\qquad\; (b)
    \end{tabular}
    \caption{ Singular value distribution of the solution matrix $u_{jk}=u(x_j,t_k)$ restricted to the first half and the second half of the time interval of (a) the transport equation~\eqref{EQ: TRANS EX}; (b) the diffusion equation~\eqref{EQ: DIFF EX}. The indices annotated in the figures indicate the numbers of singular values that are greater than $10^{-3}$. In both cases, $T_{\max}=5$ is the maximal observation time.}
    \label{FIG: DIMENSION}
\end{figure}

Now we test the following PDEs with variable coefficients over a  time-space grid with widths $\Delta x = 16/500$ and $\Delta t = 5/5000$. 
\begin{enumerate}
    \item [I'.] Transport equation.\begin{equation}\label{EQ: TRANS EX2}
\begin{aligned}
       &u_t(x,t) = (2+\cos(\frac{\pi x}{8}))u_{x}(x,t)\;,\quad(x,t)\in[-8,8)\times (0,5]\\ 
&u(-8,t) = u(8,t)\;,\quad t\in (0, 5]\\
&u(x,0) =\sin(\frac{\pi x}{8}).
\end{aligned}
\end{equation}
\item [II'.] Heat equation. \begin{equation}\label{EQ: DIFF EX2}
\begin{aligned}
&u_t(x,t) = (2+\cos(\frac{\pi x}{8}))u_{xx}(x,t)\;,\quad(x,t)\in[-8,8)\times (0,5]\\
&u(-8,t) = u(8,t),\quad u_x(-8, t) = u_x(8, t)\;,\quad t\in (0, 5]\\
&u(x,0) =\sin(\frac{\pi x}{8}).
\end{aligned}
\end{equation}
\end{enumerate}

\begin{figure}[!htb]
    \centering
    \begin{tabular}{cc}
    \includegraphics[width = 0.45\textwidth]{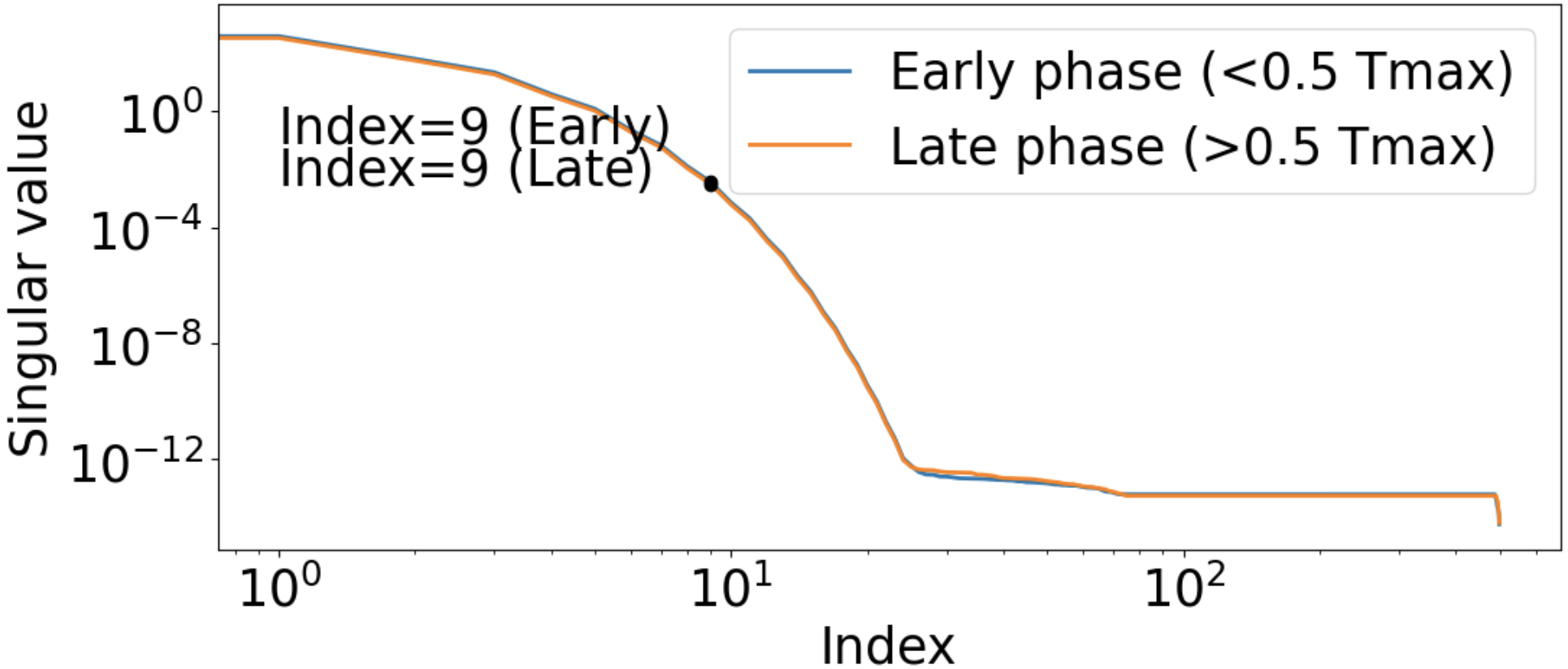}&
    \includegraphics[width = 0.45\textwidth]{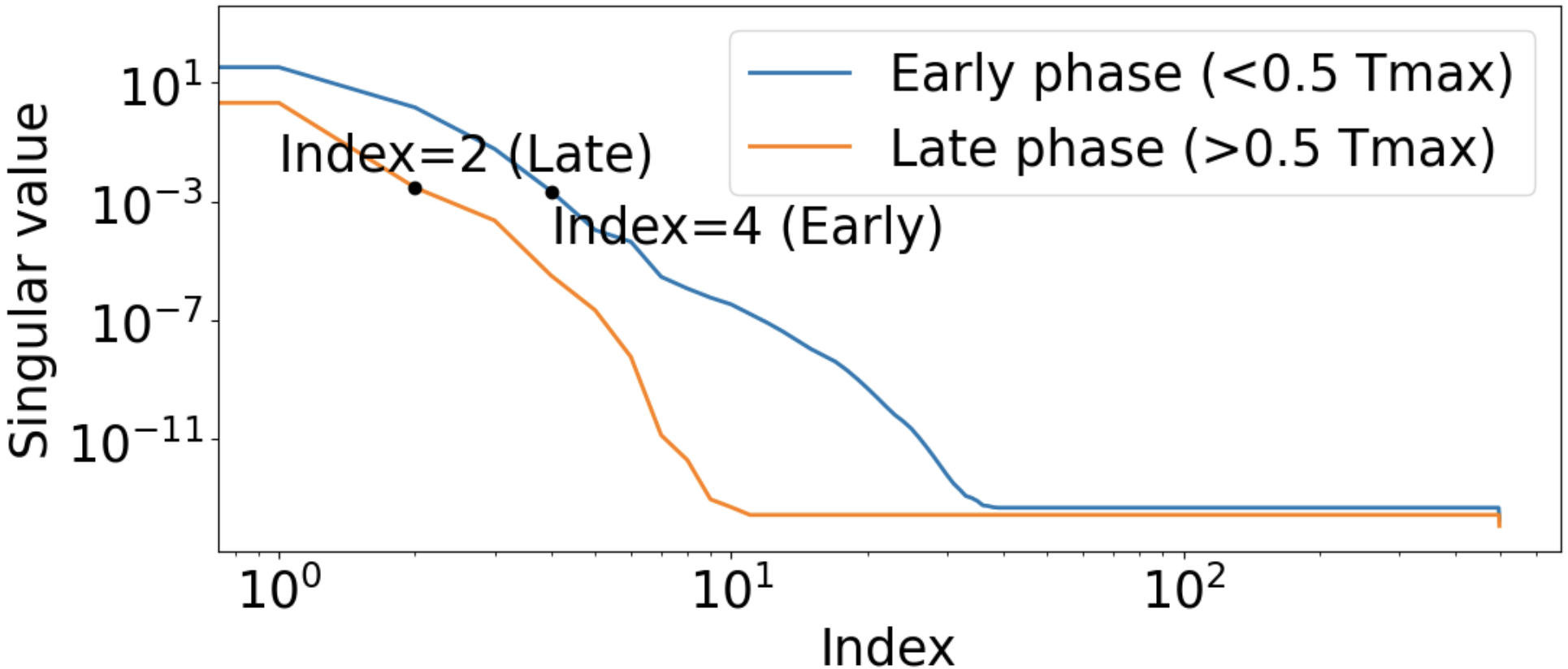}\\
    \qquad\; (a)&\qquad\; (b)
    \end{tabular}
    \caption{Singular value distribution of the solution matrix $u_{jk}=u(x_j,t_k)$ restricted to the first half and the second half of the time interval of (a) the transport equation~\eqref{EQ: TRANS EX2}; (b) the diffusion equation~\eqref{EQ: DIFF EX2}. In both cases, $T_{\max}=5$ is the maximal observation time.}
    \label{FIG: DIMENSION2}
\end{figure}

The results are shown in Figure~\ref{FIG: DIMENSION2}. For the transport equation with constant coefficients, since the initial condition contains only a single sinusoidal mode, the whole solution trajectory contains this single mode with a phase shift and hence lives in a two-dimensional space. For the heat equation with constant coefficients, the whole solution trajectory contains this single mode with a decaying magnitude, which lives in a one-dimensional space. However, the variable coefficients "pump" various modes into the solution trajectory. We see the dimension of the data space of a single solution trajectory behaves similarly to the counterpart with constant coefficients except that the variation of the coefficient in the diffusion equation keeps pumping in modes into the solution and slows down the decay of the singular values in a later time interval.

Finally, we show how the dimension of the solution data space of  a single solution trajectory depends on the initial data. We use two types of initial data. One is initial data with compact support with different regularities. The other one is using initial data that contain a different number of Fourier modes with random amplitude for each mode. For initial data with different regularity, we consider 
\begin{equation}
\label{eq:initial}
u_{\text{square}}(x,0)=\begin{cases} 1,\quad &x\in[-4,0],\\
-1,\quad & x\in(0,4],\\
0,\quad&\text{otherwise}
\end{cases}
\end{equation}
and  $u_{\text{hat}}(x, 0) = \cG (u_{\text{square}}(x, 0))$, $ u_{\text{int}}(x, 0) = \cG(u_{\text{hat}}(x, 0))$, where $\cG$ is the mapping:
\begin{equation}\label{EQ: MAPPING}
\cG f (x):= 
 \int_{-8}^{x} \tilde{f}(s) ds,\quad \tilde{f}(x) = \begin{cases}
 f(2x+4)\quad& x\in [-4, 0],\\
 -f(-2x-4)\quad& x\in (0, 4],\\
 0\quad& \text{otherwise}.
 \end{cases}
\end{equation}
And for the random initial data, we consider 
\begin{align}
u(x,0) = a_0+\sqrt{2}\sum_{j=1}^{M}\left(a_j\cos\left(\frac{\pi j x}{L}\right)+b_j\sin\left(\frac{\pi j x}{L}\right)\right), \quad x\in [-8,8).
\label{EQ: RANDOMFUNC}
\end{align}
Here $ M$ is the total number of Fourier modes in the initial data and the amplitudes $a_0,a_j,b_j\sim\mathcal{N}(0,1/(2M+1))$, $j=1,2,\dots, M$. 

For initial data with different regularities, we show the percentage of dominant singular values, $\lambda>\varepsilon$, for different threshold $\varepsilon>0$ in Figure~\ref{FIG: DIMENSION_COMPACT}.  Figure~\ref{FIG: DIMENSION_COMPACT} (a) shows the percentage in a log-log plot of the exact solution matrix $u_{jk}=u(x_j, t_k)$ sampled on the space-time grid for the transport equation with constant speed~\eqref{EQ: TRANS EX}. As shown by the argument in Lemma~\ref{LEM: REGULAR},  the less regular the initial data is, the faster the two solution snapshots decorrelate in time and hence the larger the space spanned by the solution trajectory for the transport equation. For the tested initial data given by~\eqref{eq:initial} and~\eqref{EQ: MAPPING}, according to the example given in Remark~\ref{re:example}, the corresponding singular values of the solution matrix decay at an algebraic rate $\lambda_n=\cO(n^{-p-1})$ with $p=0,1,2$, which is verified by the numerical result show in Figure~\ref{FIG: DIMENSION_COMPACT} (a).
 Figure~\ref{FIG: DIMENSION_COMPACT} (b) shows the percentage of dominant singular values in terms of $\log\epsilon$ for the heat equation with constant conductivity~\eqref{EQ: DIFF EX}. Due to exponential decay in time for all eigenmodes in the initial data, the singular value of the solution matrix decays very quickly for all cases, i.e., the space spanned by all snapshots along a single solution trajectory is small for the diffusion equation. As shown by Theorem~\ref{THM: RANK}, the growth can not be more than $|\log\epsilon|^2$. Actually, the numerical results suggest the growth is $c|\log\epsilon|$, where $c$ depends on the regularity of the initial data. The smoother the initial data, the smaller the $c$ is.  

\begin{figure}[!htb]
    \centering
    \begin{tabular}{cc}
    \includegraphics[width = 0.45\textwidth,height=0.25\textwidth
    ]{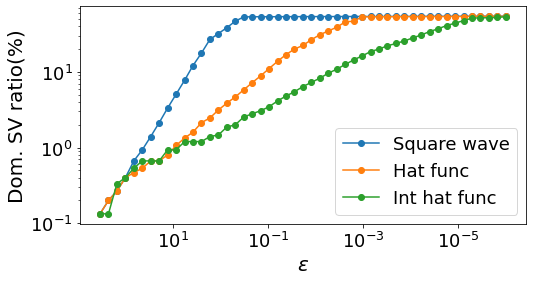}&
    \includegraphics[width = 0.45\textwidth, height=0.25\textwidth]{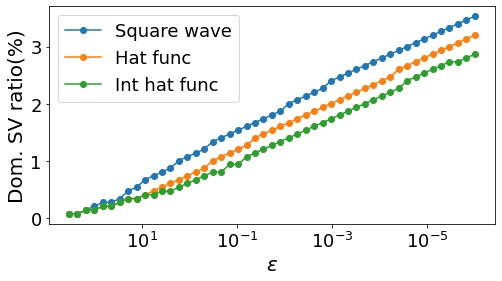}\\
    \qquad\; (a)&\qquad\; (b)
    \end{tabular}
    \caption{Singular value distribution of the solution matrix $u_{jk}=u(x_j,t_k)$ for compact supported initial data with different regularities~\eqref{eq:initial} and~\eqref{EQ: MAPPING} for (a) the transport equation~\eqref{EQ: TRANS EX} in $\log-\log$ plot; (b) the diffusion equation~\eqref{EQ: DIFF EX} in $\log\epsilon$ plot.}
    \label{FIG: DIMENSION_COMPACT}
\end{figure}

Figure~\ref{FIG: DIMENSION_RANDOM} shows the percentage of dominant singular values for solutions of~\eqref{EQ: TRANS EX},~\eqref{EQ: TRANS EX2},~\eqref{EQ: DIFF EX}, and~\eqref{EQ: DIFF EX2} with random initial data constructed as in~\eqref{EQ: RANDOMFUNC}.  We see that the more Fourier modes the initial data contains, the larger the space spanned by the solution trajectory, while the growth rate for the diffusion equation is much slower than that of the transport equation. Also, variable coefficients can introduce Fourier modes into the solution and hence increase the space spanned by the solution trajectory. The increment is more significant when initial data contain fewer Fourier modes.

\begin{figure}[!htb]
    \centering
    \begin{tabular}{cc}
    \includegraphics[width = 0.45\textwidth]{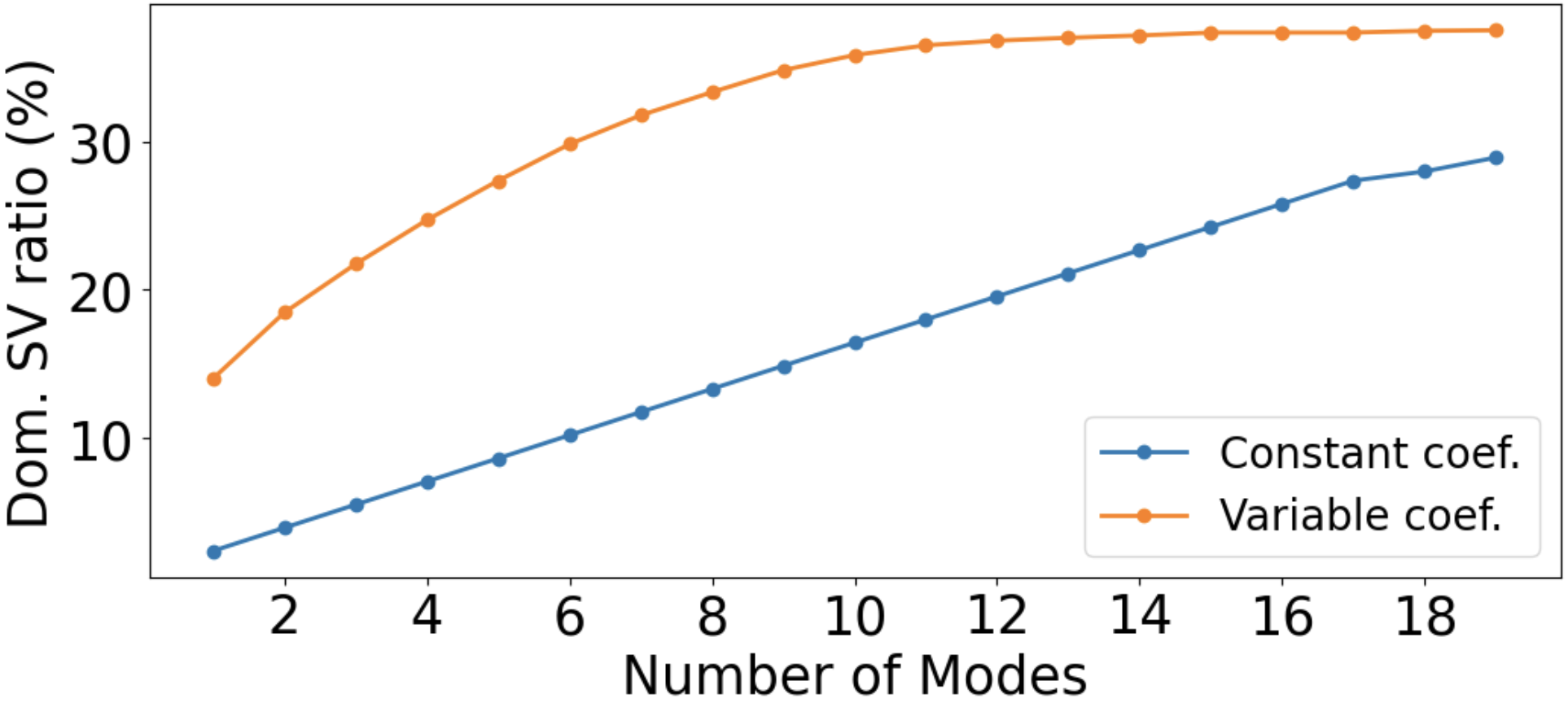}&
    \includegraphics[width = 0.45\textwidth]{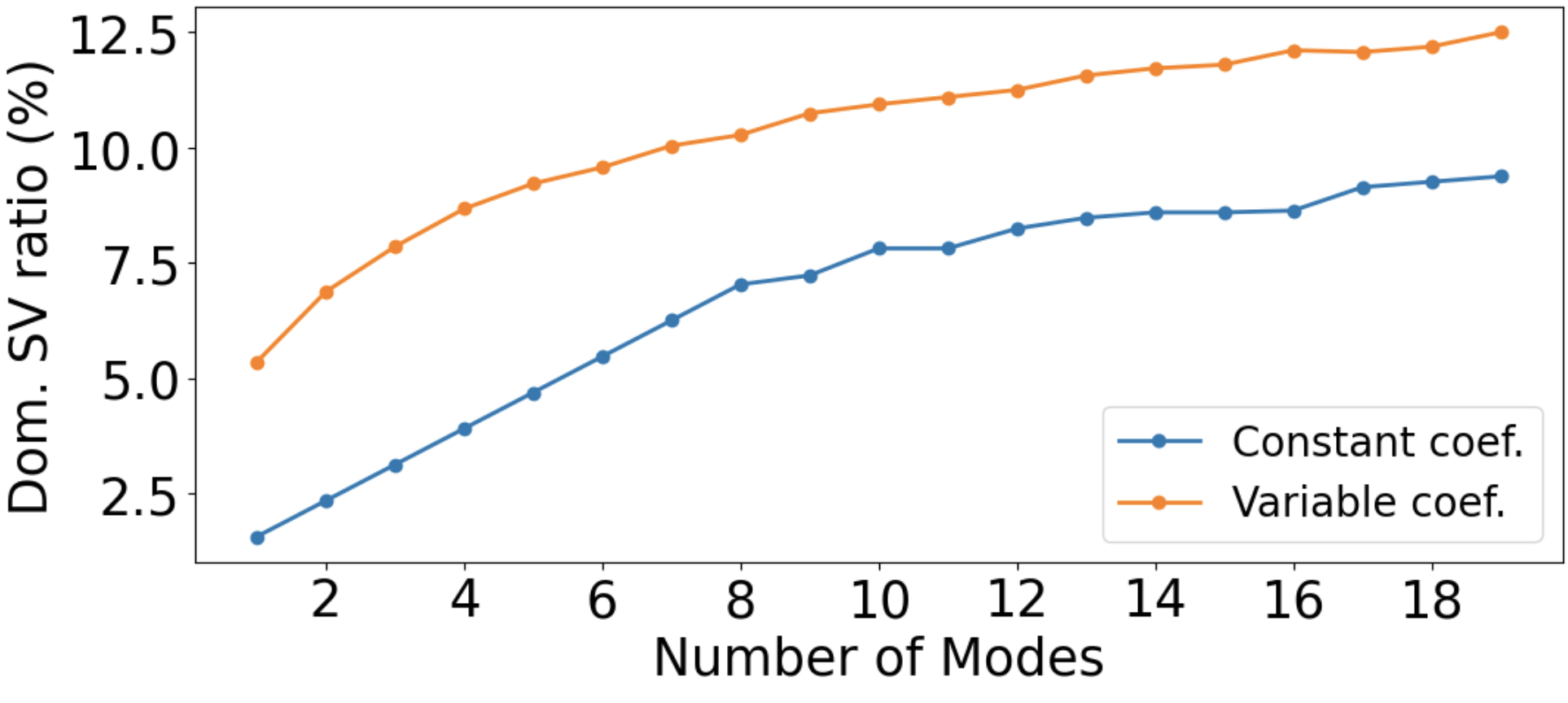}\\
    \qquad\; (a)&\qquad\; (b)
    \end{tabular}
    \caption{Singular value distribution of the solution matrix $u_{jk}=u(x_j,t_k)$ with random initial functions with a varying number of modes for  (a) transport equations (constant coef.~\eqref{EQ: TRANS EX}; variable coef.~\eqref{EQ: TRANS EX2}); (b) heat equations (constant coef.~\eqref{EQ: DIFF EX}; variable coef.~\eqref{EQ: DIFF EX2}). In both figures, the y-axes denote the percentage of dominant singular values ($\lambda_n>1\times10^{-3}$). Each plot is the average of 20 experiments. }
    \label{FIG: DIMENSION_RANDOM}
\end{figure}

\section{PDE identification from a single solution trajectory}\label{SEC: PDE ID}
In this section, we study PDE identification problem based on a  combination  of  candidates  from  a dictionary  of  basic  differential  operators and their functions using a single trajectory. We first focus on the basic question of identifiability and stability. We then propose a data-driven and data-adaptive computational model based on local regression and global consistency for PDE identification with variable coefficients. 

Identifying a differential operator $\cL$ of the form~\eqref{EQ: MODEL} from its solution data $u(x,t)$ can be formulated in the following weak form: choose a filter function $\psi(x)$ and use integration by part (in a weak sense if needed),
\begin{equation}\label{EQ: WEAK}
\begin{aligned}
      \aver{\partial_t u(x,t), \psi(x)} &= -\aver{\sum_{|\alpha|=0}^n p_{\alpha}(x) \partial^{\alpha} u(x,t), \psi(x)} \\
      &= -\aver{\sum_{|\alpha|=0}^n (-1)^{|\alpha|} \partial^{\alpha} (p_{\alpha}(x) \psi(x)), u(x, t)}\\ 
      & =-\aver{\cL^{\ast}\psi(x), u(x, t)},
\end{aligned}
\end{equation}
where $\cL^{\ast}$ is the formal adjoint of $\cL$. The left-hand-side of the weak formulation~\eqref{EQ: WEAK}, denoted by $h(t)$, can be computed from the trajectory data and identification of $\cL$ can be cast in a Galerkin formulation
\begin{equation}\label{EQ: LINEAR}
    \aver{\cL^{\ast}\psi, u(\cdot, t)} = -h(t)\,.
\end{equation}
It shows that the information of a differential operator $\cL$ is projected to the space spanned by snapshots of solution trajectory $u(\cdot, t)$ through its operation on the filter function $\psi(x)$. For example, if $\cL$ is a differential operator with constant coefficients and one chooses $\psi(x;y)=\delta(x-y)$, the PDE identification problem becomes a linear regression problem using the solution and its derivatives sampled at different locations in space and time, which has been the main study in the literature. 

\begin{remark}
Theorem~\ref{THM: RANK} shows that when $\cL$ is a strongly elliptic operator, a single trajectory of the solution stays $\eps$ close in $L^2$ norm to a linear space of dimensions of at most $\cO(|\log\eps|^2)$. It implies the eigenvalues of the compact operator induced by the correlation function between two snapshots $G(t,s)=\int_{\Omega}u(x,t)u(x,s)dx$ has at least an exponential decay as $\lambda_k=\cO(e^{-c\sqrt{k}})$. This implies that, when the Galerkin formulation~\eqref{EQ: WEAK} is discretized and many snapshots along a single trajectory are used as the test functions, the eigenvalues of the resulting linear system also has a fast decay and hence is ill-conditioned, which will affect both accuracy and stability of the identification problem.
\end{remark}

\subsection{PDE identification with constant coefficient}
For PDE identification with constant coefficients, one can transform it into the Fourier domain and show that the underlying differential operator $\cL$ can be identified by one trajectory at two different instants if and only if the solution contains enough Fourier modes.  

Defining the Fourier transform of $u$ with respect to the space variable as $$\widehat{u}(\zeta, t) = (2\pi)^{-d/2}\int_{\Omega} e^{-i\zeta\cdot x} u (x, t)\,dx,$$ the PDE $\partial_t u = \cL u$ is converted to an ODE for each frequency $\zeta\in\mathbb{Z}^d$,
\begin{align}
	\partial_t\widehat{u}(\zeta, t) = -(2\pi)^{-d/2}\sum_{|\alpha|=0 }^np_\alpha(i\zeta)^{\alpha}\widehat{u}(\zeta, t)
\end{align}
whose solution is
\begin{align}
	\widehat{u}(\zeta, t) =\widehat{u}(0,\zeta) \exp\left(-(2\pi)^{-d/2}\sum_{|\alpha|=0}^np_\alpha (i\zeta)^\alpha t\right)\;.
\end{align}
Suppose there is a $\zeta\in\mathbb{R}^d$ such that $\widehat{u}(0,\zeta) \neq 0$, then for any $t_2> t_1>0$, we have
\begin{align}\nonumber
	\frac{\widehat{u}(\zeta, t_2)}{\widehat{u}(\zeta, t_1)} =\exp\left(-(2\pi)^{-d/2}\sum_{|\alpha|~\text{even}}^n p_\alpha (i\zeta)^\alpha (t_2-t_1)\right) \exp\left(-(2\pi)^{-d/2}\sum_{|\alpha|~\text{odd}}^n p_\alpha (i\zeta)^\alpha (t_2-t_1)\right).
\end{align}
By denoting $c_\alpha = p_\alpha i^{|\alpha|}$ when $|\alpha|$ is even, and $c_\alpha = p_\alpha i^{|\alpha|-1}$ when $|\alpha|$ is odd, we associate the  $(\zeta, t)$ pair with the following decoupled system
\begin{align}
		\frac{(2\pi)^{d/2}}{t_2-t_1}\log\left(\left|\frac{\widehat{u}(\zeta, t_2)}{\widehat{u}(\zeta, t_1)}\right|\right)&=  -\sum_{|\alpha|\leq n,~|\alpha|~\text{even}}c_\alpha \zeta^\alpha\label{eq_even},\\
		\frac{(2\pi)^{d/2}}{t_2-t_1}\text{Arg}\left( \frac{\widehat{u}(\zeta, t_2)}{\widehat{u}(\zeta,t_1)}\right) &= -\sum_{|\alpha|\leq n,~|\alpha|~\text{odd}}c_\alpha \zeta^\alpha\label{eq_odd}\,.
\end{align}
It is thus clear that, given a single solution $u(x,t)$ corresponding to an initial data $u(0,t)$, the underlying constant coefficient PDE is \textit{identifiable}, i.e., there exists a unique set of parameters $p_\alpha$ such that $\partial_t u = -\cL u$ if and only if~\eqref{eq_even} and~\eqref{eq_odd} admit unique solutions for $c_\alpha$, which are coefficients of two polynomials. If $t_2-t_1>0$ is small enough, the phase ambiguity in ~\eqref{eq_odd} is removed. Hence one can apply standard polynomial regression results to this problem in the spectral domain.  

\begin{theorem}\label{THM: INTERPOLATION}
    Let $Q = \{\zeta\in\bbZ^d: \widehat{u}_0(\zeta)\neq 0\}$, then if
    $$|Q|\ge\max\left(  \sum_{k=0}^{\lfloor\frac{n}{2}\rfloor}\binom{2k+d-1}{d-1}, \sum_{k=0}^{\lfloor\frac{n-1}{2}\rfloor}\binom{2k+d}{d-1}\right)$$
    and $Q$ is not located on an algebraic polynomial hypersurface of degree $\le n$ {consists of only even order terms or odd order terms}, then the parameters $p_{\alpha}$ are uniquely determined by the solution at two instants $u(x,t_2), u(x,t_1)$ if $|t_2-t_1|$ is small enough.
\end{theorem}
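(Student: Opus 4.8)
The plan is to recognize the two decoupled systems~\eqref{eq_even} and~\eqref{eq_odd} as two independent multivariate polynomial interpolation problems, and to reduce unique identifiability to the injectivity of the associated evaluation maps. First I would observe that the substitution $c_\alpha = p_\alpha i^{|\alpha|}$ for even $|\alpha|$ and $c_\alpha = p_\alpha i^{|\alpha|-1}$ for odd $|\alpha|$ is a bijection between $\{p_\alpha\}$ and $\{c_\alpha\}$ (each factor is $\pm 1$, so real $p_\alpha$ give real $c_\alpha$); hence recovering $\cL$ is equivalent to recovering all the $c_\alpha$. Each frequency $\zeta\in Q$ supplies, through the measured ratio $\widehat{u}(\zeta,t_2)/\widehat{u}(\zeta,t_1)$, exactly one linear equation~\eqref{eq_even} for the even-order coefficients and one linear equation~\eqref{eq_odd} for the odd-order coefficients, whose coefficient matrices are generalized Vandermonde matrices with entries $\zeta^\alpha$.

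Next, let $\cP_e$ (resp.\ $\cP_o$) denote the space of real polynomials on $\bbR^d$ of degree $\le n$ built only from even-order (resp.\ odd-order) monomials. Counting multi-indices $\alpha$ with $|\alpha|=m$ via $\binom{m+d-1}{d-1}$ and summing over $m=2k$ (resp.\ $m=2k+1$) yields $\dim\cP_e=\sum_{k=0}^{\lfloor n/2\rfloor}\binom{2k+d-1}{d-1}$ and $\dim\cP_o=\sum_{k=0}^{\lfloor (n-1)/2\rfloor}\binom{2k+d}{d-1}$, the two quantities appearing in the hypothesis. The even system has a unique solution if and only if the evaluation map $\cP_e\to\bbC^{|Q|}$, $P\mapsto(P(\zeta))_{\zeta\in Q}$, is injective, and similarly for $\cP_o$. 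Injectivity fails precisely when some nonzero $P\in\cP_e$ (resp.\ $\cP_o$) vanishes at every $\zeta\in Q$, i.e.\ when $Q$ lies on the zero set of a nonzero purely-even (resp.\ purely-odd) polynomial of degree $\le n$. The stated non-degeneracy hypothesis excludes this for both parities, while the cardinality bound $|Q|\ge\max(\dim\cP_e,\dim\cP_o)$ is the necessary counting condition (an evaluation map into $\bbC^{|Q|}$ from a space of strictly larger dimension cannot be injective). Consequently both evaluation maps are injective, both systems are uniquely solvable, and the $c_\alpha$, hence the $p_\alpha$, are uniquely determined.

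The remaining technical point, which I expect to be the main obstacle, concerns the left-hand side of~\eqref{eq_odd}: the even system~\eqref{eq_even} is unambiguous since it involves a real logarithm of a positive magnitude, but~\eqref{eq_odd} uses $\mathrm{Arg}$, which is only defined modulo $2\pi$. I must therefore ensure that the measured phase equals the genuine linear functional $-(2\pi)^{-d/2}(t_2-t_1)\sum_{|\alpha|\ \mathrm{odd}}c_\alpha\zeta^\alpha$ rather than a shifted branch. Since only finitely many frequencies are needed (any subset of $Q$ of size $\max(\dim\cP_e,\dim\cP_o)$ that avoids the two hypersurfaces), this functional is bounded over the chosen $\zeta$, so taking $|t_2-t_1|$ small enough forces the total phase into $(-\pi,\pi)$, and the principal branch recovers the correct value. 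This is exactly where the qualifier ``if $|t_2-t_1|$ is small enough'' enters the statement.

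I would close the argument by verifying the two combinatorial identities for $\dim\cP_e$ and $\dim\cP_o$, which is routine, and by noting that the non-degeneracy hypothesis in fact already forces the cardinality bound (injectivity of the evaluation map implies $\dim\le|Q|$), so the two parts of the hypothesis are consistent, with the cardinality stated for concreteness.
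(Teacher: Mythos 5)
Your proposal is correct and follows essentially the same route as the paper: both reduce identification to the decoupled even/odd linear systems in the Fourier domain and conclude uniqueness from the full rank (injectivity) of the generalized Vandermonde evaluation matrices $A_e$, $A_o$, which the non-degeneracy hypothesis guarantees. Your write-up additionally supplies two details the paper leaves implicit or handles only in the surrounding text --- the equivalence between full rank and $Q$ avoiding purely-even/purely-odd hypersurfaces of degree $\le n$, and the explicit resolution of the $\mathrm{Arg}$ branch ambiguity by taking $|t_2-t_1|$ small --- so it is, if anything, a more complete rendering of the same argument.
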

\begin{proof}

Choose Fourier modes $\zeta_k\in Q, k=1, 2, \ldots, K\ge \max\left(  \sum_{k=0}^{\lfloor\frac{n}{2}\rfloor}\binom{2k+d-1}{d-1}, \sum_{k=0}^{\lfloor\frac{n-1}{2}\rfloor}\binom{2k+d}{d-1}\right)$ and $t_2>t_1\ge 0$, \eqref{eq_even} and~\eqref{eq_odd} imply
\begin{align}
 (2\pi)^{d/2} y_e &= -A_e c_e\;,\quad c_e^T = (c_0,c_2,\dots,c_{2\lfloor n/2\rfloor})\,,\\
 (2\pi)^{d/2} y_o &= -A_o c_o\;,\quad c_o^T = (c_1,c_3,\dots,c_{2\lfloor (n-1)/2\rfloor+1})\,,
\end{align}
respectively, where 
\[
(y_e)_k=\frac{1}{t_2-t_1}\log(\left|\widehat{u}(\zeta_k, t_2,)/\widehat{u}(\zeta_k, t_1)\right|), \quad (A_e)_{k\alpha} = \zeta_k^{\alpha}, \quad
 \alpha \mbox{ even },|\alpha|\le n
\]
and 
\[
(y_0)_k=\frac{1}{t_2-t_1}\text{Arg}(\widehat{u}(\zeta_k, t_2)/\widehat{u}(\zeta_k, t_1)), \quad (A_o)_{k\alpha} = \zeta_k^{\alpha}, \quad
 \alpha \mbox{ odd },|\alpha|\le n
\]
By the assumption, $A_e$ and $A_o$ are both of full ranks. Hence $p_{\alpha}$ can be determined uniquely.
\end{proof}
\begin{remark}
Theorem~\ref{THM:4.3} in Section~\ref{sec:local-regression} states that actually there exist solution data on a local patch (in space and time) that can identify PDE with constant coefficients by local regression if the solution has enough Fourier modes. 
\end{remark}
Determining a differential operator $\cL$ in the spectral domain requires observing the solution globally in space. In reality, it may be more practical to observe the solution merely by local sensors. In other words, one can approximate the filter function in~\eqref{EQ: WEAK} by the delta function sampled at certain points $(x_k, t_k), k=1, 2, \ldots, K$ in space and time and identify the PDE through the following least-squares problem
\begin{equation}\label{EQ: LS}
    \argmin_{\mathbf{p}} \|\mathbf{F}\mathbf{p} - \mathbf{u}_t\|_2^2
\end{equation}
where $\mathbf{F}$ is called the \textit{feature matrix} defined by a set of basic  partial differential operators, a linear combination of which can form $\cL$, acted on the observed solution at sampled locations $(t_k,x_k)$, $\mathbf{p}$ represents the unknown coefficient vector, and $$\mathbf{u}_t=[ u_t(x_1,t_1), u_t(x_2,t_2), \cdots, u_t(x_K,t_K)]^T.$$ For example, in one dimension $d=1$, the feature matrix 
$$
\mathbf{F} = \begin{bmatrix}
u(x_1, t_1)&u_x(x_1,t_1)&u_{xx}(x_1,t_1)&\cdots&u_{x}^{(n)}(x_1,t_1)\\
\vdots&\vdots&\vdots&\vdots&\vdots\\
u(x_K, t_K)&u_x(x_K,t_K)&u_{xx}(x_K,t_K)&\cdots&u_{x}^{(n)}(x_K,t_K)
\end{bmatrix}
$$

Assume the solution and its derivatives are sampled on an equally spaced grid $x_k$, $k=1,\dots, K$, at a single observation time $t_k \equiv \tau$, 
$$
\mathbf{F} = \begin{bmatrix}
u(x_1, \tau)&u_x(x_1,\tau)&u_{xx}(x_1,\tau)&\cdots&u_{x}^{(n)}(x_1,\tau)\\
\vdots&\vdots&\vdots&\vdots&\vdots\\
u(x_K, \tau)&u_x(x_K,\tau)&u_{xx}(x_K,\tau)&\cdots&u_{x}^{(n)}(x_K,\tau)
\end{bmatrix},
$$
Since the discrete Fourier transform (DFT) matrix is unitary, the singular values of $\mathbf{F}$ are identical to its discrete Fourier transform
\begin{align}
\widehat{\mathbf{F}} &= \begin{bmatrix}
\widehat{u}(\zeta_1,0)W(\zeta_1,\tau)&(i\zeta_1)\widehat{u}(\zeta_1,0))W(\zeta_1,\tau)&\cdots&(i\zeta_1)^n\widehat{u}(\zeta_1,0))W(\zeta_1,\tau)\\
\vdots&\vdots&\vdots&\vdots\\
\widehat{u}(\zeta_K,0)W(\zeta_K,\tau)&(i\zeta_K)\widehat{u}(\zeta_K,0)W(\zeta_K,\tau)&\cdots&(i\zeta_K)^n\widehat{u}(\zeta_K,0)W(\zeta_K,\tau)
\end{bmatrix}
\end{align}
where $\widehat{u}$ is the discrete Fourier transform of $u$ and $$W(\zeta_k,\tau)= \exp\left(-(2\pi)^{-1/2}\sum_{\alpha=0}^{n}p_\alpha(i\zeta_k)^\alpha \tau\right), \quad \zeta_k = k,\quad  k=1,2,\dots, K.$$ The matrix $\widehat{\mathbf{F}}$ can be factorized as $$\widehat{\mathbf{F}} = \mathbf{\Lambda} \mathbf{V},$$ where $\mathbf{\Lambda}$ is a $K\times K$ diagonal matrix with $\mathbf{\Lambda}_{kk}=\widehat{u}(\zeta_k,0)W(\zeta_k,\tau)$ and $\mathbf{V}$ is the Vandermonde matrix with $\mathbf{V}_{kj} = (i\zeta_k)^{j}$, $k=1,2,\dots, K$, $j=0,1, 2,\dots, n$.
We can see that the PDE identification problem using pointwise information is a little different from a polynomial regression problem. In addition to the Vandermonde matrix $\mathbf{V}$, we see that initial/input data $u_0$ and sampling can also affect the conditioning of $\mathbf{F}$. 
Let $\overline{\lambda}=\max_{1\le k\le K} |\mathbf{\Lambda}_{kk}|$,  $\underline{\lambda}=\min_{1\le k\le K} |\mathbf{\Lambda}_{kk}|$ and $\overline{\sigma}=\max_{0\le j\le n} |\sigma_j|, \underline{\sigma}=\min_{0\le j\le n} |\sigma_j|$, where $\sigma_j$  are eigenvalues of the corresponding Vandermonde matrix. 
Assume $K=n+1$, denote $L_j(\zeta)=\sum_{k=0}^{n}=\xi_{jk}(i\zeta)^k$ to be the Lagrange basis polynomials, i.e., $L_j(\zeta_k)=\delta_{jk}$ and $\mathbf{\xi}_j=[\xi_{j0}, \xi_{j1}, \ldots, \xi_{jn}]^T$.  We have $\mathbf{V}\mathbf{\xi}_j=\mathbf{e}_j$, where $\mathbf{e}_j$ is the canonical basis in $\mathbb{R}^{n+1}$, and $\overline{\sigma}^{-1}\le \|\mathbf{\xi}_j\|\le \underline{\sigma}^{-1}$. On the other hand, let $\omega_j, j=0,1, \ldots, n$ be the eigenvalues of $\widehat{\mathbf{F}}$ and $\overline{\omega}=\max_{0\le j\le n}|\omega_j|$, $\underline{\omega}=\min_{0\le j\le n} |\omega_j|$. We have $\underline{\lambda} =\min_{0\le j\le n} \|\widehat{\mathbf{F}}\mathbf{\xi}_j\|,
~\overline{\lambda}=\max_{0\le j\le n} \|\widehat{\mathbf{F}}\mathbf{\xi}_j\| $, which implies $\overline{\omega} \ge \overline{\lambda} \underline{\sigma}$ and $\underline{\omega} \le \underline{\lambda} \overline{\sigma}$, and hence $\frac{\overline{\omega} }{\underline{\omega}}\ge \frac{\overline{\lambda} \underline{\sigma}}{\underline{\lambda}\overline{\sigma}}$.
For example, $\mathbf{F}$ may become ill-conditioned if 
\begin{enumerate}
    \item Fourier modes with small amplitudes in the initial/input data are involved;
    \item  the differential operator $\cL$ is elliptic and the solution is sampled at a large time (due to the exponential decay of $|W(\zeta_k,\tau)|$ in time).
\end{enumerate}
In general, using the solution data at all points on a rectangular grid in space and time, which most existing methods are based on, may be too costly or not practical in real applications. In particular, for PDEs with constant coefficients, it is unnecessary. 
Moreover, it may not be a good strategy for PDE identification even if the data are available. For example, at certain sampling locations, the solution may be degenerate, e.g., (nearly) zero in a neighborhood, which is then sensitive to noise, or becomes singular, which can lead to large numerical errors. However, we do not know what type of PDE or initial data \emph{a priori}, so the sampling strategy and PDE identification method should be data-driven and data-adapted. 

\subsection{PDE identification with variable coefficients using a single trajectory}
For the identification of time-dependent PDEs with coefficients varying in space, the unknowns are (coefficient) functions, which means 1) the regression problems at different spatial locations are different; 2) the variations of the coefficients are intertwined with the solution in both frequency and physical domains.

 In the following, we will start with an identifiability and stability study. Then we propose a computational model for PDE identification with variable coefficients that enforces both local regression and global pattern consistency. The main goal is to identify a consistent differential operator that is built from as few terms as possible from the library that can fit observed data well locally by using different linear combinations at different locations. Once the PDE type is determined, a more accurate estimation of coefficients can be achieved by independent local regression and/or appropriate regularization.

\subsection{Identifiability with a single trajectory}
In this section, we focus on the question that whether or not a single solution trajectory $u(x, t)$ can determine the underlying differential operator from a given library, i.e. identifiability of the differential operator $\cL$. In the sequel, we first introduce the general statement in parallel to the identifiability statement for PDE with constant coefficients, which states if there are enough Fourier modes in the initial data and one can observe a single trajectory at two different instants, one can recover those constant coefficients. For PDEs with variable coefficients, one has to recover these unknown functions. Hence more Fourier modes in the initial data and more snapshots along the trajectory depending on the order of the differential operators and space dimensions are needed for identifiability. 
\begin{theorem}
Let $m = \binom{n+d}{d}$. For any given $x\in\Omega$, the parameters $p_{\alpha}(x)$ can be recovered if and only if one can find $m$ instants $t_1,\dots, t_m$ that the matrix $A = (A_{k,\alpha} ) $ is non-singular where $A_{k,\alpha} := \partial^{\alpha} u(x, t_k) $.
\end{theorem}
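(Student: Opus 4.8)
The plan is to reduce identifiability at the fixed point $x$ to the solvability of a single square linear system and then read off the rank condition directly. First I would note that the number of unknowns is exactly $m$: the coefficients $\{p_{\alpha}(x)\}_{|\alpha|\le n}$ are indexed by multi-indices $\alpha\in\bbN^d$ with $0\le|\alpha|\le n$, and a stars-and-bars (hockey-stick) count gives $\#\{\alpha:|\alpha|\le n\}=\sum_{j=0}^{n}\binom{j+d-1}{d-1}=\binom{n+d}{d}=m$. Evaluating $\partial_t u=-\cL u$ at $x$ and at a time $t$ gives the scalar relation $\partial_t u(x,t)=-\sum_{|\alpha|\le n}p_{\alpha}(x)\,\partial^{\alpha}u(x,t)$, which is linear in the unknown vector $\mathbf{p}=(p_{\alpha}(x))_{\alpha}\in\bbR^m$. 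Writing $v(t):=(\partial^{\alpha}u(x,t))_{|\alpha|\le n}\in\bbR^m$ and $b(t):=-\partial_t u(x,t)$, every admissible time supplies one equation $\langle v(t),\mathbf{p}\rangle=b(t)$, and selecting $m$ instants $t_1,\dots,t_m$ assembles exactly the square system $A\mathbf{p}=\mathbf{b}$ with $A_{k,\alpha}=\partial^{\alpha}u(x,t_k)=(v(t_k))_{\alpha}$ and $\mathbf{b}_k=b(t_k)$.

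For the ``if'' direction I would argue that whenever some choice of instants makes $A$ nonsingular, the system $A\mathbf{p}=\mathbf{b}$ has the unique solution $A^{-1}\mathbf{b}$; since the true coefficients satisfy the governing relation at every $t_k$, they solve this system and therefore coincide with $A^{-1}\mathbf{b}$, i.e.\ they are recovered.

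The ``only if'' direction carries the real content, and I would phrase it through the span $W:=\operatorname{span}\{v(t):t\in[0,T]\}\subseteq\bbR^m$. The totality of time-sampled equations determines $\mathbf{p}$ only modulo the orthogonal complement $W^{\perp}$: any $\mathbf{q}\in W^{\perp}$ satisfies $\langle v(t),\mathbf{q}\rangle=0$ for all $t$, so $\mathbf{p}+\mathbf{q}$ solves every equation as well. Hence the coefficients are uniquely recoverable precisely when $W^{\perp}=\{0\}$, that is $W=\bbR^m$. The closing step is the elementary but essential equivalence that $W=\bbR^m$ holds if and only if one can extract $m$ instants $t_1,\dots,t_m$ with $v(t_1),\dots,v(t_m)$ linearly independent — equivalently with $A$ nonsingular — which is simply the fact that a finite-dimensional space spanned by a family of vectors admits a basis drawn from that family. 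Combining both directions yields the stated biconditional.

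I expect the main obstacle to be conceptual rather than computational: making precise what ``can be recovered'' means (unique determination of $\mathbf{p}$ from the full single-trajectory data at $x$) and recognizing that non-identifiability is \emph{exactly} a nontrivial $W^{\perp}$, so that ``$A$ is singular for every choice of $m$ instants'' is equivalent to the data curve $t\mapsto v(t)$ being confined to a proper subspace. Everything else, namely the multi-index count and the invertibility bookkeeping, is routine.
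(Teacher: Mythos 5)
Your proposal is correct and takes essentially the same route as the paper: the paper's proof is a single line that reduces everything to the square linear system $\sum_{|\alpha|\le n} p_{\alpha}(x)\,\partial^{\alpha}u(x,t_k) = -u_t(x,t_k)$, $k=1,\dots,m$, which is exactly your system $A\mathbf{p}=\mathbf{b}$. Your write-up simply supplies the details the paper leaves implicit, in particular the ``only if'' direction via the span $W$ and the extraction of a basis from the family $\{v(t)\}$, which is the right way to make the biconditional rigorous.
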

\begin{proof}
This is directly from the following linear system of $p_{\alpha}(x)$,
\begin{equation}
    \sum_{|\alpha|=0}^n p_{\alpha}(x) \partial^{\alpha} u(x, t_k) = -u_t(x, t_k),\quad k = 1,2,\dots, m.
\end{equation}
\end{proof}

Consider the limiting case that $t_1,\dots, t_m\to 0$. One can then take  $k$-th derivatives of the solution in time at $t=0$. The above lemma becomes the requirement that the matrix $S:= (S_{k, \alpha}), k=1, 2, \ldots, m$ with $S_{k, \alpha} := \partial^{\alpha}\cL^{k-1} u_0(x) $ is non-singular almost everywhere. In the following, we show that if the initial condition is randomly generated, then $S$ is almost surely non-singular if the parameters $p_{\alpha}$ are sufficiently smooth. In the following, we denote the diagonal multi-indices set $D_n$ by
\begin{equation}\nonumber
    D_n : = \{\alpha = (\alpha_1, \dots, \alpha_d)\mid |\alpha|=n\text{ and there exists  $1\le j\le d$ such that }\alpha_j = n\}.
\end{equation}
\begin{theorem}
Assume the coefficients $p_{\alpha}\in C^{mn}(\Omega)$ and let the initial condition $u_0$ be generated by 
\begin{equation}
    u_0(x) = \sum_{j=1}^r w_j e^{i\zeta_j \cdot x}
\end{equation}
where $r > \binom{mn + d}{d}$ and the vectors $\zeta_j\in\bbZ^d$ are not on an algebraic hypersurface of degree $mn$ and the weights $w_j$ are random variables that $w_j\sim \cU[a_j, b_j]$ with $a_j < b_j$. Denote $\bbP$ the induced probability measure on $\Omega\times \prod_{j=1}^m[a_j, b_j]$. If $\sum_{\alpha\in D_n} |p_{\alpha}(x)|^2\neq 0$ almost everywhere in $\Omega$, then the matrix $S$ is non-singular $\bbP$-almost surely.
\end{theorem}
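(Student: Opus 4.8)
The plan is to show that $\det S$, regarded as a function of $x$ and of the random weights $w=(w_1,\dots,w_r)$, vanishes only on a set of $\bbP$-measure zero. Since each entry $S_{k,\alpha}=\partial^\alpha\cL^{k-1}u_0(x)$ is linear in $u_0$, and $u_0$ is linear and homogeneous in $w$, the map $w\mapsto\det S(x,w)$ is a homogeneous polynomial of degree $m$ in $w$, with coefficients that are continuous in $x$ (here the hypothesis $p_\alpha\in C^{mn}$ supplies all the derivatives that the repeated application of $\cL$ and of $\partial^\alpha$ demands). A nonzero polynomial vanishes on a Lebesgue-null set, so by Fubini it suffices to prove that for almost every $x$ the polynomial $w\mapsto\det S(x,w)$ is not identically zero; the exceptional $x$ will be exactly the null set where $\sum_{\alpha\in D_n}|p_\alpha(x)|^2=0$.

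The key reduction is to localize in the \emph{jet} of $u_0$ at $x$. Because $\cL$ has order $n$ and is applied at most $m-1$ times before a derivative $\partial^\alpha$ with $|\alpha|\le n$, the entry $S_{k,\alpha}(x)$ depends only, and linearly, on the derivatives $\partial^\gamma u_0(x)$ with $|\gamma|\le mn$. Collecting these into a jet vector $J\in\bbC^{N}$, $N=\binom{mn+d}{d}$, we may write $\det S(x,w)=P_x(J)$ with $J=\Theta_x(w)$, where $P_x$ is a fixed polynomial on $\bbC^N$ and $\Theta_x\colon w\mapsto(\partial^\gamma u_0(x))_{|\gamma|\le mn}$ is linear. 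The jet of $e^{i\zeta_j\cdot x}$ is $e^{i\zeta_j\cdot x}$ times the moment vector $((i\zeta_j)^\gamma)_{|\gamma|\le mn}$, so the image of $\Theta_x$ is the span of these moment vectors; this span is all of $\bbC^N$ precisely when no nonzero polynomial of degree $\le mn$ annihilates every $\zeta_j$, i.e. exactly under the hypothesis that the $\zeta_j$ do not lie on an algebraic hypersurface of degree $mn$ (which also forces $r\ge N$, consistent with $r>\binom{mn+d}{d}$). Thus $\Theta_x$ is onto $\bbC^N$ for every $x$, and $w\mapsto P_x(\Theta_x(w))$ is not identically zero as soon as $P_x\not\equiv 0$.

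It remains to exhibit, for a.e. $x$, one jet on which $\det S$ is nonzero. I would use the trial input $u_0=\sum_{l=1}^m c_l\,e^{i\lambda\xi_l\cdot x}$ with fixed directions $\xi_1,\dots,\xi_m$ and a dilation parameter $\lambda\to\infty$, and read off the leading term of $\det S$. Writing $\partial^\alpha\cL^{k-1}e^{i\zeta\cdot x}=Q_{k,\alpha}(x,\zeta)e^{i\zeta\cdot x}$ with $Q_{k,\alpha}(x,\zeta)=(i\zeta)^\alpha\sigma_n(x,\zeta)^{k-1}+(\text{lower order in }\zeta)$, where $\sigma_n(x,\zeta)=\sum_{|\alpha|=n}p_\alpha(x)(i\zeta)^\alpha$ is the principal symbol, every entry scales as $S_{k,\alpha^{(t)}}\sim\lambda^{\,n(k-1)+|\alpha^{(t)}|}$, and this top power of $\lambda$ is common to all permutations in the determinant. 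Factoring $\lambda^{n(k-1)}$ from each row and $\lambda^{|\alpha^{(t)}|}$ from each column, the leading coefficient of $\det S$ equals $\det\!\big(V\,\mathrm{diag}(b)\,U\big)$ with $V_{k,l}=\sigma_n(x,\xi_l)^{k-1}$ (Vandermonde in the symbol values), $b_l=c_l e^{i\lambda\xi_l\cdot x}\neq 0$, and $U_{l,t}=(i\xi_l)^{\alpha^{(t)}}$ (the monomial matrix), which factorizes as $\prod_{l<l'}\big(\sigma_n(x,\xi_{l'})-\sigma_n(x,\xi_l)\big)\cdot\prod_l b_l\cdot\det U$. The hypothesis $\sum_{\alpha\in D_n}|p_\alpha(x)|^2\neq 0$ means some coefficient $p_\alpha(x)$ with $\alpha\in D_n$ (a pure derivative $\partial_{x_j}^n$) is nonzero, so $\sigma_n(x,\cdot)$ is a nonzero homogeneous polynomial of degree $n\ge 1$, hence non-constant; generic $\xi_l$ then make the symbol values $\sigma_n(x,\xi_l)$ distinct and $\det U\neq 0$ (the distinct monomials $\zeta^{\alpha^{(t)}}$ are linearly independent, so $\det U$ is a nonzero polynomial in the $\xi_l$). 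With any $c_l\neq 0$ the leading coefficient is nonzero, so $\det S\neq 0$ for large $\lambda$, giving $P_x\not\equiv 0$.

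The step I expect to be the main obstacle is exactly establishing $P_x\not\equiv 0$ without cancellation. The tempting route — expanding $\det S$ multilinearly over the frequencies and reading the coefficient of a square-free monomial $w_{j_1}\cdots w_{j_m}$ — fails, because that coefficient symmetrizes the single-frequency contributions and the leading symbol terms cancel through $\sum_{\pi\in S_m}\mathrm{sgn}(\pi)=0$. Passing to the jet formulation and extracting the leading term of a concrete multi-frequency input under the dilation $\zeta\mapsto\lambda\xi$ is what removes the symmetrization and exposes the clean product $\det V\cdot\det U$; the technical heart is checking that this genuine leading coefficient is not destroyed by the lower-order tails of the $Q_{k,\alpha}$, which is clear because they all carry strictly smaller powers of $\lambda$. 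Once $P_x\not\equiv 0$ is in hand, the surjectivity of $\Theta_x$, the null-set property of polynomial zero sets, and Fubini combine to give the almost-sure nonsingularity of $S$.
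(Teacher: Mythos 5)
Your proposal is correct in substance, and it takes a genuinely different route from the paper's at the decisive step. Both arguments begin identically: $\det S$ is a degree-$m$ polynomial in the weights $w$, so by the null-set property of polynomial zero sets and Fubini it suffices to show that, for a.e.\ $x$, this polynomial is not identically zero. The paper proves this by contradiction and duality: if the polynomial vanished for all $w$, a trace-duality argument (Lemma 3.9 of Rudin) produces a nonzero matrix $\bc=(c_{k,\alpha})$ with $\mathrm{trace}(\bc\, S^j)=0$ for all $j$, i.e.\ an operator $\cL'=\sum_{k,\alpha}c_{k,\alpha}\partial^{\alpha}\cL^{k-1}$ annihilating every $e^{i\zeta_j\cdot x}$ at the fixed point; the diagonal condition on $D_n$ is then invoked to show $\cL'$ is a \emph{nontrivial} differential operator of order at most $mn$, whose symbol is a nonzero polynomial of degree $\le mn$ vanishing at every $\zeta_j$ --- contradicting the hypersurface hypothesis. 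You instead factor $\det S$ through the $mn$-jet of $u_0$ at $x$, note that the hypersurface hypothesis is exactly the statement that the moment vectors $\left((i\zeta_j)^{\gamma}\right)_{|\gamma|\le mn}$ span $\bbC^N$, and then establish $P_x\not\equiv 0$ constructively, by evaluating on the high-frequency test input $\sum_{l}c_l e^{i\lambda\xi_l\cdot x}$ and extracting the leading term: after the row/column rescaling the limit matrix factorizes as $V\,\mathrm{diag}(b)\,U$, a Vandermonde in the principal-symbol values times the moment matrix, both nonsingular for generic directions $\xi_l$ because $\sigma_n(x,\cdot)\neq 0$. (Your treatment of the oscillatory factors is right: $|e^{i\lambda\xi_l\cdot x}|=1$, so the modulus of the leading determinant is bounded below uniformly in $\lambda$, and the $O(1/\lambda)$ tails cannot destroy it.) Your route buys two things: it entirely avoids the delicate no-cancellation argument needed to show $\cL'\neq 0$ --- the only place where the paper actually uses the $D_n$ structure --- and it proves a slightly stronger theorem, since you only use $\sum_{|\alpha|=n}|p_{\alpha}(x)|^2\neq 0$ a.e.\ (nonvanishing principal symbol) rather than the diagonal condition. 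The price is the explicit leading-symbol asymptotics, which the paper's more abstract duality argument sidesteps.

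One point to tighten. The weights $w_j$ are real, so $\Theta_x$ is literally a map $\bbR^r\to\bbC^N$ whose image is a \emph{real} span of $r$ vectors; it cannot be onto $\bbC^N$ (real dimension $2N$) when $r<2N$, so your sentence ``$\Theta_x$ is onto $\bbC^N$ for every $x$'' is not accurate as stated. What you need, and what is true, is the complexified statement: the complex-linear extension $\Theta_x^{\bbC}:\bbC^r\to\bbC^N$ is onto (its image is the complex span of the vectors $e^{i\zeta_j\cdot x}\left((i\zeta_j)^{\gamma}\right)_{\gamma}$, which is $\bbC^N$ by the hypersurface condition), hence $P_x\circ\Theta_x^{\bbC}\not\equiv 0$ on $\bbC^r$; and a polynomial that is not identically zero on $\bbC^r$ cannot vanish identically on $\bbR^r$, so the real zero set of $w\mapsto\det S(x,w)$ is Lebesgue-null in $\prod_j[a_j,b_j]$. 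This is a two-line repair and does not change your architecture; the same real-versus-complex issue is implicit (and likewise harmless) in the paper's pairing of real $w_j$ against the complex matrices $S^j$.
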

\begin{proof}
Let $x'\in\Omega$ be a fixed point that $\sum_{D_n} |p_{\alpha}(x ')|^2\neq 0$. Notice that the probability $\bbP[S \text{ is non-singular}] = 1 - \bbP[\det S = 0]$, since $S_{k, \alpha}(x ') =  \sum_{j=1}^r w_j \partial^{\alpha} \cL^{k-1} e^{i\zeta_j\cdot x} |_{x=x'}$, the determinant $\det(S)$ can be written in the following form
\begin{equation}\label{EQ: DET EQUATION}
    \sum_{|\beta| = m} F_{\beta}(\{\partial^{\gamma} p_{\alpha}(x ')\}; \{ \zeta_j \}) e^{i(\beta_1\zeta_1+\cdots +\beta_r\zeta_r)\cdot x '}w_1^{\beta_1} w_2^{\beta_2}\cdots w_{r}^{\beta_r} = 0,
\end{equation}
where $\beta = (\beta_1,\beta_2,\dots, \beta_r)$ and each $F_{\beta}$ is a polynomial of $\partial^{\gamma} p_{\alpha}(x)$ with $|\alpha|\le n$, $|\gamma|\le (m-1)n$, and the degree of the polynomial is at most $m-1$, each coefficient of the polynomial is uniquely determined by a polynomial in terms of $\zeta_j$, $1\le j\le r$.

It is clear that the set of $w_i, i=1, 2,\ldots, r$ that satisfy \eqref{EQ: DET EQUATION} has measure zero in $\prod_{j=1}^r[a_j, b_j]$ unless all of the coefficients $F_{\beta} = 0$ at $x'\in \Omega$. If the later is true, it means $\det(S) = 0$ at $x'\in \Omega$ for any $w_j\in [a_j, b_j]$, $j=1,\dots, r$. Let the matrix $S^j:= (S_{k,\alpha}^j)$, where $S_{k,\alpha}^j = \partial^{\alpha}\cL^{k-1} e^{i\zeta_j\cdot x}|_{x=x'}$ and we denote the linear functional $f_j:\bbC^{m\times m}\to \bbC$, $j=1,2,\dots, r$ by 
\begin{equation}
    f_j(X):= \mathrm{trace}(X S^j).
\end{equation}
In the following, we show that $\bigcap_{j=1}^r \ker f_j \neq \{\bzero\}$. By the Lemma 3.9 of~\cite{rudin1991functional}, if $ \bigcap_{j=1}^r \ker f_j = \{\bzero\}$, then any linear functional $f$ on $\bbC^{m\times m}$ can be written as linear combination
\begin{equation}
   f(X) = \sum_{j=1}^r w_j \mathrm{trace}(X S^j ) = \mathrm{trace} (X \sum_{j=1}^r w_j S^j).
\end{equation}
However, if we take a full rank matrix $B\in\bbC^{m\times m}$ and define $\tilde{f}(X) = \mathrm{trace}(X B)$, then $B$ can be represented by $B = \sum_{j=1}^r w_j S^j$, which violates the condition that $\det (\sum_{j=1}^r w_j S^j) = 0$.
Hence there exists a nonzero matrix $\bc= (c_{k,\alpha})\in \bbC^{m\times m}$ such that
\begin{equation}\label{EQ: MODES}
     \sum_{|\alpha|\le n}\sum_{k=1}^m c_{k, \alpha} \partial^{\alpha} \cL^{k-1} e^{i\zeta_j\cdot x}|_{x=x'} = 0,\quad \quad 1\le j\le r .
\end{equation}
The differential operator $\cL' :=  \sum_{|\alpha|\le n}\sum_{k=1}^m c_{k,\alpha}\partial^{\alpha} \cL^{k-1}$ is uniquely determined by the derivatives $\partial^{\gamma} p_{\alpha}$ and has the order of at most $mn$. Without loss of generality, we denote $\cL'$ by 
\begin{equation}
    \cL' := \sum_{|\gamma|=0}^{mn} q_{\gamma}(x) \partial^{\gamma}.
\end{equation} 
We first show $\cL'$ is non-trivial. Let $1\le m'\le m$ be the largest index that $c_{m',\alpha'}\neq 0$ for some $\alpha'$, then according to the assumption, almost everywhere in $\Omega$, at least one of the leading order terms  $c_{m',\alpha'}p_{\alpha}^{m'-1}(x)\partial^{(m'-1)\alpha+\alpha'}$ for certain $\alpha\in D_n$ in $c_{m',\alpha'}\partial^{\alpha'}\cL^{m'-1}$ is nonzero. 
In the next, we denote $(i\zeta)^{\gamma} := \prod_{k=1}^d (i\zeta_{k})^{\gamma_k}$, where $\zeta = (\zeta_{1},\dots, \zeta_{d})$ and $\gamma = (\gamma_1,\dots, \gamma_d)$. Since $r > \binom{mn+d}{d}$ and the vectors $\zeta_j\in \bbZ^d$, $1\le j\le r$  are not located on an algebraic polynomial hypersurface of degree $\le mn$, then the equation
\begin{equation}
   \cL' e^{i\zeta_j\cdot x}|_{x=x'} =  \left( \sum_{|\gamma|=0}^{mn} q_{\gamma}(x') (i\zeta_j)^{\gamma} \right) e^{i\zeta_j\cdot x'}= 0,\quad 1\le j\le r
\end{equation}
implies $\sum_{|\gamma|=0}^{mn} q_{\gamma}(x ') (i\zeta_j)^{\gamma}=0$ which gives a contradiction.
\end{proof}

\subsubsection{Ergodic Orbits}
In the following, we consider the solution to~\eqref{EQ: HYPERBOLIC} with infinite observation time $T=\infty$. The previous dimensionality analysis of the solution space would not work due to the non-compactness. 
For the sake of simplicity, we assume $\bc(x)$ is measure-preserving and has no singular points, that is
\begin{equation}
    \nabla \cdot \bc(x) = 0, \quad |\bc(x)|\neq 0.
\end{equation}
For dimension $d=2$, the ergodic properties of the measure-preserving dynamic system
\begin{equation}\label{EQ: DYNAMIC}
    \dot{X}(t) = \bc(X),\quad X(0) = x_0
\end{equation}
on torus have been studied by~\cite{sternberg1957differential, saito1951measure, akutowicz1958ergodic}. Let $$(a_1, a_2) = \frac{1}{|\Omega|}\int_{\Omega} \bc(x) dx, $$
it has been proved that if $a_1/a_2$ is irrational, then the flow $X(t)$
is ergodic and can be regarded as a rectilinear flow on a two-dimensional Euclidean torus by some suitable choice of coordinate change. The dynamics on a high dimensional torus are studied by~\cite{saito1965dynamical,kozlov2007dynamical}.
\begin{theorem}[\cite{saito1951measure}] For $d=2$, every orbit of~\eqref{EQ: DYNAMIC} is ergodic if and only if $a_1, a_2$ are linearly independent with respect to integral coefficients. 
\end{theorem}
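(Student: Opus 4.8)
The plan is to reduce the problem to the classical analysis of linear (rectilinear) flows on $\bbT^2$ by straightening the orbits, and then to read off ergodicity from the arithmetic of the average velocity $(a_1,a_2)$. Throughout, the two standing hypotheses $\nabla\cdot\bc=0$ and $|\bc|\neq 0$ play complementary roles: divergence-freeness makes the normalized Lebesgue measure on $\bbT^2$ invariant under the flow $X(t)$, while nonvanishing guarantees the flow has no fixed points, so that Poincar\'e--Bendixson-type obstructions are absent and a global transversal can be sought.

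First I would record the Hodge-type decomposition $\bc = (a_1,a_2) + \nabla^{\perp}h$, valid because the $1$-form dual to a divergence-free field on the flat torus is co-closed; here $h$ is a single-valued periodic stream function and $(a_1,a_2)$ is exactly the harmonic (constant) part, i.e.\ the spatial average. This exhibits $(a_1,a_2)$ as the asymptotic direction of the flow and already signals the two regimes. Assuming $(a_1,a_2)\neq\bzero$ (the case $(a_1,a_2)=\bzero$ is degenerate: orbits then lie on the closed level sets of $h$ and are never ergodic, consistent with $0,0$ being integrally dependent), I would produce a transversal circle $\Sigma$ meeting every orbit and form the first-return (Poincar\'e) map $P:\Sigma\to\Sigma$. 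Because the flow preserves Lebesgue measure, $P$ preserves a smooth probability measure of full support on $\Sigma$; by the theory of circle diffeomorphisms (Poincar\'e's rotation number together with Denjoy's theorem in the irrational case), $P$ is conjugate to the rigid rotation by its rotation number $\rho$. Suspending this picture shows that $X(t)$ is, after a measure-preserving change of coordinates and time, the rectilinear flow on $\bbT^2$ with velocity proportional to $(a_1,a_2)$; in particular $\rho$ is rational if and only if the direction $(a_1,a_2)$ is rational, i.e.\ if and only if $a_1,a_2$ are integrally dependent.

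With the flow straightened, the conclusion follows from the elementary analysis of the linear flow $\dot y = (a_1,a_2)$. For the ``if'' direction, when $a_1,a_2$ are integrally independent I would expand an $L^2$ invariant function $f=\sum_{k\in\bbZ^2}\widehat f(k)e^{ik\cdot y}$ and use invariance to obtain $(k\cdot(a_1,a_2))\,\widehat f(k)=0$ for all $k$; integral independence forces $\widehat f(k)=0$ for $k\neq\bzero$, so $f$ is constant and the flow is ergodic. In fact the rotation map is then minimal and uniquely ergodic, so every orbit equidistributes. For the ``only if'' direction, if $a_1,a_2$ are integrally dependent then $\rho$ is rational, $P$ has a periodic point, and the flow carries a genuine periodic orbit supporting an invariant measure singular with respect to Lebesgue; equivalently one builds a nonconstant invariant function from a mode $k\neq\bzero$ with $k\cdot(a_1,a_2)=0$. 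Either way ergodicity fails on that orbit. Since ergodicity and the orbit structure are preserved by the measure-preserving conjugacy, transferring these statements back to the original flow completes the argument.

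The main obstacle is the straightening step: constructing the global transversal $\Sigma$ and establishing that $P$ is conjugate to a rotation whose rotation number encodes the arithmetic of $(a_1,a_2)$. This is where the nonvanishing and divergence-free hypotheses are used essentially (to rule out fixed points and to supply an invariant measure for both the flow and $P$), and where one must invoke Denjoy's theorem to pass from an irrational rotation number to an honest conjugacy rather than a mere semi-conjugacy. The remaining Fourier-analytic step and the transfer under conjugacy are routine.
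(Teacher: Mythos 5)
The paper itself contains no proof of this statement: it is quoted directly from Saito's 1951 paper, so there is no internal argument to compare against, and your sketch must be judged against the classical proof it is reconstructing. Your strategy --- straighten the flow via a global cross-section and a suspension over a circle rotation, then settle ergodicity of the resulting linear flow by Fourier analysis --- is indeed the standard route (Kolmogorov; Cornfeld--Fomin--Sinai), and no step of it is false. But as written, the two load-bearing steps are asserted rather than proved, and they are exactly where the content of the theorem lives. First, the existence of a global transversal circle for a fixed-point-free flow on $\bbT^2$ is a genuine theorem that \emph{requires} the invariant measure: without area preservation it fails (a Reeb flow on $\bbT^2$ has no fixed points and no global section), so you must actually run the argument combining recurrence of the measure-preserving flow with the absence of fixed points. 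Second, the equivalence ``$\rho$ rational $\iff$ $a_1,a_2$ integrally dependent'' is the analytic heart of the theorem and is not free: it requires identifying $(a_1,a_2)$ as the rotation vector (Schwartzman asymptotic cycle) of the flow with respect to Lebesgue measure and tracking it through the suspension isomorphism. Moreover, the conjugacy produced by the cross-section construction acts on $H_1(\bbT^2;\bbZ)$ by some matrix $A$ in $GL(2,\bbZ)$, so the straightened velocity is proportional to $A(a_1,a_2)$, not to $(a_1,a_2)$ itself; the statement survives only because integral (in)dependence is invariant under $GL(2,\bbZ)$ and under positive scaling, and this needs to be said explicitly.

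Three smaller points, two of which would simplify your proof considerably. (i) Denjoy's theorem is unnecessary: the return map preserves a measure with smooth positive density (induced from Lebesgue by the flow), and the primitive of that density conjugates the map to a rigid rotation outright, whatever the rotation number. (ii) Your ``degenerate case'' $(a_1,a_2)=\bzero$ is in fact vacuous: it would give $\bc=\nabla^{\perp}h$ with $h$ periodic, and $h$ attains a maximum on the compact torus, where $\bc$ vanishes, contradicting the standing hypothesis $|\bc|\neq 0$. (iii) The ``only if'' direction needs none of the machinery: from your own decomposition, set $H(x,y)=a_1y-a_2x+h(x,y)$ (signs matched to your convention for $\nabla^{\perp}$), so that $\nabla H$ is $\bc$ rotated by a right angle and every orbit stays in a level set of the multivalued Hamiltonian $H$. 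If $(a_1,a_2)=\lambda(p,q)$ with $(p,q)\in\bbZ^2\setminus\{\bzero\}$ and $\lambda\neq 0$, then $e^{iH/\lambda}$ is a well-defined, smooth, non-constant function on $\bbT^2$ that is constant along orbits; hence no orbit is dense and the flow is not ergodic, with no cross-section, rotation number, or conjugacy required. The heavy machinery is then needed only for the ``if'' direction, where it genuinely cannot be avoided.
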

\begin{theorem}[\cite{saito1965dynamical}]
For $d\ge 3$ and assume every orbit of~\eqref{EQ: DYNAMIC} is Lyapunov stable in both directions in time,
then every orbit is ergodic if and only if $a_1, a_2,\dots, a_d$ are linearly independent with respect to integral coefficients. 
\end{theorem}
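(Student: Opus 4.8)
The plan is to reduce the statement to the classical equidistribution criterion for a rectilinear (Kronecker) flow, exactly as in the $d=2$ case discussed above. Concretely, I would first show that under the hypotheses the flow generated by~\eqref{EQ: DYNAMIC} is topologically conjugate, via a Lebesgue-measure-preserving homeomorphism $\Phi$ of $\bbT^d$, to a constant-velocity translation flow $\psi_t^b(y)=y+tb \pmod{2\pi\bbZ^d}$. Once this rectification is established, ``every orbit is ergodic'' becomes a statement about the linear flow that is settled by Weyl's theorem, and the arithmetic condition on $(a_1,\dots,a_d)$ drops out.

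The rectification is where the two-sided Lyapunov stability is used. Stability in both time directions for every orbit forces the family of time-$t$ maps $\{\phi_t\}_{t\in\bbR}$ to be uniformly equicontinuous on the compact manifold $\bbT^d$. By Arzel\`a--Ascoli its uniform closure $G=\overline{\{\phi_t:t\in\bbR\}}$ is a compact, connected, abelian group of homeomorphisms, each flow orbit closure coincides with a $G$-orbit, the condition $|\bc|\neq 0$ rules out fixed points, and $\nabla\cdot\bc=0$ makes every element of $G$ Lebesgue-measure-preserving. The structure theory of equicontinuous (isometric) flows then furnishes a conjugacy of $(\bbT^d,\phi_t)$ to a translation flow on a compact abelian group, which, the phase space being the torus, can be realized as a translation flow $\psi_t^b$ on $\bbT^d$ itself through a measure-preserving homeomorphism $\Phi$. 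It remains to relate the arithmetic of $b$ to that of $a$: the rotation (drift) vector of the flow, obtained by averaging the divergence-free field against the invariant Lebesgue measure, is $\tfrac{1}{|\Omega|}\int_{\Omega}\bc=a$; lifting $\Phi$ to the universal cover, its action on $H_1(\bbT^d)=\bbZ^d$ is some $A\in\mathrm{GL}_d(\bbZ)$, and comparing asymptotic displacements of conjugate orbits gives $b=Aa$. I do not need $b=a$ exactly: since $A$ is invertible over $\bbZ$, the components of $b$ admit a nontrivial integer relation if and only if those of $a$ do, so the arithmetic hypothesis is conjugacy-invariant.

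The linear flow is then handled by Fourier analysis. For a character $\chi_k(y)=e^{i\langle k,y\rangle}$ with $k\in\bbZ^d$, the time average along the orbit through any $y$ is
\[
\lim_{T\to\infty}\frac{1}{T}\int_0^T \chi_k(y+tb)\,dt=\chi_k(y)\,\lim_{T\to\infty}\frac{1}{T}\int_0^T e^{it\langle k,b\rangle}\,dt,
\]
which equals the space average $\int_{\bbT^d}\chi_k$ trivially for $k=0$, and for $k\neq0$ only when $\langle k,b\rangle\neq 0$. Hence every orbit equidistributes precisely when $\langle k,b\rangle\neq 0$ for all nonzero $k\in\bbZ^d$, i.e. when the components of $b$ — equivalently of $a$ — admit no nontrivial integer linear relation; conversely a relation $\langle k,b\rangle=0$ with $k\neq 0$ produces a nonconstant invariant character and destroys equidistribution. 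Transporting this back through $\Phi$ and density of trigonometric polynomials yields the claimed equivalence.

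The main obstacle is plainly the rectification step. Everything downstream is classical once the flow is known to be linearizable, but producing the conjugating homeomorphism $\Phi$ is exactly where the two-sided Lyapunov stability must be invoked in an essential way. For $d=2$ one can bypass the abstract group structure using a global circle cross-section and rotation-number theory; for $d\geq 3$ no such low-dimensional reduction is available, and making the passage ``equicontinuity $\Rightarrow$ translation flow on $\bbT^d$'' rigorous — including the non-minimal case where orbit closures are proper subtori — carries the real weight of the argument.
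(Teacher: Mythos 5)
The first thing to note is that the paper contains no proof of this statement at all: it is quoted, with attribution, from~\cite{saito1965dynamical} as background for the ergodicity corollary that follows, so there is no argument in the paper to compare yours against and your proposal has to stand entirely on its own. Taken on its own, your overall strategy (two-sided Lyapunov stability $\Rightarrow$ equicontinuity of $\{\phi_t\}_{t\in\bbR}$ $\Rightarrow$ compact connected abelian group closure $G$ via Arzel\`a--Ascoli $\Rightarrow$ rectification to a linear flow $\Rightarrow$ Weyl's criterion) is the classically correct route, and the outer layers are sound: pointwise two-sided stability plus compactness of $\bbT^d$ does upgrade to uniform equicontinuity; the homological bookkeeping $b=Aa$ with $A\in\mathrm{GL}_d(\bbZ)$ correctly shows the arithmetic condition is conjugacy-invariant; and the Fourier/Weyl analysis of the linear flow is standard and correct.

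The genuine gap is exactly the step you flag yourself: the rectification. In the \emph{minimal} case it can be completed with standard facts (equicontinuous plus minimal gives a transitive free action of $G$, so $G$ is a compact connected abelian group homeomorphic to $\bbT^d$, hence topologically isomorphic to $\bbT^d$, the flow is a one-parameter translation, and unique ergodicity makes the conjugacy Lebesgue-preserving), and this already settles the ``only if'' direction, since an equidistributed orbit is dense and density of one orbit plus equicontinuity forces minimality. But the ``if'' direction does not reduce to the minimal case: its contrapositive is ``flow non-minimal $\Rightarrow$ $a$ rationally dependent,'' and here your proposal offers no mechanism. When $G$ does not act transitively, the orbit closures are proper minimal sets ($G$-orbits, compact connected abelian monothetic groups); they need not a priori be subtori sitting in rational homological position, stabilizers of an abelian action can vary from point to point, and even granting that each orbit closure carries a Haar measure whose asymptotic cycle lies in a proper rational subspace of $H_1(\bbT^d;\bbR)$, the Lebesgue mean $a$ is an \emph{average} of these cycles over the ergodic decomposition, and an average of vectors each satisfying \emph{some} integer relation need not satisfy any \emph{single} integer relation (e.g.\ $\tfrac12(1,\sqrt2,0)+\tfrac12(0,0,\sqrt3)$ is rationally independent). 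Ruling this out --- i.e.\ showing that the partition into orbit closures is itself conjugate to a linear fibration of the torus, so that all the component cycles share one common relation --- is precisely the content of Saito's theorem, and it is the part your argument assumes rather than proves.
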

Using the ergodic property, we can derive the following simple corollary to characterize the solution data by tracking the locally unique value in time. 
\begin{corollary}
Let $u_0\in C^1(\Omega)$, suppose every orbit of~\eqref{EQ: DYNAMIC} is ergodic and there exists a point $x_0\in\Omega$ such that for any $y\in\Omega$, $\mathrm{dist}(y,x_0)\in(0, \delta)$, $u_0(y)\neq u_0(x_0)$, 
then $\bc(x)$ is uniquely determined from $u(x,t)$, $t\in [0,\infty)$.
\end{corollary}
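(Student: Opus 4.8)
The plan is to reconstruct $\bc$ explicitly from the observed trajectory by tracking a single distinguished characteristic. Recall from the proof of Lemma~\ref{LEM: REGULAR} that the solution is transported along the flow, $u(x,t) = u_0(Z(t;x))$, where $Z(t;\cdot)$ is the backward flow of~\eqref{EQ: FLOW}; equivalently $u(x,t) = u_0(\Phi_{-t}(x))$, where $\Phi_t$ denotes the forward flow of~\eqref{EQ: DYNAMIC}, i.e. $\tfrac{d}{dt}\Phi_t(y) = \bc(\Phi_t(y))$ with $\Phi_0(y)=y$. Since the data contains the snapshot at $t=0$, we know $u_0 = u(\cdot,0)$ and can locate the point $x_0$ together with the value $v_0 := u_0(x_0)$; by hypothesis $x_0$ is the only point of the ball $B(x_0,\delta)$ at which $u_0$ equals $v_0$. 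The distinguished characteristic is the forward orbit $X(t) := \Phi_t(x_0)$; along it $u$ is constant, $u(X(t),t) = u_0(x_0) = v_0$, so $X(t)$ lies in the level set $E_t := \{x\in\Omega : u(x,t) = v_0\}$ for every $t\ge 0$.

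The crucial step is to show that $X(t)$ can be singled out from the data for all $t$. Because $\bc$ has no zeros and is smooth enough for the flow to be a diffeomorphism of $\Omega$, the identity $u(x,t)=u_0(\Phi_{-t}(x))$ gives $E_t = \Phi_t(E_0)$ with $E_0 = \{y:u_0(y)=v_0\}$. Since $x_0$ is isolated in $E_0 \cap B(x_0,\delta)$ and $\Phi_t$ is a homeomorphism, $X(t)=\Phi_t(x_0)$ is an isolated point of $E_t$, lying in the open set $\Phi_t(B(x_0,\delta))$. On any finite interval $[0,\tau]$, continuity of $(t,y)\mapsto\Phi_t(y)$ and compactness furnish a radius $r=r(\tau)>0$ such that $X(t)$ is the \emph{unique} point of $E_t$ in $B(X(t),r)$. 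Hence there is a unique continuous curve $t\mapsto\xi(t)$ with $\xi(0)=x_0$, $\xi(t)\in E_t$, and $\xi$ staying inside this tube; a standard open--closed continuation argument in $t$ (using the uniform isolation on compact time intervals) shows any such curve coincides with $X(t)$. Thus $X(t)$ is recovered from $u$ alone, as the continuous-in-time family of locally unique $v_0$-level points emanating from $x_0$.

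Once $t\mapsto X(t)$ is known as a function, we recover the field by differentiation: $X$ is $C^1$ and solves~\eqref{EQ: DYNAMIC}, so $\bc(X(t)) = \dot X(t)$ is determined for every $t\ge 0$. Ergodicity of the flow (guaranteed by hypothesis via the cited theorems) implies the orbit $\{X(t): t\ge 0\}$ is dense in $\Omega$, so $\bc$ is pinned down on a dense subset; by continuity of $\bc$ this determines $\bc$ on all of $\Omega$, proving that $\bc$ is uniquely determined by $u(x,t)$, $t\in[0,\infty)$. I expect the main obstacle to be the tracking step of the second paragraph, namely rigorously justifying that the distinguished orbit can be followed unambiguously through the data --- that the local uniqueness of the value $v_0$ near $x_0$ propagates into a uniform spatial separation of $X(t)$ from the remainder of $E_t$ on each compact time interval, so that the continuation $t\mapsto X(t)$ is well defined and equals the true orbit. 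The density and differentiation steps are comparatively routine given the ergodic theorems stated above and the regularity of $\bc$.
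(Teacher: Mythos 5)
Your proof is correct and takes essentially the same route as the paper's: follow the orbit $X(t)$ of the locally unique value $u_0(x_0)$, recover $\bc(X(t))=\dot X(t)$ by differentiation, and use ergodicity (density of the orbit) together with continuity of $\bc$ to determine $\bc$ on all of $\Omega$. The only difference is that you rigorously justify the ``tracing'' step --- via the level sets $E_t=\Phi_t(E_0)$, uniform isolation on compact time intervals, and an open--closed continuation argument --- which the paper asserts in a single informal sentence.
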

\begin{proof}
Let $X(t)$ be the orbit
\begin{equation}
    \dot{X}(t) = \bc(X),\quad X(0) = x_0.
\end{equation}
Then $\bc(X(t))$ is uniquely determined by tracing the location of the local unique value $u_0(x_0)$. By the ergodic property of $X(t)$ and continuity of $\bc(x)$, for any $z\in\Omega$ one can find a sequence $X(t_j)\to z$, $j\to\infty$, which reconstructs $\bc(z)=\lim_{j\to\infty} \bc(X(t_j))$.
\end{proof}

\subsection{Possible instability for identification of elliptic operator}
In this section, we study the stability issue for PDE identification and use the elliptic operator as an example to show possible instability. First, we show local instability when one has a short observation time for a single trajectory. Suppose each coefficient $p_{\alpha}$ is sufficiently smooth, we consider a small perturbation $p_{\alpha}(x)\to p_{\alpha}(x) + \mu f_{\alpha}(x)$ that $|\mu|\ll 1$, then the Frech\'et derivative $w(x,t) = \partial_{\mu} u(x,t)$ satisfies the equation 
\begin{equation}
\begin{aligned}
        \partial_t w(x, t) &= -\sum_{|\alpha|=0}^n p_{\alpha}(x) \partial^{\alpha} w(x,t) + \sum_{|\alpha|=0}^n f_{\alpha}(x) \partial^{\alpha} u(x, t), \\ w(x, 0) &= 0.    
\end{aligned}
\end{equation}
When $\cL = \sum_{|\alpha|=0}^n p_{\alpha}(x)\partial^{\alpha}$ is a dissipative operator, there exists a constant $c > 0$ such that 
\begin{equation}
\begin{aligned}
        \|w(x, t)\|^2_{L^2(\Omega\times[0,T])} &\le c\|  \sum_{|\alpha|=0}^n f_{\alpha} (x)\partial^{\alpha} u(x, t) \|^2_{L^2(\Omega\times [0,T])} \\
        &= c \sum_{0\le |\alpha|, |\beta|\le n} \int_{\Omega} \left( \int_{0}^T \partial^{\alpha} u(x, t){\partial^{\beta} u(x, t)} dt\right) f_{\alpha}(x) {f_{\beta}(x)} dx \,.
\end{aligned}
\end{equation}
Define the functions $K_{\alpha\beta}(x)$ by 
\begin{equation}
    K_{\alpha\beta}(x) := \int_{0}^T \partial^{\alpha} u(x, t) {\partial^{\beta} u(x, t)} dt
\end{equation}
and denote the normalized set $F = \{\{ f_{\alpha} \}_{|\alpha|=0}^n\subset C^n(\Omega;\bbR)\mid \sum_{\alpha}\|f_{\alpha}\|^2_{L^2(\Omega)}=1 \}$, then the local instability amounts to find the minimum
\begin{equation}
    \min_{f_{\alpha}\in F} \sum_{0\le |\alpha|, |\beta|\le n} \int_{\Omega} K_{\alpha\beta}(x) f_{\alpha}(x){f_{\beta}(x)} dx 
\end{equation}
where $K(x) = (K_{\alpha\beta}(x))$ is a positive definite matrix for each $x\in\Omega$. Given any fixed $x\in\Omega$,
\begin{equation}
    \min_{f_{\alpha}(x)}  \sum_{0\le |\alpha|, |\beta| \le n} K_{\alpha\beta}(x) f_{\alpha}(x){f_{\beta}(x)} = {\lambda}_m (x) \sum_{|\alpha|=0}^n |f_{\alpha}(x)|^2\,,
\end{equation}
where $m = \binom{n+d}{d}$ and $\lambda_1\ge \lambda_2\ge \cdots \ge \lambda_m \ge 0$ are the eigenvalues of $K_{\alpha\beta}(x)$, the equality holds when $f = (f_{\alpha}(x))$ is the eigenvector of $K(x)$ for $\lambda_m$. Therefore
\begin{equation}\label{EQ: W}
    \min_{f_{\alpha}\in F} \|w\|^2_{L^2(\Omega\times[0,T])} \le c \int_{\Omega} {\lambda}_m(x) \sum_{|\alpha|=0}^n |f_{\alpha}(x)|^2 dx = c \int_{\Omega} {\lambda}_m(x) dx\,.
\end{equation}
It should be noted that if $\partial^{\alpha} u$ are continuous, then $K_{\alpha\beta}(x)$ varies continuously, therefore ${\lambda}_m(x)$ is also a continuous function over $\Omega$. On the other hand,
use Taylor expansion for short time that $T\ll 1$, the matrix entries of $K$ are
\begin{equation}\label{EQ: K}
\begin{aligned}
K_{\alpha\beta}(x) &= \sum_{k=0}^{m-1} \frac{T^{k+1}}{(k+1)!}(-1)^k \left( \sum_{l=0}^k \binom{k}{l} \partial^{\alpha}\cL^l u_0(x) \partial^{\beta} \cL^{k-l} u_0(x) \right) + \cO(T^{m+1}) \\
&= \sum_{l=0}^{m-1} \partial^{\alpha}\cL^l u_0(x) \left( \sum_{k=l}^{m-1}  \frac{T^{k+1}}{(k+1)!}(-1)^k \binom{k}{l} \partial^{\beta} \cL^{k-l} u_0(x) \right) + \cO(T^{m+1}) \\
&=  \sum_{l=0}^{m-2} \partial^{\alpha}\cL^l u_0(x) \left( \sum_{k=l}^{m-1}  \frac{T^{k+1}}{(k+1)!}(-1)^k \binom{k}{l} \partial^{\beta} \cL^{k-l} u_0(x) \right) \\
&\quad +  \frac{T^{m}}{m!}(-1)^{m-1}  \partial^{\alpha}\cL^{m-1} u_0(x) \partial^{\beta} u_0(x) + \cO(T^{m+1}).
\end{aligned}
\end{equation}
For each $0\le l\le m-2$ in the summation, the matrix formed by the entry
\begin{equation}
    K_{\alpha\beta}^l(x) := \partial^{\alpha}\cL^l u_0(x) \left( \sum_{k=l}^{m-2}  \frac{T^{k+1}}{(k+1)!}(-1)^k \binom{k}{l} \partial^{\beta} \cL^{k-l} u_0(x) \right) 
\end{equation}
is rank one, therefore the summation of the terms $0\le l\le m-2$ of~\eqref{EQ: K} is at most rank $m-1$, then by the Theorem VI.3.3 of~\cite{bhatia2013matrix},  
\begin{equation}
    \lambda_{m}(x) \le \frac{T^m}{m!}\|M(x)\| + \cO(T^{m+1}),
\end{equation}
where $M(x)$ is the rank one matrix defined by $M_{\alpha\beta} (x):= \partial^{\alpha}\cL^{m-1} u_0(x) \partial^{\beta} u_0(x) $, then the norm $\|M(x)\|$ is bounded by 
\begin{equation}
    \|M(x)\|\le \left(\sum_{|\alpha|=0}^n |\partial^{\alpha} u_0(x)|^2\right)^{1/2} \left(\sum_{|\alpha|=0}^n |\partial^{\alpha}\cL^{m-1} u_0(x)|^2\right)^{1/2}.
\end{equation}
Combine above estimate with~\eqref{EQ: W} and Cauchy-Schwartz inequality, we obtain 
\begin{equation}\label{EQ: ESTIMATE INST}
     \min_{f_{\alpha}\in F} \|w\|^2_{L^2(\Omega\times[0,T])} \le \frac{cT^{m}}{m!}  \|\cL^{m-1} u_0\|_{W^{n,2}(\Omega)} \| u_0\|_{W^{n,2}(\Omega)}  + \cO(T^{m+1}).
\end{equation}
\begin{remark}
In fact, one can show $\lambda_k \le \cO(T^{k})$, $1\le k\le m$ by following the similar argument. Furthermore, from the estimate~\eqref{EQ: ESTIMATE INST}, one can observe that $\cL^{m-1} u_0$ plays a role. Indeed, if $\cL^{m-1} u_0 = 0$, then the solution is simply
\begin{equation}
    u(x, t) = \sum_{l=0}^{m-1} (-1)^l\frac{t^l}{l!} \cL^l u_0(x)
\end{equation}
which means the matrix formed by $A_{\alpha, k}:=\partial^{\alpha} u(x, t_k)$, $k=1,2,\dots, m$,
\begin{equation}
    \partial^{\alpha} u(x, t_k) = \sum_{l=0}^{m-1} (-1)^l\frac{t_k^l}{l!} \partial^{\alpha}\cL^l u_0(x)
\end{equation}
has a rank at most $m-1$. Hence the identification is non-unique.
\end{remark}

Now we show instability for high-frequency perturbations for elliptic operators. For the sake of simplicity, in the following we let $\cL = -\sum_{\alpha = 0}^n p_{\alpha}(x) \partial^{\alpha}$ be an elliptic differential operator in 1D where $p_{\alpha}(x)$ are constant functions. Consider the perturbation  $\widetilde{\cL} = \cL- \delta e^{i q  \cdot x } \partial^{\alpha'}$, where $q \in \bbZ$ is some frequency and $\alpha'$ is certain index. Denote $u$ and $\tilde u$ the solutions to~\eqref{EQ: MODEL} with respect to the operators $\cL$ and $\widetilde\cL$, respectively. Then we have
\begin{equation}
    u(x, t) = \sum_{k\in\bbZ }\phi_k(t) e^{ik\cdot x},
\end{equation}
where $\lambda_k = -\sum_{\alpha=0}^n p_{\alpha} (i k)^{\alpha}$, $\phi_k(t) = c_k e^{-\lambda_k t}$, and the constant $c_k$ is the Fourier coefficient of $u_0$ . In the following, we study the instability of identification for large $|q|$ under the assumptions that $\Re \lambda_k \ge C_1 \aver{k}^{n}$ and $|c_k|\le C_2 \aver{k}^{-\beta}$, $\beta > \frac{1}{2}$, where $C_1, C_2$ are two positive constants and $\aver{k}:= (1+k^2)^{1/2}$. Using Fourier transform, we may write the solution $\tilde{u}$ in the form
\begin{equation}
    \tilde{u}(x, t) = \sum_{k\in\bbZ}  \widetilde \phi_k(t) e^{ik\cdot x},
\end{equation}
where $ \widetilde  \phi_k(t)$ satisfies the coupled ODE, 
\begin{equation}
   \frac{d}{dt} \widetilde  \phi_k(t) = -\lambda_k \widetilde  \phi_k(t) +  \delta  (i(k-q))^{\alpha'}  \widetilde  \phi_{k-q}(t).
\end{equation}
The solution can be written as 
\begin{equation}
    \widetilde \phi_k(t) = e^{- \lambda_k t} \left(c_k + \int_0^t e^{\lambda_k s}  \delta  (i(k-q))^{\alpha'}  \widetilde  \phi_{k-q}(s) ds \right),
\end{equation}
since $c_k =  \widetilde \phi_k(0)$. Therefore using Cauchy-Schwartz inequality, we obtain the estimate
\begin{equation}\label{EQ: INS BOUND}
\begin{aligned}
    \int_0^T |k|^{2\alpha'} |\widetilde \phi_k(t)|^2 dt& \le 2 |c_k|^2  \int_0^T |k|^{2\alpha'} e^{-2 \Re \lambda_k t} dt  \\ &\quad  + 2\delta^2 |k|^{2\alpha'} \int_0^T  e^{-2 \Re \lambda_k t} \int_0^t e^{2\Re \lambda_k s} ds \int_0^t |k-q|^{2\alpha'} |\widetilde\phi_{k-q}(s)|^2 ds dt  \\
    &\le \frac{|k|^{2\alpha'}}{\Re \lambda_k} (C_2)^2 \aver{k}^{-2\beta}+  \frac{|k|^{2\alpha'} \delta^2 T}{\Re \lambda_k }  \int_0^T |k-q|^{2\alpha'} |\widetilde\phi_{k-q}(s)|^2 ds \,.
\end{aligned}
\end{equation}
\begin{lemma}\label{LEM: INS INEQ}
Suppose $0\le \alpha' \le \frac{n}{2}$, $\beta > \frac{1}{2}$, and $\delta^2 T < \gamma C_1$ for some $\gamma \in (0,1)$, then 
\begin{equation}
    \sum_{k\in\bbZ} \int_0^T |k|^{2\alpha'} |\widetilde \phi_k(t)|^2 dt \le \frac{(C_2)^2}{C_1}\frac{1}{1-\gamma} \sum_{k\in\bbZ} \aver{k}^{2\alpha' -2\beta - n}.
\end{equation}
If $kq \le 0$, we would have 
\begin{equation}
     \int_0^T |k|^{2\alpha'} |\widetilde \phi_k(t)|^2 dt \le \frac{1}{1-\gamma} \aver{k}^{2\alpha' -2\beta -n}.
\end{equation}
\end{lemma}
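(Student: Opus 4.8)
The plan is to turn the integral bound~\eqref{EQ: INS BOUND} into a scalar recursion in the index $k$ and then either sum it or iterate it. Writing $a_k := \int_0^T |k|^{2\alpha'}|\widetilde\phi_k(t)|^2\,dt$, I first extract two uniform estimates from the hypotheses $\Re\lambda_k\ge C_1\aver{k}^n$ and $0\le\alpha'\le n/2$: namely $\frac{|k|^{2\alpha'}}{\Re\lambda_k}\le \frac{1}{C_1}\aver{k}^{2\alpha'-n}\le\frac{1}{C_1}$, where the last inequality uses $2\alpha'-n\le 0$. Substituting these, together with $|c_k|\le C_2\aver{k}^{-\beta}$, into~\eqref{EQ: INS BOUND} (and recognizing the time integral on its right-hand side as exactly $a_{k-q}$) reduces it to
\begin{equation}
a_k \le \frac{(C_2)^2}{C_1}\aver{k}^{2\alpha'-2\beta-n} + \frac{\delta^2 T}{C_1}\,a_{k-q}.
\end{equation}
Since $\delta^2T<\gamma C_1$ the coupling constant obeys $\delta^2T/C_1<\gamma<1$, so with $b_k:=\frac{(C_2)^2}{C_1}\aver{k}^{2\alpha'-2\beta-n}$ I obtain the master recursion $a_k\le b_k+\gamma\,a_{k-q}$.

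For the first (summed) estimate I sum this recursion over all $k\in\bbZ$. Setting $S:=\sum_{k}a_k$ and using the shift invariance $\sum_{k}a_{k-q}=\sum_{k}a_k=S$ (the sum runs over every integer and $q$ is fixed), I get $S\le \sum_k b_k+\gamma S$, i.e.\ $(1-\gamma)S\le \frac{(C_2)^2}{C_1}\sum_k\aver{k}^{2\alpha'-2\beta-n}$, which is the stated bound. The series on the right is finite precisely because $2\alpha'-2\beta-n<-1$: indeed $2\alpha'\le n$ while $\beta>\tfrac12$ forces $2\beta+n-1>n$, so the exponent lies below $-1$.

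For the second estimate I do not sum but iterate along the single backward orbit $k,\,k-q,\,k-2q,\dots$. When $kq\le 0$ the sign condition propagates, $(k-jq)q\le 0$ for all $j\ge0$, so the orbit moves monotonically away from the origin and $\aver{k-jq}\ge\aver{k}$, whence $b_{k-jq}\le b_k$. Iterating $a_k\le b_k+\gamma a_{k-q}$ then gives $a_k\le\sum_{j=0}^{\infty}\gamma^j b_{k-jq}\le b_k\sum_{j\ge0}\gamma^j=\frac{b_k}{1-\gamma}$, which is the claimed bound (up to the constant factor $(C_2)^2/C_1$).

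The one genuinely delicate point — the step I would treat most carefully — is the legitimacy of the self-referential algebra: subtracting $\gamma S$ from both sides requires knowing $S<\infty$ beforehand, and truncating the iteration requires the tail $\gamma^N a_{k-Nq}$ to vanish as $N\to\infty$. Both facts follow from a standard parabolic a priori bound for the perturbed operator $\widetilde\cL$. Since $\alpha'\le n/2<n$, the perturbation $-\delta e^{iqx}\partial^{\alpha'}$ is of lower order and $\widetilde\cL$ inherits the strongly elliptic principal symbol of $\cL$, so the smoothing estimate $\int_0^T\|\tilde u(\cdot,t)\|_{H^{\alpha'}}^2\,dt\le C\|u_0\|_{L^2}^2$ holds; this bounds $S=\sum_k a_k$ and in particular forces $\sup_k a_k<\infty$, so that $\gamma^N a_{k-Nq}\le\gamma^N\sup_m a_m\to0$. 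Once this finiteness is secured, the summation and iteration arguments above are routine.
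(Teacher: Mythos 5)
Your proof follows essentially the same route as the paper's: summing \eqref{EQ: INS BOUND} over $k\in\bbZ$ and absorbing the $\gamma S$ term for the first bound, then iterating the recursion along $k,\,k-q,\,k-2q,\dots$ together with $|k-lq|\ge|k|$ when $kq\le 0$ for the second. Your added care about the a priori finiteness of $S$ and the vanishing of the iteration tail addresses a point the paper glosses over, and your remark that the second bound should carry the factor $(C_2)^2/C_1$ is consistent with what the paper's own proof actually derives (the lemma statement omits it).
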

\begin{proof}
The first inequality can be easily proved by summation of~\eqref{EQ: INS BOUND} on both sides over $k\in\bbZ$. For the second inequality, by iteratively using~\eqref{EQ: INS BOUND}, we find that 
\begin{equation}
     \int_0^T |k|^{2\alpha'} |\widetilde \phi_k(t)|^2 d t \le \frac{(C_2)^2}{C_1} \sum_{l=0}^{\infty} \gamma^l \aver{k - l q}^{2\alpha' - 2\beta - n}.
\end{equation}
If $k q \le 0$, we would have $|k - l q|\ge |k|$ for all $l\ge 0$, then 
\begin{equation}
    \sum_{l=0}^{\infty} \gamma^l \aver{k - l q}^{2\alpha' - 2\beta - n} \le \sum_{l=0}^{\infty} \gamma^l \aver{k}^{2\alpha'  - 2\beta - n} = \aver{k}^{2\alpha'-2\beta-n} \frac{1}{1-\gamma}.
\end{equation}
\end{proof}
\begin{theorem}
Assuming the conditions of Lemma~\ref{LEM: INS INEQ}, the following inequality holds for any $q\in\bbZ$,
\begin{equation}\nonumber
    \|u(x, t) - \tilde{u}(x ,t)\|_{L^2(\Omega\times [0,T])}^2 \le \frac{\gamma}{2(1-\gamma)} \left( \aver{q}^{-n} (2^n   + \frac{(C_2)^2}{C_1})C_3 + \aver{q}^{2\alpha' - 2\beta - n} 2^{n+2\beta - 2\alpha'}  C_4 \right),
\end{equation}
where $C_3 = \sum_{k\in\bbZ} \aver{k}^{2\alpha' - 2 \beta - n}$ and $C_4 =  \sum_{k\in\bbZ} \aver{k}^{- n}$. 
\end{theorem}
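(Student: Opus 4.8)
The plan is to work entirely on the Fourier side and reduce the $L^2(\Omega\times[0,T])$ norm of the difference to a weighted sum of the quantities already controlled by Lemma~\ref{LEM: INS INEQ}. First I would invoke Parseval's identity to write $\|u-\tilde u\|_{L^2(\Omega\times[0,T])}^2=\sum_{k\in\bbZ}\int_0^T|\phi_k(t)-\widetilde\phi_k(t)|^2\,dt$. Since $\phi_k(t)=c_k e^{-\lambda_k t}$ and $c_k=\widetilde\phi_k(0)$, the Duhamel representation of $\widetilde\phi_k$ gives $\phi_k(t)-\widetilde\phi_k(t)=-e^{-\lambda_k t}\int_0^t e^{\lambda_k s}\delta(i(k-q))^{\alpha'}\widetilde\phi_{k-q}(s)\,ds$. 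Applying Cauchy--Schwarz in $s$, bounding $\int_0^t e^{2\Re\lambda_k s}\,ds\le e^{2\Re\lambda_k t}/(2\Re\lambda_k)$, and then using $\Re\lambda_k\ge C_1\aver{k}^n$ together with the hypothesis $\delta^2 T<\gamma C_1$, I obtain the pointwise-in-$k$ estimate
\begin{equation}\nonumber
\int_0^T|\phi_k(t)-\widetilde\phi_k(t)|^2\,dt\le\frac{\gamma}{2\aver{k}^n}\int_0^T|k-q|^{2\alpha'}|\widetilde\phi_{k-q}(s)|^2\,ds.
\end{equation}

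Summing over $k$ and re-indexing by $j=k-q$ turns the right-hand side into $\frac{\gamma}{2}\sum_{j\in\bbZ}\aver{j+q}^{-n}I_j$, where $I_j:=\int_0^T|j|^{2\alpha'}|\widetilde\phi_j(s)|^2\,ds$ is exactly the weighted energy estimated in Lemma~\ref{LEM: INS INEQ}. I would then split the sum according to the sign of $jq$. On the region $jq>0$ the two indices have the same sign, so $\aver{j+q}\ge\aver{q}$; pulling out $\aver{q}^{-n}$ and applying the global bound $\sum_j I_j\le\frac{(C_2)^2}{C_1}\frac{1}{1-\gamma}C_3$ produces the $\frac{(C_2)^2}{C_1}$ contribution to the first term.

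The region $jq\le 0$ is where the care is needed and constitutes the main obstacle, because there $\aver{j+q}$ may be as small as $1$ (when $j\approx-q$), so the weight $\aver{j+q}^{-n}$ cannot simply be replaced by $\aver{q}^{-n}$. Here I would use the \emph{pointwise} estimate of Lemma~\ref{LEM: INS INEQ}, namely $I_j\le\frac{1}{1-\gamma}\aver{j}^{2\alpha'-2\beta-n}$, valid precisely because $jq\le 0$, and split once more according to whether $|j+q|\ge|q|/2$ or $|j+q|<|q|/2$. In the first case $\aver{j+q}\ge\tfrac12\aver{q}$ gives $\aver{j+q}^{-n}\le 2^n\aver{q}^{-n}$, and summing $\aver{j}^{2\alpha'-2\beta-n}$ freely over $\bbZ$ yields the $2^n\aver{q}^{-n}C_3$ part of the first term. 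In the second case the reverse triangle inequality forces $|j|>|q|/2$, hence $\aver{j}>\tfrac12\aver{q}$; since $0\le\alpha'\le n/2$ and $\beta>\tfrac12$ guarantee the exponent $2\alpha'-2\beta-n<0$, monotonicity gives $\aver{j}^{2\alpha'-2\beta-n}\le 2^{n+2\beta-2\alpha'}\aver{q}^{2\alpha'-2\beta-n}$, and the residual weight sum $\sum_{|j+q|<|q|/2}\aver{j+q}^{-n}$ is bounded by $C_4=\sum_{k\in\bbZ}\aver{k}^{-n}$. Adding the three contributions and carrying the common prefactor $\frac{\gamma}{2(1-\gamma)}$ reproduces the claimed inequality. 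The only genuinely delicate points are arranging the sign-based decomposition so that exactly one of $\aver{j+q}\ge\aver{q}$ or $\aver{j}\ge\tfrac12\aver{q}$ is available on each piece, and verifying that the negativity of $2\alpha'-2\beta-n$ lets monotonicity convert the weight $\aver{j}$ into $\aver{q}$ with the stated power of $2$.
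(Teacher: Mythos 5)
Your proposal is correct and follows essentially the same route as the paper's proof: Parseval plus the Duhamel formula and Cauchy--Schwarz to get the pointwise-in-$k$ bound with weight $\gamma/(2\aver{k}^n)$, the index shift $j=k-q$, the split by the sign of $jq$ (summed Lemma bound for like signs, pointwise Lemma bound for opposite signs), and the final dyadic split $|j+q|\gtrless|q|/2$ producing the $2^n\aver{q}^{-n}C_3$ and $2^{n+2\beta-2\alpha'}\aver{q}^{2\alpha'-2\beta-n}C_4$ terms. No gaps; the constants and exponents match the stated inequality.
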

\begin{proof}
Let $w_k = \phi_k(t) - \widetilde \phi_k(t)$, then 
\begin{equation}
    w_k(t) = e^{-\lambda_k t} \delta (i(k-q))^{\alpha'} \int_0^t e^{\lambda_k s} \widetilde \phi_{k-q}(s) ds 
\end{equation}
and note that $\delta^2 T \le \gamma C_1$, 
\begin{equation}
\begin{aligned}
   & \|u(x, t) - \tilde{u}(x ,t)\|_{L^2(\Omega\times [0,T])}^2 = \sum_{k\in\bbZ } \int_0^T |w_k(t)|^2 dt \\
    &\le \sum_{k\in\bbZ}   \frac{\gamma C_1}{2\Re \lambda_k} |k-q|^{2\alpha'} \int_0^T |\widetilde\phi_{k-q}(s)|^2 ds  \\
    &\le \sum_{k\in\bbZ} \frac{\gamma}{2\aver{k + q}^{n}}|k|^{2\alpha'} \int_0^T |\widetilde\phi_{k}(s)|^2 ds \\
    & = \sum_{k q \le 0} \frac{\gamma}{2\aver{k + q}^{n}}|k|^{2\alpha'} \int_0^T |\widetilde\phi_{k}(s)|^2 ds +  \sum_{k q > 0} \frac{\gamma}{2\aver{k + q}^{n}}|k|^{2\alpha'} \int_0^T |\widetilde\phi_{k}(s)|^2 ds \\
    &\le \frac{\gamma}{(1-\gamma)} \sum_{k q \le 0} \frac{1}{2\aver{k + q}^{n}} \aver{k}^{2\alpha' - 2\beta - n} + \frac{\gamma}{2\aver{q}^n}  \sum_{k\in\bbZ} |k|^{2\alpha'} \int_0^T |\widetilde\phi_{k}(s)|^2 ds \\
    & \le \frac{\gamma}{(1-\gamma)} \sum_{k q \le 0} \frac{1}{2\aver{k + q}^{n}} \aver{k}^{2\alpha' - 2\beta - n} +   \frac{(C_2)^2}{C_1}\frac{\gamma}{2(1-\gamma)\aver{q}^n}  \sum_{k\in\bbZ}\aver{k}^{2\alpha' - 2\beta - n} \\
    & \le \frac{\gamma}{2(1-\gamma)} \left( 2^n \aver{q}^{-n} C_3 + 2^{n+2\beta - 2\alpha'} \aver{q}^{2\alpha' - 2\beta - n} C_4 + \aver{q}^{-n} (C_2)^2 C_3/C_1   \right).
\end{aligned}
\end{equation}
The last inequality uses the fact that $|k+q|\le \frac{|q|}{2}$ implies $|k| \ge  \frac{|q|}{2}$ when $kq \le 0$ and \emph{vice versa}, therefore
\begin{equation}
\begin{aligned}
    \sum_{k q \le 0, |k + q|\le |q|/2} \frac{1}{\aver{k + q}^{n}} \aver{k}^{2\alpha' - 2\beta - n} &\le \sum_{k\in\bbZ} \frac{1}{\aver{k+q}^n} \aver{\frac{q}{2}}^{2\alpha' -2 \beta -n}, \\
  \sum_{k q \le 0, |k + q| > |q|/2} \frac{1}{\aver{k + q}^{n}} \aver{k}^{2\alpha' - 2\beta - n} &\le \sum_{k\in\bbZ} \aver{\frac{q}{2}}^{-n} \aver{k}^{2\alpha' - 2\beta -n}.
\end{aligned}
\end{equation}
\end{proof}
\begin{remark}
From the assumption in the above theorem, one can view $\gamma=\cO(\delta^2)$. The result shows that the high-frequency component of perturbation will have a limited impact on the solution. In particular, both the order of the differential operator and the smoothness of the initial data affect the instability estimate. The above analysis can also possibly be extended to high dimensional and smooth coefficient cases.
\end{remark}

\section{Local regression and global consistency enforced PDE identification method}
\label{SEC: algorithm}
For PDE identification in practice, the first and most important task is to robustly identify the PDE type using a combination of the fewest possible candidates from a (relatively large) dictionary with minimal data, e.g. a single trajectory (corresponding to unknown and uncontrollable initial data) with local (in space and time) measurements. When the PDE has variable coefficients, this amounts to different regression problems at different measurement locations. Since a variable coefficient may be degenerate or small at certain locations, e.g., certain components of a velocity field, independent local regressions may render different types of PDEs at different locations. The key idea is to enforce consistency and sparsity across local regressions to enhance both robustness and accuracy. Here we propose a local regression and global consistency enforced strategy, the Consistent and Sparse Local Regression (CaSLR) method, which enforces global consistency of the differential operator and involves as few terms as possible from the dictionary while having the flexibility to fit local measurements as well as possible. Numerical tests show that CaSLR can identify PDE successfully even with a small amount of local data. 

\subsection{Proposed PDE identification method}
Suppose the solution data can be observed/measured by local sensors in a neighborhood at different locations, i.e., local patches,  that can be used to approximate the solution derivatives and their functions corresponding to those terms in the dictionary. Once the measured/computed solution and its derivatives are available at certain locations we propose the following local regression and global consistency-enforced PDE identification method.

Assume that the unknown PDE takes the following form:
\begin{align}
u_t(x,t) = \sum_{k=1}^Kc_{k}(x,t)f_k(x,t),\label{eq_PDEform}
\end{align}
where $\mathcal{F} = \{f_k:\Omega\times [0,T]\to\mathbb{R}\}_{k=1}^K$ is a dictionary of features that contains partial derivatives of $u$ with respect to the space (e.g., $u_x,u_{xxx}$, etc.), functions of these terms (e.g., $uu_x$ and $u^2$, $\sin (u)$ etc.); and  $c_k:\Omega\times [0,T]\to\mathbb{R}$, $k=1,2,\dots, K$ are the respective coefficients.  We assume that the dictionary is rich enough that it is over-complete, i.e., the underlying PDE is expressed as~\eqref{eq_PDEform} when at least one of the feature's coefficients is null. One also has to assume the measurements can resolve local variation of the coefficients, i.e., one can approximate the PDE coefficients by constants in each small patch neighborhood (in space and time) centered at $(x_j,t_j), j=1, 2, \ldots, J$. Otherwise, the measurement data is not enough to identify the PDE. Denote $\Omega_j$ to be the local neighborhood centered at $(x_j,t_j)$ and $\hat{\mathbf{c}}_j=(\hat{c}_1^j,\ldots, \hat{c}_K^j))$, we define the local regression error in each patch as 
\begin{align}
\mathcal{E}_{loc}^j(\hat{\mathbf{c}}_j) = \sum_{(x_{j,m},t_{j,m})\in \Omega_j}\left(u_t(x_{j,m},t_{j,m}) -{\sum_{k=1}^K} \hat{c}_k^jf_k(x_{j,m},t_{j,m})\right)^2.
\label{eq_loc}
\end{align}
Define the global regression error as 
\begin{equation}
\label{eq:global}
\mathcal{E}(\widehat{\mathbf{c}}) = \sum_{j=1}^J \mathcal{E}_{loc}^j(\hat{\mathbf{c}}_j),  \quad \widehat{\mathbf{c}}=[\hat{\mathbf{c}}_1, \ldots, \hat{\mathbf{c}}_J]\,.
\end{equation}

For each $l=1,2,\dots, K$, we search for $l$ terms from the dictionary $\mathcal{F}$ whose linear combination using $\hat{\mathbf{c}}_j$  minimizes the global fitting error. In particular, we optimize
\begin{equation}
\begin{aligned}
  &\widehat{\mathbf{c}}^{l} = \arg\min_{\widehat{\mathbf{c}}}\mathcal{E}(\widehat{\mathbf{c}})\\
  &\text{subject to: }\quad \|\widehat{\mathbf{c}}\|_{\text{Group}-\ell_0} = l
\end{aligned}
\end{equation}
using the Group Subspace Pursuit (G-SP) algorithm proposed in~\cite{he2022gppde}. As a generalization of the well-known subspace pursuit~\cite{dai2009subspace}, G-SP sequentially chooses $K$ groups of  variables from the pool of candidates that best fit the residual obtained from the previous iteration. 
Here the group sparsity is defined by
\begin{equation}\nonumber
\|\widehat{\mathbf{c}}\|_{\text{Group}-\ell_0} = \|(\|\tilde{\mathbf{c}}_1\|_1,\dots,\|\tilde{\mathbf{c}}_K\|_1)\|_0,\\
\end{equation}
where $\tilde{\mathbf{c}}_k=(\widehat{c}_k^1,\dots,\widehat{c}_k^J)\in\mathbb{R}^J, k=1,2,\dots,K$. As we increase the sparsity level $l$, $\mathcal{E}(\widehat{\mathbf{c}}^{l})$  decreases. Setting $\widehat{\mathbf{c}}^0 = [0,\dots,0]$, we employ the model score
\begin{align}
S^{l} =  \mathcal{E}(\widehat{\mathbf{c}}^{l})+\rho\frac{l}{K}\label{eq_modelscore}
\end{align}
for $l=1,2,\dots, K-1$, where $\rho>0$ is a penalty parameter for using more terms from the dictionary. We decide that the candidate with $l^*$ features is the optimal if $S^{l^*}=\min_{l=1,\dots,K-1}S^l$. The metric $S^k$ in~\eqref{eq_modelscore} evaluates a candidate model with $l$ features by considering two factors: the fitting error and the model complexity penalty controlled by the parameter $\rho$.  In this work, we fix $\rho$ to be the mean of 
$\{\mathcal{E}(\widehat{\mathbf{c}}^{k})\}_{k=1}^K$ so that the two components of~\eqref{eq_modelscore} are balanced. We find it applicable for both clean and noisy data.  

Once the PDE operator type is identified, one may use various regression techniques to each patch locally to refine the recovery of those coefficients. In case there are sufficiently many sensors densely distributed in the space, one may employ different types of regularizations to further improve the reconstruction, e.g., TV-norm. We leave such explorations to future works and focus on situations where few sensors are used. 

\subsection{Identification guarantee by local regression}
\label{sec:local-regression}
In this section, we show that local patch regression using operators with constant coefficients approximation indeed can identify the underlying PDE with variable coefficients under these conditions: 1) the variable coefficients are bounded away from zero and vary slowly on the patch, 2) the solution data contain diverse information content, on the patch. The first condition basically requires the mathematical problem to be well-posed and the second condition says the local regression system determined by the solution data is well-conditioned.
    
First, we define the admissible set for the coefficients $\cC_{\eps} \subset \bbR$ by 
\begin{equation}
    \cC_{\eps} := (-\infty, \eps)\cup\{0\}\cup  (\eps, \infty),\quad \eps > 0, 
\end{equation}
which means the non-zero coefficients in the unknown PDE should be above a certain threshold $\eps$ on a local patch. The local regression-based identification problem is to find what terms in the PDE operator those nonzero coefficients correspond to. We also introduce the condition constant of the solution data corresponding to the set of candidates in the dictionary $\cF=\{f_k\}_{k=1}^K$ in a neighborhood $B\subset \Omega$,
\begin{equation}\label{eq:constant}
   {\cK}_{B}^{\cF} := \frac{\inf_{\|\hat \bc\|_{\infty}= 1}  \left\|\sum_{k=1}^{K} \hat c_{k}  f_k(y, \tau)\right\|_{L^{\infty}(B)} }{ \sup_{\|\hat \bc\|_{\infty}= 1}  \left\|\sum_{k=1}^{K} \hat c_{k}  f_k(y, \tau)\right\|_{L^{\infty}(B)} },
\end{equation}
where $\hat \bc = (\hat c_{k})\in \bbR^{K}$, $k=1,2,\dots, K$. We say a vector $\hat\bc \in \cC_{\eps}$ if each element of $\hat\bc$ is in $\cC_{\epsilon}$.
\begin{theorem}\label{THM: 4.1}
Suppose the coefficients $c_k(x,t)$ in~\eqref{eq_PDEform} are admissible at some point $(x_0, t_0)$ and take $B$ as a neighborhood of $(x_0, t_0)$. Let $\cA = \{k\mid c_k\neq 0\}$ be the index set for nonzero coefficients at $(x_0, t_0)$.
Let $\{\overline{c}_{k}^{\ast}\}_{k=1}^K$ be an optimal set of constant coefficients such that
\begin{equation}
    \{\overline{c}_{k}^{\ast}\}_{k=1}^K = \argmin_{\overline{c}_{k} \in\,\cC_{\eps}}  \left\| \sum_{k=1}^K \left(\overline{c}_{k} - c_{k}(y, \tau) \right)  f_k(y, \tau)\right\|_{L^{\infty}(B)} ,
\end{equation}
If we denote $\cA^{\ast} = \{k \mid \overline{c}_k^{\ast}\neq 0\}$, then $\cA =\cA^{\ast}$ if the patch size $R = \text{diam}(B)$ satisfies
\begin{equation}\label{EQ: INEQ}
 {\cK}_B^{\cF} > \frac{2  L R}{\eps},
\end{equation}
where $L$ is the Lipschitz constant for all $c_{k}(x,t)$.
\end{theorem}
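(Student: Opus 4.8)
First I would record what admissibility buys on the patch. Since each $c_k$ is Lipschitz (hence continuous) and takes values in $\cC_{\eps}=\{0\}\cup\{|c|>\eps\}$, which has a gap $0<|c|\le\eps$, a connectedness argument on $B$ gives the dichotomy: for $k\in\cA^{\complement}$ one has $c_k\equiv 0$ on $B$, while for $k\in\cA$ one has $|c_k(y,\tau)|>\eps$ throughout $B$. Consequently the data satisfy $u_t(y,\tau)=\sum_{k\in\cA}c_k(y,\tau)f_k(y,\tau)$ on $B$. Abbreviating the denominator and numerator of $\cK_B^p$ by $M_B$ and $m_B$ (so $\cK_B^p=m_B/M_B$), homogeneity of the $\ell^\infty$-normalization yields, for every \emph{constant} vector $v\in\bbR^K$,
\[
m_B\,\|v\|_\infty \;\le\; \Big\|\sum_{k=1}^K v_k f_k\Big\|_{L^p(B)} \;\le\; M_B\,\|v\|_\infty .
\]

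\textbf{Freezing the coefficients and splitting off the variation.}
For a feasible constant vector $\overline{\bc}$, write the objective $E(\overline{\bc})=\big\|\sum_k(\overline c_k-c_k(y,\tau))f_k\big\|_{L^p(B)}$ and freeze each true coefficient at the center, $c_k(y,\tau)=c_k(x,t)+\delta_k(y,\tau)$ with $|\delta_k(y,\tau)|\le L R$ by the Lipschitz bound and the definition of $R$. This expresses the residual as a constant-coefficient dictionary combination with coefficients $w_k=\overline c_k-c_k(x,t)$ for $k\in\cA$ and $w_k=\overline c_k$ for $k\in\cA^{\complement}$, \emph{minus} the variation term $\sum_{k\in\cA}\delta_k f_k$ whose $L^p(B)$ norm I call $D$. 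Combining with the two-sided bound and the triangle inequality gives, for every feasible $\overline{\bc}$,
\[
m_B\,\|w\|_\infty - D \;\le\; E(\overline{\bc}) \;\le\; M_B\,\|w\|_\infty + D .
\]

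\textbf{Comparison and conclusion.}
Applying the upper estimate to the frozen-center candidate $\overline c_k=c_k(x,t)$ on $\cA$ and $\overline c_k=0$ on $\cA^{\complement}$ (feasible, since $|c_k(x,t)|>\eps$) makes $w=0$, so $E(\overline{\bc}^{(0)})\le D$; by optimality of $\overline{\bc}^{\ast}$, $E(\overline{\bc}^{\ast})\le D$. Feeding this into the lower estimate for $\overline{\bc}^{\ast}$ gives $m_B\|w^\ast\|_\infty\le 2D$, and since $w^\ast_k=\overline c_k^{\ast}$ on $\cA^{\complement}$ we obtain $\max_{k\in\cA^{\complement}}|\overline c_k^{\ast}|\le 2D/m_B$. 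Using the key estimate $D\le L R\,M_B$ (see below) yields $\max_{k\in\cA^{\complement}}|\overline c_k^{\ast}|\le 2LR\,M_B/m_B=2LR/\cK_B^p$, which is strictly below $\eps$ by the hypothesis $\cK_B^p>2LR/\eps$. Since $\overline{\bc}^{\ast}\in\cC_{\eps}$, each such coordinate is either $0$ or exceeds $\eps$ in magnitude; being strictly below $\eps$ forces $\overline c_k^{\ast}=0$ for all $k\in\cA^{\complement}$.

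\textbf{Main obstacle.}
The delicate point is the estimate $D=\big\|\sum_{k\in\cA}\delta_k f_k\big\|_{L^p(B)}\le LR\,M_B$, i.e.\ controlling a \emph{variable}-coefficient dictionary combination by the \emph{constant}-coefficient supremum $M_B$ that defines $\cK_B^p$. The pointwise Lipschitz bound $|\delta_k|\le LR$ alone only gives $D\le LR\,\big\|\sum_{k\in\cA}|f_k|\big\|_{L^p(B)}$, and this last quantity dominates $M_B$ rather than equals it; bridging this gap — by exploiting that the variation across a small patch is mild and that freezing at the center reduces the effective coefficient vector to the constant regime — is where the argument must be handled with care. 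Everything else (the connectedness dichotomy, the two-sided bound, and the feasibility step) is routine once this estimate is in place.
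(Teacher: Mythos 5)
Your proposal follows the paper's proof essentially step for step: freeze the true coefficients at the patch center to form the feasible comparison vector ($\tilde c_k=c_k(x,t)$ on $\cA$, $0$ off $\cA$), split the residual by the triangle inequality into a constant-coefficient part and a variation part, bound the constant-coefficient part from below via homogeneity and the numerator of $\cK_B^p$, bound the variation part by $LR$ times the denominator, and combine with optimality and the gap in $\cC_{\eps}$. The only cosmetic differences are that the paper runs the final comparison as a contradiction while you argue directly, and that your connectedness dichotomy (each $c_k$ is either identically $0$ or of modulus $>\eps$ throughout $B$) makes precise something the paper leaves implicit when it restricts the variation term to $k\in\cA$.

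The substantive point is the estimate you isolate as the ``main obstacle,'' namely $D=\bigl\|\sum_{k\in\cA}\delta_k f_k\bigr\|_{L^p(B)}\le LR\,M_B$ with $\delta_k(y,\tau)=c_k(x,t)-c_k(y,\tau)$. You should know that the paper does not prove this step either: in its displayed chain it passes from $\bigl\|\sum_{k\in\cA}(\tilde c_k-c_k(y,\tau))f_k\bigr\|_{L^p(B)}$ to $LR\,\sup_{\|\hat\bc\|_\infty=1}\bigl\|\sum_{k\in\cA}\hat c_k f_k\bigr\|_{L^p(B)}$ in a single line, with exactly the justification gap you describe. Your worry is well founded as far as the naive route goes: the pointwise bound only yields $LR\,\bigl\|\sum_{k\in\cA}|f_k|\bigr\|_{L^p(B)}$, which can strictly dominate $LR\,M_B$; for instance with $f_1=\sin$, $f_2=\cos$, $p=1$ over a full period, $\bigl\||f_1|+|f_2|\bigr\|_{L^1}=8$ while $\sup_{\|\hat\bc\|_\infty=1}\|\hat c_1f_1+\hat c_2f_2\|_{L^1}=4\sqrt2$. (This does not disprove the theorem, since the actual perturbations $\delta_k$ are $L$-Lipschitz and vanish at the center and so cannot align with $\sgn(f_k)$; but neither your argument nor the paper's exploits that structure.) If you want an airtight statement, there are two easy patches: (i) use $D\le LR\sum_{k\in\cA}\|f_k\|_{L^p(B)}\le K\,LR\,M_B$, where each $\|f_k\|_{L^p(B)}\le M_B$ follows by taking $\hat\bc$ equal to the $k$-th canonical basis vector, which proves the theorem under the slightly stronger hypothesis $\cK_B^p>2KLR/\eps$; or (ii) enlarge the denominator of $\cK_B^p$ to $\bigl\|\sum_k|f_k|\bigr\|_{L^p(B)}$ --- essentially the relaxation the paper itself mentions in the remark following the theorem --- after which your pointwise bound closes the argument verbatim. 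So your proof is exactly as complete as the paper's, and the gap you flagged is real, shared by the published argument, and fixable at the cost of a constant or a redefinition.
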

\begin{proof}
We prove this by contradiction. Let $\tilde{c}_{k} = c_{k}(x_0, t_0)$ for $k\in\cA$ and $\tilde{c}_{k} = 0$ for $k\in\cA^{\complement}$, then 
\begin{equation}
\begin{aligned}
      &\left\| \sum_{k=1}^K \left(\overline{c}_{k}^{\ast} - c_{k}(y, \tau) \right)  f_k(y, \tau)\right\|_{L^{\infty}(B)}= \left\|  \sum_{k=1}^K \left(\overline{c}_{k}^{\ast} - \tilde{c}_{k} + \tilde{c}_{k}- c_{k}(y,\tau) \right)  f_k(y, \tau)\right\|_{L^{\infty}(B)} \\
      &\ge \left\| \sum_{k=1}^K (\overline{c}_{k}^{\ast} -\tilde{c}_{k} )   f_k (y, \tau)\right\|_{L^{\infty}(B)}   -  \left\| \sum_{k\in\cA} \left(\tilde{c}_{k} - c_{k}(y, \tau) \right)  f_k(y, \tau)\right\|_{L^{\infty}(B)} \\
      &\ge  \left\| \sum_{k=1}^K (\overline{c}_{k}^{\ast} -\tilde{c}_{k} )   f_k (y, \tau)\right\|_{L^{\infty}(B)}   -  LR  \sup_{\|\hat \bc\|_{\infty}= 1}  \left\|\sum_{k\in\cA} \hat c_{k}  f_k(y, \tau)\right\|_{L^{\infty}(B)} .
\end{aligned}
\end{equation}
On the other hand, if the conclusion does not hold, then there exists an index $k'$ such that $|\overline{c}_{k'}^{\ast} - \tilde{c}_{k'}|\ge \eps$ and 
\begin{equation}
  \left\| \sum_{k=1}^K (\overline{c}_{k}^{\ast} -\tilde{c}_{k} )   f_k(y, \tau)\right\|_{L^{\infty}(B)}   \ge \inf_{\|\hat \bc\|_{\infty}= \eps}  \left\|\sum_{k=1}^{K} \hat c_{k}  f_k(y, \tau)\right\|_{L^{\infty}(B)}.
\end{equation}
Note the linear scaling,
\begin{equation}
\begin{aligned}
   \inf_{\|\hat \bc\|_{\infty}= \eps}  \left\|\sum_{k=1}^{K} \hat c_{k}  f_k(y, \tau)\right\|_{L^{\infty}(B)} &=     \eps \inf_{\|\hat \bc\|_{\infty}= 1} \left\|\sum_{k=1}^{K} \hat c_{k}  f_k(y, \tau)\right\|_{L^{\infty}(B)}.
\end{aligned}
\end{equation}
Therefore, if
\begin{equation}
    \eps{\cK}_{B}^{\cF} > 2  LR  ,
\end{equation}
we would obtain that 
\begin{equation}
\begin{aligned}
  \left\| \sum_{k=1}^K \left(\overline{c}_{k}^{\ast} - c_{k}(y, \tau) \right)  f_k(y, \tau)\right\|_{L^{\infty}(B)}  > \left\| \sum_{|\alpha|=0}^n \left(\tilde{c}_{k}- c_{k}(y,\tau) \right)  f_k (y, \tau)\right\|_{L^{\infty}(B)}
\end{aligned}
\end{equation}
which is a contradiction with the optimal choice of $\overline{c}^{\ast}_{k}$.
\end{proof}
From the above result, we can immediately see the following.
\begin{corollary}\label{Cor:4.2}
For a differential operator, $\cL$ with constant coefficients, local regression on a neighborhood $B$ can identify the PDE exactly if $\cK_B^{\cF}>0$.
\end{corollary}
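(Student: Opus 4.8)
The plan is to read Corollary~\ref{Cor:4.2} as the degenerate specialization of Theorem~\ref{THM: 4.1} in which the coefficient functions do not vary in space or time. First I would observe that a constant-coefficient operator has $c_k(x,t)\equiv c_k$, so every $c_k$ is Lipschitz with constant $L=0$. Feeding $L=0$ into the hypothesis~\eqref{EQ: INEQ} of Theorem~\ref{THM: 4.1} makes the threshold collapse,
\begin{equation}
\frac{2LR}{\eps}=0,
\end{equation}
so the requirement $\cK_B^p>2LR/\eps$ reduces exactly to $\cK_B^p>0$. Invoking Theorem~\ref{THM: 4.1} under this reduced hypothesis immediately gives that the optimal constant-coefficient fit satisfies $\overline{c}_k^{\ast}=0$ for every $k\in\cA^{\complement}$; that is, the local regression never activates a feature absent from the true operator, so the PDE \emph{type} (support) is correctly recovered.

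It then remains to promote correct support to exact recovery of the coefficients on $\cA$, which is where the condition $\cK_B^p>0$ does the additional work. By the definition~\eqref{eq:constant}, a strictly positive $\cK_B^p$ forces the numerator $\inf_{\|\hat\bc\|_{\infty}=1}\|\sum_{k}\hat c_k f_k\|_{L^p(B)}$ to be positive, and by homogeneity this says $\sum_k \hat c_k f_k=0$ in $L^p(B)$ only when $\hat\bc=\bzero$; in other words, the dictionary features $\{f_k\}_{k=1}^K$ are linearly independent on $B$. Since the true operator obeys $u_t=\sum_k c_k f_k$ on $B$ with the $c_k$ constant, the local regression error vanishes at the true coefficient vector, and linear independence makes that vector the \emph{unique} minimizer. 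Hence the recovered constants $\overline{c}_k^{\ast}$ equal the true $c_k$ for every $k$, so the operator is identified exactly.

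The only bookkeeping point is compatibility with the admissible set: one picks $\eps$ strictly below $\min_{k\in\cA}|c_k|$ so that the true constants lie in $\cC_{\eps}$ and therefore form a feasible, hence optimal, point of the constrained minimization in Theorem~\ref{THM: 4.1}. I do not expect any genuine obstacle here. The whole statement is inherited from Theorem~\ref{THM: 4.1} by sending the Lipschitz constant to zero, and the upgrade from support identification to exact coefficient recovery is a one-line consequence of the linear independence encoded by $\cK_B^p>0$; the corollary is essentially immediate once the $L=0$ reduction is made explicit.
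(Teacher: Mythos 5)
Your proof is correct and follows essentially the same route as the paper, which presents the corollary as immediate from Theorem~\ref{THM: 4.1} by observing that constant coefficients have Lipschitz constant $L=0$, so the hypothesis~\eqref{EQ: INEQ} collapses to $\cK_B^p>0$. Your additional step upgrading support identification to exact coefficient recovery---via the linear independence of the features on $B$ encoded in $\cK_B^p>0$, together with choosing $\eps$ below $\min_{k\in\cA}|c_k|$ so the true constants are feasible and attain zero error---is a correct filling-in of what the paper leaves implicit in the phrase ``identify the PDE exactly.''
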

\begin{lemma}
For a linear differential operator $\cL$ with constant coefficients, if the solution $u$ contains enough Fourier modes, we have $\cK_{B}^{\cF} > 0$ for $B = \Omega\times (t_0, t_1)$.
\end{lemma}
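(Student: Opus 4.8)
The plan is to reduce the positivity of $\cK_B^p$ to the \emph{linear independence} of the dictionary features $\{f_k\}_{k=1}^K$ as functions on $B=\Omega\times(t_0,t_1)$, and then to obtain this independence from a spatial Fourier argument that turns ``enough Fourier modes'' into a polynomial nonvanishing condition of the type used in Theorem~\ref{THM: INTERPOLATION}.

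First I would settle the equivalence that $\cK_B^p>0$ if and only if $\{f_k\}$ is linearly independent in $L^p(B)$. The map $\hat\bc\mapsto \big\|\sum_k \hat c_k f_k\big\|_{L^p(B)}$ is a continuous seminorm on $\bbR^K$, and the sphere $\{\|\hat\bc\|_\infty=1\}$ is compact, so both the infimum and the supremum in \eqref{eq:constant} are attained; the supremum is finite and positive as long as some $f_k\neq 0$. Hence $\cK_B^p=0$ exactly when some unit vector $\hat\bc$ makes $\sum_k\hat c_k f_k=0$ in $L^p(B)$, i.e.\ precisely when the features are linearly dependent. It therefore suffices to exclude a nontrivial vanishing combination.

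Next I would expand in the spatial Fourier basis. For a linear operator the relevant features are the spatial derivatives $f_k=\partial^{\alpha_k}u$ with $|\alpha_k|\le n$, and writing $u(y,\tau)=\sum_{\zeta\in\bbZ^d}\widehat{u}(\zeta,\tau)e^{i\zeta\cdot y}$ gives
\begin{equation}
\sum_{k=1}^K \hat c_k\,\partial^{\alpha_k}u(y,\tau)=\sum_{\zeta\in\bbZ^d} P(i\zeta)\,\widehat{u}(\zeta,\tau)\,e^{i\zeta\cdot y},\qquad P(i\zeta):=\sum_{k=1}^K \hat c_k\,(i\zeta)^{\alpha_k},
\end{equation}
where $P$ is a polynomial of degree at most $n$ in $\zeta$. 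If this combination vanishes in $L^p(\Omega\times(t_0,t_1))$, then by uniqueness of Fourier coefficients $P(i\zeta)\widehat{u}(\zeta,\tau)=0$ for a.e.\ $\tau\in(t_0,t_1)$ and every $\zeta$. Consequently $P(i\zeta)=0$ for every frequency actually present in the trajectory, i.e.\ for every $\zeta$ in $Q:=\{\zeta\in\bbZ^d:\widehat{u}(\zeta,\cdot)\not\equiv 0 \text{ on }(t_0,t_1)\}$. Finally I would invoke the hypothesis: the polynomial $P$ has at most $\binom{n+d}{d}$ coefficients, so if $Q$ is rich enough, namely it contains at least $\binom{n+d}{d}$ frequencies not lying on a common algebraic hypersurface of degree $\le n$ (the precise reading of ``$u$ contains enough Fourier modes'', consistent with Theorem~\ref{THM: INTERPOLATION}), then the only degree-$\le n$ polynomial vanishing on all of $Q$ is $P\equiv 0$, forcing $\hat\bc=0$ and contradicting $\|\hat\bc\|_\infty=1$. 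Hence no nontrivial vanishing combination exists and $\cK_B^p>0$.

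The routine parts are the compactness/seminorm equivalence and the Fourier uniqueness step. The main obstacle is pinning down the quantitative meaning of ``enough Fourier modes'' so that the nonvanishing of $P$ on $Q$ genuinely follows (the general-position condition on the active frequencies), and, if the dictionary is allowed to carry nonlinear features such as $uu_x$ or $u^2$, the clean symbol factorization above fails and one must instead establish linear independence of those functions directly, which is where the argument would require the most care.
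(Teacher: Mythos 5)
Your proposal follows essentially the same route as the paper's own proof: expand $u$ in a spatial Fourier series, observe that each linear feature acts on $e^{i\zeta\cdot y}$ as a polynomial symbol $P_k(i\zeta)$, conclude that a nontrivial vanishing combination forces $\sum_k \hat c_k P_k(i\zeta)$ to vanish at every active frequency, and use the richness of the Fourier support to force $\hat\bc=\bzero$, a contradiction. If anything, yours is slightly more careful than the paper's: the paper asserts the algebraic equation ``only permits finitely many solutions,'' which is literally false for $d>1$ (a nonzero multivariate polynomial can vanish on infinitely many lattice points), whereas your general-position condition on the active frequency set, matching the hypothesis of Theorem~\ref{THM: INTERPOLATION}, is the correct quantitative reading of ``enough Fourier modes,'' and your compactness argument making the attainment of the infimum explicit is a harmless addition.
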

\begin{proof}
Suppose  $\cK_B^{\cF} = 0$ on $B= \Omega\times (t_0, t_1)$. It implies that $\sum_{k=1}^{K} \hat{c}_{k}  f_k(y, \tau) = 0$ for any  $(y,\tau)\in B$ and some $\hat{\bc}=(\hat{c}_k)\in \mathbb{R}^K, \|\hat{\bc}\|_{\infty}=1$. Therefore, we must have 
\begin{equation}
    \sum_{k=1}^{K} \hat{c}_{k}  f_k(y, \tau) = 0
\end{equation}
for all $y\in\Omega$ and $\tau\in(t_0, t_1)$. Expanding the solution in terms of the Fourier series
\begin{equation}
    u(y, \tau) = \sum_{v\in\bbZ^d} d_{v}(\tau ) e^{iv\cdot y}, 
\end{equation}
Since each $f_k$ is a partial derivative, $f_k(e^{iv\cdot y}) = P_k(iv) e^{iv\cdot y}$ for certain polynomial $P_k$, we have $ d_{v}(\tau) \sum_{k=1}^{K} \hat{c}_{k}  P_k(iv) = 0$. Each $d_v(\tau)$ satisfies an autonomous ODE $d_v^{'}= (\sum_{k=1}^{K} c_{k}  P_k(iv))d_v$ from~\eqref{eq_PDEform},  which means either $d_{k}(\tau) \equiv 0$ on $(t_0, t_1)$ or 
\begin{equation}
    \sum_{k=1}^{K} \hat{c}_{k}  P_k(iv)  = 0.
\end{equation}
Since the above algebraic equation only permits finitely many solutions of $v$, we prove the result.
\end{proof}
The above lemma indicates that if $\cL$ is a linear differential operator with constant coefficients and the solution has sufficiently many Fourier modes, then we can always find a local patch $B\subset \Omega\times (t_0, t_1)$ such that $\cK_{B}^{\cF} > 0$. Combined with Corollary~\ref{Cor:4.2}, we have the following Corollary.

\begin{theorem}\label{THM:4.3}
A linear differential operator $\cL$ with constant coefficients can be identified exactly by local regression if the solution $u$ contains sufficiently many Fourier modes.
\end{theorem}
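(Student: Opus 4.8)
The plan is to assemble the statement from the two results that immediately precede it, since Corollary~\ref{Cor:4.2} already reduces exact identification to the positivity of the condition constant $\cK_B^p$, and the Lemma just above reduces that positivity to a Fourier-content hypothesis on $u$. Concretely, I would first invoke the preceding Lemma: if the solution $u$ carries enough active Fourier modes $v\in\bbZ^d$ --- specifically, modes not confined to the zero set of the symbol combination $\sum_{k=1}^K c_k P_k(iv)$ for any nonzero unit coefficient vector $\bc$, where $P_k$ is the polynomial symbol of the feature $f_k$ --- then $\cK_B^p>0$ on the time-slab $B=\Omega\times(t_0,t_1)$. I would then feed this directly into Corollary~\ref{Cor:4.2}, which asserts that constant-coefficient local regression on any neighborhood $B$ with $\cK_B^p>0$ recovers $\cL$ exactly. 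Chaining the two yields identification on the slab.

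The remaining work is to upgrade the slab $\Omega\times(t_0,t_1)$, which is global in space, to a genuinely local patch $B\subset\Omega\times(t_0,t_1)$, as claimed in the discussion preceding the theorem. Here I would argue that positivity of the condition constant propagates downward to subpatches: a nontrivial combination $\sum_{k=1}^K c_k f_k$ that does not vanish identically on the slab cannot vanish identically on an open subpatch either. For this I would use that each feature $f_k$ is a fixed spatial partial derivative of $u$, together with the fact that $u(\cdot,t)$ is spatially real-analytic for the constant-coefficient evolution~\eqref{EQ: MODEL} (for $t>0$ in the dissipative/parabolic case this is exactly the smoothing effect exploited in Theorem~\ref{THM: RANK}). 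Unique continuation then forces any combination vanishing on an open patch to vanish on the whole slab, contradicting the Lemma unless $\bc=\bzero$; hence $\cK_{B}^p>0$ on small patches as well, and Corollary~\ref{Cor:4.2} applies patchwise.

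I expect the main obstacle to be precisely this localization step rather than the bookkeeping combination. The Lemma's mechanism is intrinsically global in space: it separates modes through $L^2(\Omega)$ Fourier orthogonality, so a priori it says nothing about a small spatial ball, where the exponentials $e^{iv\cdot y}$ are no longer orthogonal and a linear dependence among the restricted $f_k$ is conceivable. The analyticity and unique-continuation argument is what bridges this gap, and its hypotheses are clean for elliptic and parabolic $\cL$ but more delicate for a first-order hyperbolic $\cL$ with only $C^p$ data, where one would instead have to track the finitely many admissible frequencies along characteristics. For the stated generality I would therefore either restrict attention to operators whose solutions enjoy spatial analyticity, or keep $B$ as the full time-slab and read ``local'' as local-in-time, which already suffices for the identification conclusion through Corollary~\ref{Cor:4.2}.
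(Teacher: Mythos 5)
Your proposal follows essentially the same route as the paper: the paper obtains Theorem~\ref{THM:4.3} precisely by chaining the preceding Lemma (sufficiently many Fourier modes $\Rightarrow$ $\cK_B^p>0$ on the slab $B=\Omega\times(t_0,t_1)$) with Corollary~\ref{Cor:4.2}, exactly as you do. The only divergence is the localization step from the time-slab to a genuinely local spatial patch: the paper merely asserts, without proof, that such a patch with $\cK_B^p>0$ can always be found, whereas you supply a spatial-analyticity/unique-continuation argument for it and correctly flag it as the delicate point (notably for hyperbolic operators with only $C^p$ data) --- so your proposal is, if anything, more complete than the paper's own treatment.
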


\begin{remark}
The numerator in the definition of $\cK_B^{\cF}$ measures the minimal support of the set  $\cS(B):=\{(f_1(y,\tau),\ldots, f_K(y,\tau))\in\mathbb{R}^K, (y,\tau)\in B\}$ on the unit $\ell^{\infty}$ sphere. The denominator can be relaxed to $\max_{(y,\tau)\in B}\sum_{k\in\cA} | f_k(y, \tau)| $.

If the solution varies little on $B$, e.g., $R$ is small, the set $\cS(B)$ is close to a constant vector in $\mathbb{R}^K$, hence $\cK_B^{\cF}$ is small. On the other hand, the coefficients need to vary slowly on $B$ so that $LR$ is small. So some scale separation between the coefficients and the solution is needed on $B$.

In general, the larger $K$ is, the smaller $\cK_B^{\cF}$ will be.
In other words, the larger the dictionary, the harder the identification of the underlying PDE. 
\end{remark}

\subsection{Data driven and data adaptive measurement selection}\label{SEC:patch}
In practice, using the solution data indiscriminately for PDE identification may not be a good strategy. 

On each patch, the local regression is approximated by a PDE with constant coefficients as studied. For a stable local identification, the solution should contain some variation as shown in the previous section.  On the other hand, the data on those patches whose measurement resolution can not resolve the rapid variation of the solution leads to significant error, e.g., truncation error in numerical differentiation, and hence should not be used either. 
To improve CaSLR's robustness and accuracy, we propose the following process to  select patches containing reliable and accurate information.

First, we propose to use the following numerical estimate of the local Sobolev semi-norm to filter out those patches in which the solution may be singular or oscillate rapidly, 
\begin{align}
\label{eq:sobolev}
    \beta(x_j,t_j) = \sqrt{\frac{1}{m_j}\sum_{m=1}^{m_j}\sum_{p=1}^{P_{\max}}(\partial_x^{p}u(x_{j,m},t_{j,m}))^2}
\end{align}
where $P_{\max}$ is the maximal order of partial derivatives in the dictionary. We remove those local regressions at $(x_j,t_j)$ in {~\eqref{eq_loc}} if $\beta(x_j,t_j) < \underline{\beta}$ or 
$\beta(x_j,t_j)>\bar{\beta}$ for some thresholds $\underline{\beta},\bar{\beta}>0$. In this work, we fix $\underline{\beta}$ to be the 1st percentile and $\bar{\beta}$ the 99th percentile of all the collected local Sobolev semi-norms.

However, when the measurement data contain noise, the constant solution may not be detected using numerical approximation of~\eqref{eq:sobolev}.  Here we design the following criterion to detect whether the solution is almost constant in a neighborhood. 
For an arbitrary $(\bx,t)\in\Omega\times[0,T_{\max}]$, consider a neighborhood centered at $(\bx,t)$ denoted by  $B_{(\bx,t)}=\prod_{i=1}^{d}[x_i-r_i,x_i+r_i]\times[t-r,t+r]$ with radius in each dimension $r_i>0$, $i=1,2,\dots,d$, $r>0$. Suppose we observe a discrete set of data (including $(\bx,t)$) with noise in this neighborhood: $\widehat{B}_{(\bx,t)}=\{\widehat{u}(\by,s):=u(\by,s)+\varepsilon(\by,s):~\varepsilon(\by,s)\overset{\text{i.i.d.}}{\sim}\mathcal{N}(0,\sigma^2),~(\by,s)\in B_{(\bx,t)}\}$. Furthermore, we assume that $u$ is locally Lipschitz continuous in $B((\bx,t))$ with constant $L>0$. Let $|\widehat{B}_{(\bx,t)}|$ be the cardinality of $\widehat{B}_{(\bx,t)}$ and $R = \max\{r_1,\dots,r_d,r\}$. We note that
\begin{align*}
\widehat{u}(\bx,t) -\overline{\widehat{u}}_{\widehat{B}}
\sim\mathcal{N}(\mu_{(\bx,t)},(1-|\widehat{B}_{(\bx,t)}|^{-1})\sigma^2)
\end{align*}
where $\mu_{(\bx,t)}=u(\bx,t)-\overline{u}_{\widehat{B}}$, 
$\overline{u}_{\widehat{B}}$ and $\overline{\widehat{u}}_{\widehat{B}}$ is the average of $u$ and $\widehat{u}$ over $\widehat{B}$ respectively.
By the assumption of Lipschitz continuity of $u$, $|\mu_{(\bx,t)}|\leq \sqrt{D}LR$, where $D:=d+1$. 

Hence, to estimate $\sigma^2$, we  consider a collection of points $\{(\bx_1,t_1),\dots, (\bx_N,t_N)\}\subset\Omega\times[0,T_{\max}]\}$ with non-intersecting boxes $B_{(\bx_n,t_n)}$, $n=1,2,\dots, N$ centered at each of them, respectively. Within each $B_{(\bx_n,t_n)}$, we observe noisy data over a finite discrete subset $\widehat{B}_{(\bx_n,t_n)}\subset B_{(\bx_n,t_n)}$ having cardinality $B>0$. Denote $\zeta_n=\widehat{u}(\bx_n,t_n) -\overline{\widehat{u}}_{\widehat{B}_{(\bx_n,t_n)}}$, then by the computations detailed in Appendix~\ref{append_detail},
\begin{align}
\widehat{\sigma}^2=\frac{B\sum_{m=1}^N(\zeta_m-\frac{1}{N}\sum_{n}\zeta_n)^2}{(N-1)(B-1)}\label{eq_est_sigma}
\end{align}
is a biased estimator for $\sigma^2$ with
\begin{align}
|\mathbb{E}[\widehat{\sigma}^2-\sigma^2]|\leq \frac{DNBL^2R^2}{(N-1)(B-1)}\label{eq_est_mean}
\end{align}
and
\begin{align}
\text{Var}\,\widehat{\sigma}^2\leq \frac{2\sigma^4}{N-1}+\frac{NB\sigma^2\gamma}{(N-1)^2(B-1)},\label{eq_est_var}
\end{align}
where $\gamma : = 4DL^2R^2$. From the results above, we observe that  to reduce the estimator's bias, we should use small patches, i.e., small $R$, and to control the estimator's variance, we need to use more patches, i.e., larger $N$. Notice that the upper bound of the bias in~\eqref{eq_est_sigma} is closely related to the magnitude of $u$'s local variation in $B(\bx,t)$ expressed as $\gamma$. This implies that when $u$ has mild variation in $B(\bx,t)$, the estimator~\eqref{eq_est_sigma} becomes approximately unbiased. This was expected since the statistical characteristics of the additive noise become more identifiable if the underlying function does not introduce extra variations. We note that the upper bound of the variance~\eqref{eq_est_var} consists of two terms. The first term reflects the fact that when sufficiently many samples are provided, the estimator becomes more stable, while the second term also shows the influence of the magnitude of $u$'s variation.  

Now for any two distinct points $(\bx,t),(\by,s)\in\Omega\times[0,T_{\max}]$, we know that  $\widehat{u}(\bx,t)-\widehat{u}(\by,s)\sim\mathcal{N}(u(\bx,t)-u(\by,s),2\sigma^2)$, hence under the hypothesis $\mathcal{H}_0:$ $u(\bx,t)=u(\by,s)$, we would have
\begin{align}
    \mathbb{P}\left(\frac{|\widehat{u}(\bx,t)-\widehat{u}(\by,s)|}{\sqrt{2}\sigma}>\alpha_{0.90}~|~\mathcal{H}_0\right)<0.1\,,
\end{align}
where $\alpha_{0.90}=1.644853$. By the estimation~\eqref{eq_est_sigma}, if
\begin{align}
|\widehat{u}(\bx,t)-\widehat{u}(\by,s )|>\sqrt{2}\alpha_{0.90}\widehat{\sigma}\,,\label{eq_compare}
\end{align}
we reject $\mathcal{H}_0$; otherwise, we accept $\mathcal{H}_0$. In case of PDE systems of multiple unknown functions $\{u_j\}_{j=1}^J$, we consider the hypothesis $\mathcal{H}_0: u_j(\bx,t)=u_j(\by,s)$ at least for one $j=1,\dots,J$, and reject $\mathcal{H}_0$ if  $\min_{j=1,\dots,J}\{|\widehat{u}_j(\bx,t)-\widehat{u}_j(\by,t)|\}>\sqrt{2}\alpha_{0.90}\widehat{\sigma}$; otherwise, we accept $\mathcal{H}_0$. 

The statistical test described above helps to determine if the underlying function restricted to a patch shows variation when the collected data is contaminated by Gaussian noise. Specifically, for each pair of sampled points in a patch, we conduct the comparison~\eqref{eq_compare}: if any of them yields a rejection of $\mathcal{H}_0$, we keep the patch; otherwise, we discard the data collected in the patch.

\begin{remark}
For robust identification of PDE with variable coefficients using local patch data, it is desirable to have local sensors that support local patches whose patch size $R$ is small enough to resolve the variation of the coefficients while the measurement resolution $h$ is fine enough so that $\frac{R}{h}$ is large enough and hence can detect a good bandwidth of modes in the solution.
At the same time, one needs to select patch data that contain as many modes as possible that can be resolved by patch resolution.
\end{remark}

\subsection{Numerical Experiments}\label{SEC: NUM}
In this section, we present numerical examples to demonstrate our local regression and global consistency-enforced PDE identification method. In some of the numerical examples, we use the following transition function in time with slope $s\in\mathbb{R}$ and critical point $t_c\in\mathbb{R}$ as
\begin{align}
    \tau(t;s,t_c) = 0.5 + 0.5\tanh( s(t-t_c))),\quad t\in\mathbb{R},
\end{align}
which models a smooth emergence (when $s>0$) or a smooth decaying (when $s<0$) behavior; when $|s|$ increases, the transition becomes sharper. The critical point $t_c$ signifies the boundary point between two phases. Moreover, we use the Jaccard score~\cite{jaccard1912distribution} 
\begin{equation}
J(S_0,S_1) = \frac{|S_0\cap S_1|}{|S_0\cup S_1|}\label{eq_jaccard}
\end{equation}
as a metric for identification accuracy. Here $S_0$ and $S_1$ denote two sets and the Jaccard score measures the similarity between them. In our case, $S_0$ represents the set of indices of the true features in a given dictionary and $S_1$ denotes the set of indices of features in an identified PDE model. We note that $J(S_0,S_1) \in [0,1]$, and $J(S_0,S_1) = 1$ if and only if $S_0=S_1$. A higher Jaccard score indicates that the sets to be compared are more similar.

In the following, we consider a general setup for PDE identifications using a collection of local sensors. For each experiment, we put sensors randomly in space, and each of them collects data at some fixed intervals in time.  Each sensor collects data in a cubic neighborhood in space and time whose side length  is $ 2r+1$ in each spatial dimension and $2r_t+1$ in time. Here $r>0$ and $r_t>0$ are referred to as the sensing radius and time duration respectively. 
\\ \\
{\bf Example 1:} Transport equation. \\
First, we examine a transport equation with a periodic boundary condition whose speed varies both in space and time:
\begin{equation}\label{eq: trans}
    \begin{aligned}
u_t(x,t) &= (1+0.5\sin(\pi x)\tau(t;-10,0.5))u_x(x,t)\;,\quad(x,t)\in[-1,1)\times (0,1],\\
u(x,0)& = \sin(4\pi(x+0.1))+ \sin(6\pi x) +\cos(2\pi (x-0.5)) + \sin(2\pi (x+0.1)).
\end{aligned}
\end{equation}
We solved it numerically over a grid with 100 points in space and 5000 points in time, and Figure~\ref{fig_exp1} (a) shows the solution. 
\begin{figure}[!htb]
\begin{tabular}{ccc}
    \begin{minipage}[p]{0.3\textwidth}
    \includegraphics[width = \textwidth]{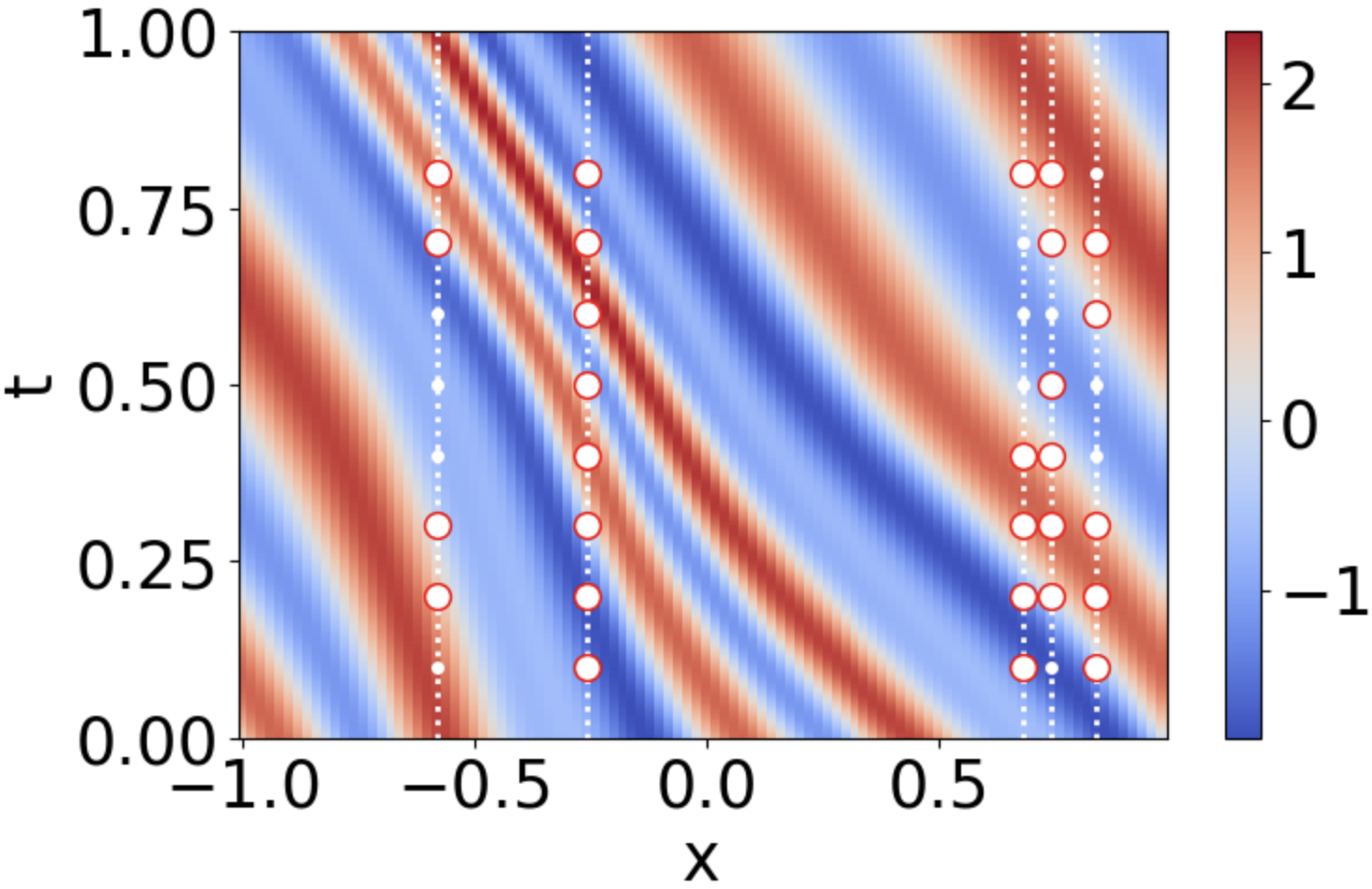}
    \end{minipage}&
    \begin{minipage}[p]{0.3\textwidth}
    \includegraphics[width = \textwidth]{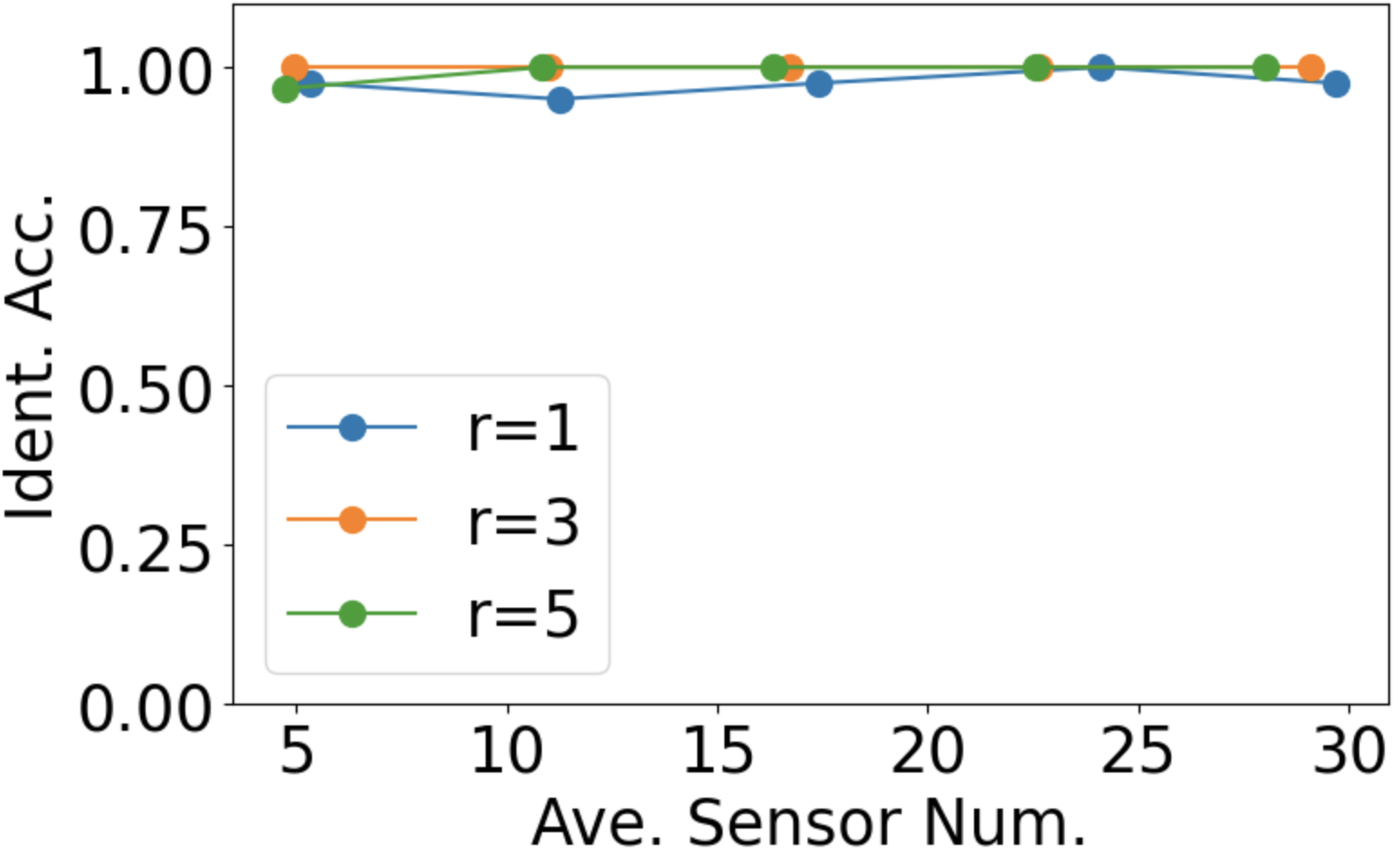}
    \end{minipage}&
    \begin{minipage}[p]{0.3\textwidth}
    \includegraphics[width = \textwidth]{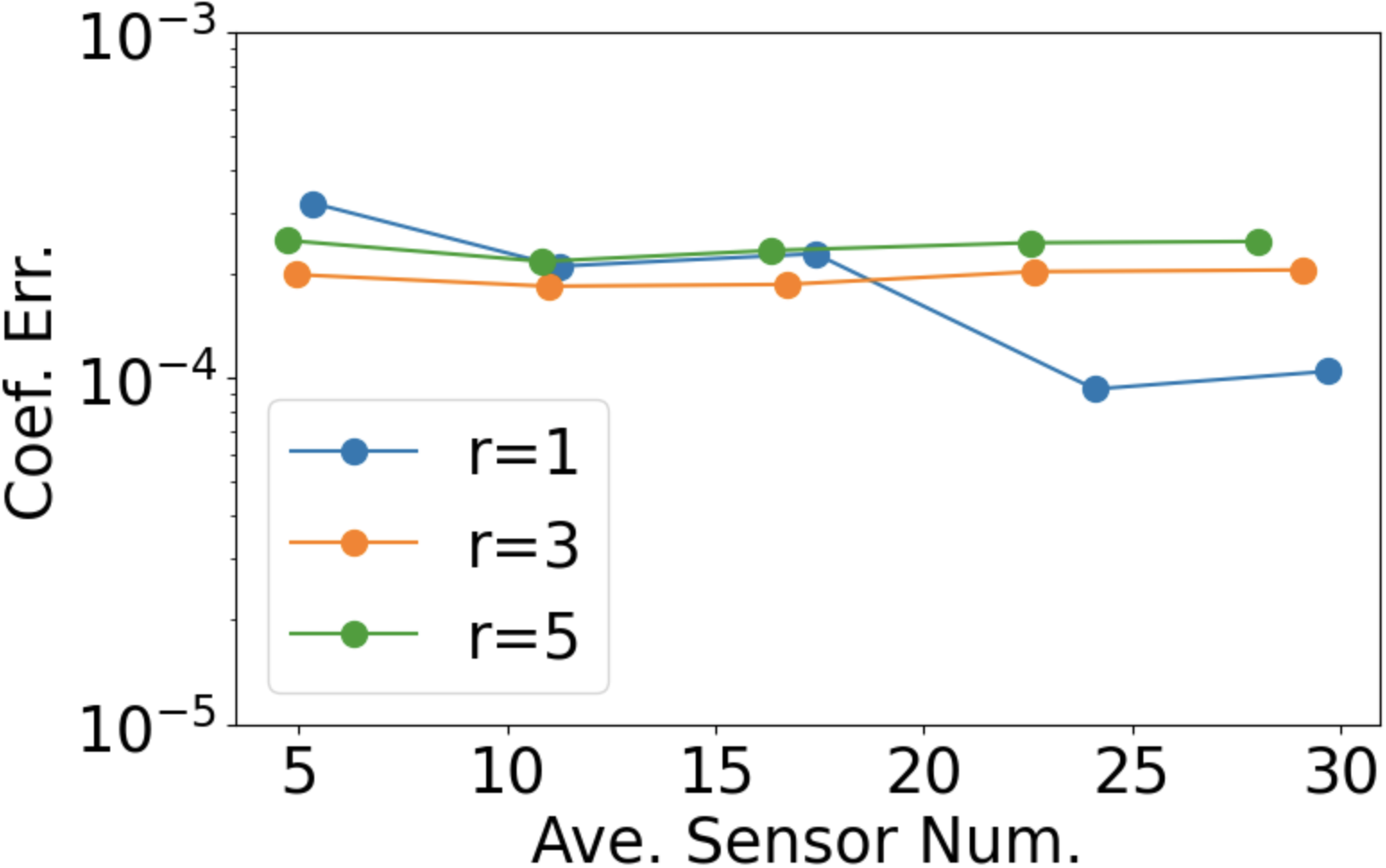}
    \end{minipage}\\
    (a)&(b)&(c)
\end{tabular}
\caption{Identification of transport equation~\eqref{eq: trans} with space-time varying speed.}\label{fig_exp1}
\end{figure}
 Based on local patch data and using a dictionary of size $59$ including up to $4$-th order partial derivatives of the solution and products of them up to $3$ terms, plus $\sin(u)$, $\cos(u)$, $\sin(u_x)$, and $\cos(u_x)$,  we employed the proposed patch selection scheme described in Section~\ref{SEC:patch}  and applied CaSLR to identify the PDE type as well as to reconstruct the associated local coefficients. For a series of combinations of different numbers of sensors and sensing radii, we conducted 20 independent experiments and recorded the mean values of identification accuracy, shown in (b), and reconstruction error, shown in (c). A set of sensor distributions is shown in (a) as red markers. We observe that the identification of a transport equation is highly reliable even with a single sensor whose patch size is small. 
This considerable robustness is closely related to the fact that the solution data contain diverse Fourier modes over the time-space domain. As we will see in Section~\ref{exp_patchfilt}, our data-driven patch selection helps to preserve such performances even when the solution has compact support at all snapshots.
\\ \\
{\bf Example 2:} KdV type equation. \\
Next, we test on the KdV type equation with space-time varying coefficients and a periodic boundary condition in the following form
\begin{equation}\label{eq: kdv}
    \begin{aligned}
u_t(x,t) &= (3+200t\sin(\pi x))u(x,t)u_x(x,t)\\&\quad\quad +\frac{5+\sin(\frac{400\pi t}{3})}{100}u_{xxx}(x,t),\quad (x,t)\in[-1,1)\times (0,1.5\times10^{-2}],\\
u(x,0)& = \sin(4\pi(x+0.1))+ 2\sin(5\pi x) +\cos(2\pi (x-0.5)) + \sin(3\pi x) +\cos(6\pi x).
\end{aligned}
\end{equation}
\begin{figure}[!htb]
\centering
\begin{tabular}{ccc}
    \begin{minipage}[p]{0.3\textwidth}
    \includegraphics[width = \textwidth]{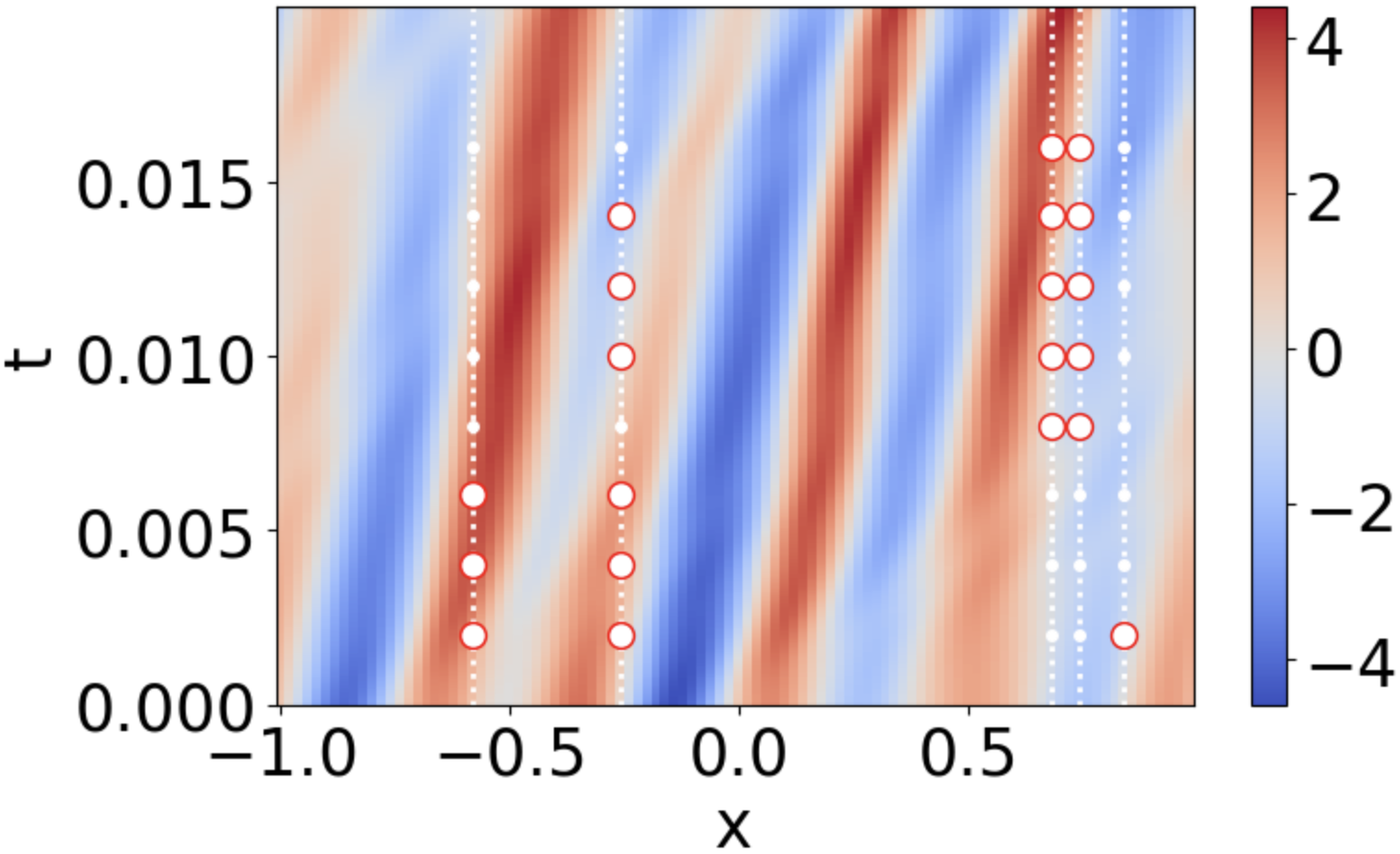}
    \end{minipage}&
    \begin{minipage}[p]{0.3\textwidth}
    \includegraphics[width = \textwidth]{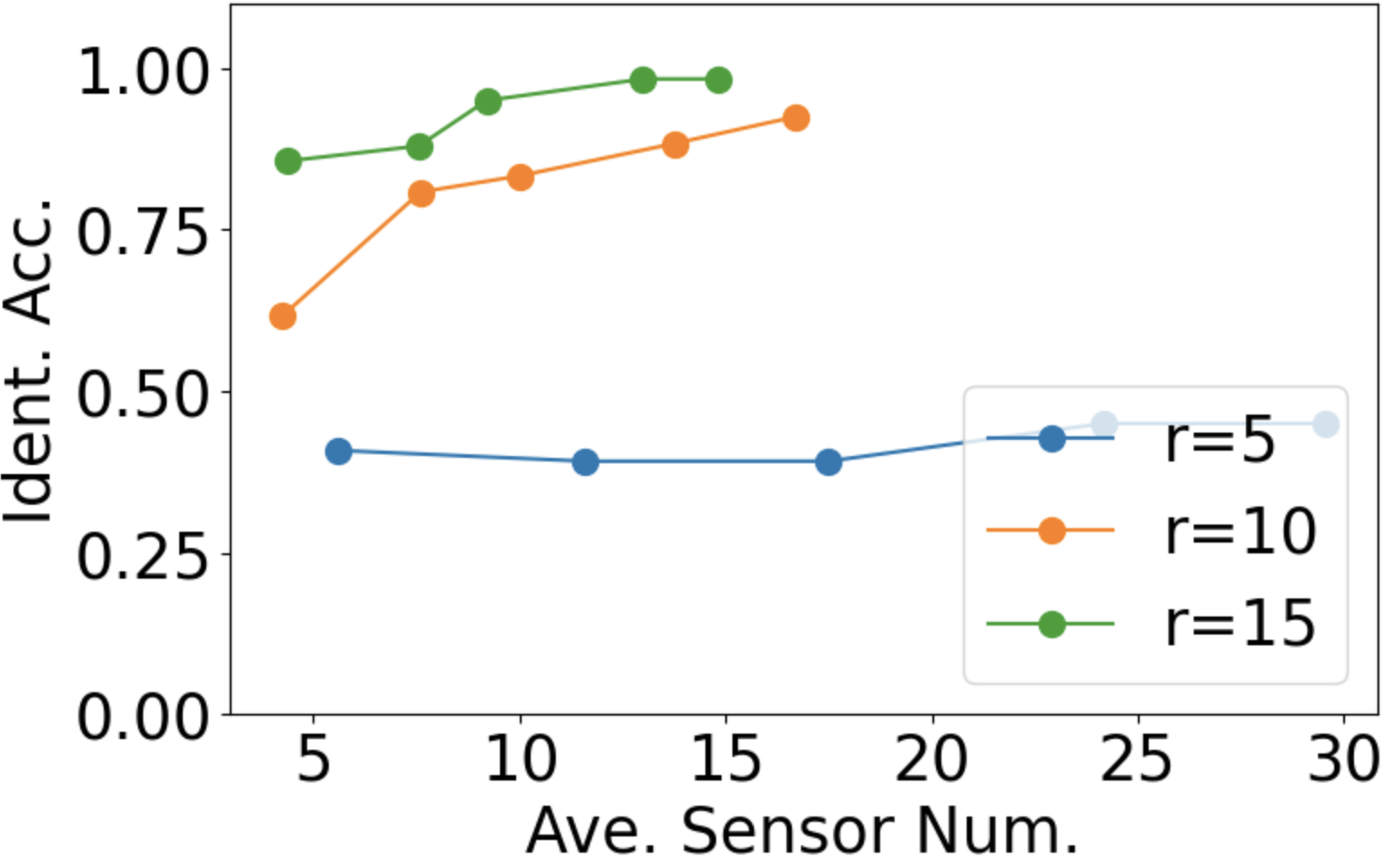}
    \end{minipage}&
    \begin{minipage}[p]{0.3\textwidth}
    \includegraphics[width = \textwidth]{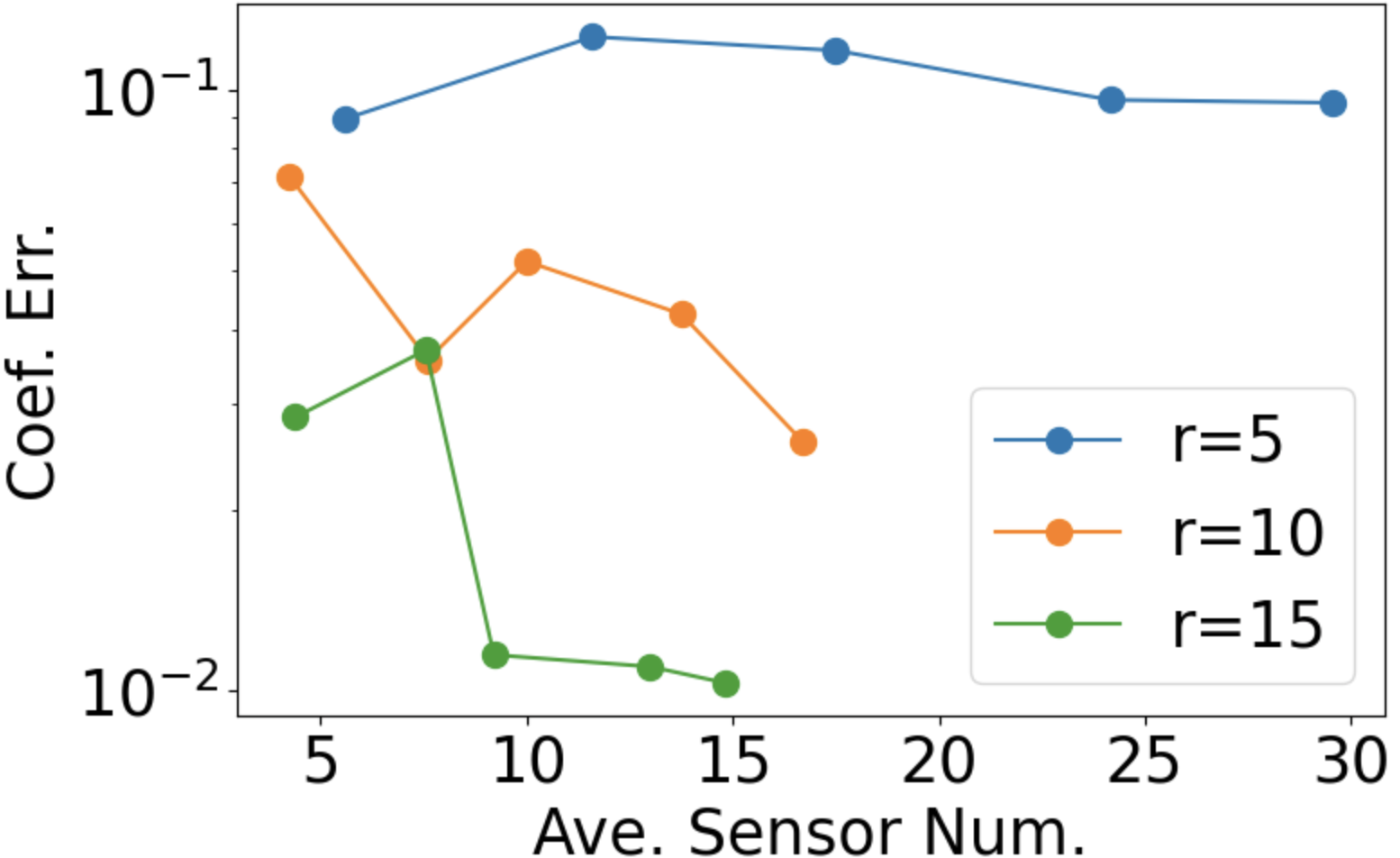}
    \end{minipage}\\
    (a)&(b)&(c)
\end{tabular}
\caption{Identification of KdV type equation~\eqref{eq: kdv} with space-time varying coefficients. }\label{fig_exp2}
\end{figure}
Figure~\ref{fig_exp2} (a) shows its numerical solution with a set of exemplary sensors indicated by red markers. Using a dictionary of size $59$ as specified in the previous example, the identification and reconstruction results of CaSLR are reported in (b) and (c), respectively. As there are more terms and of different orders in the PDE, more Fourier modes, in particular, relatively low-frequency modes for lower order terms in the PDE, need to be included in each local patch measurement for accurate and robust identification. That is why 
patches of size larger than that in the transport equation example are needed.
\\\\
{\bf Example 3:} Schr\"{o}dinger equation.\\
We perform a test on the Schr\"{o}dinger equation $$i\psi_t = \frac{1}{2}\psi_{xx} - V \psi$$ with a space-time dependent potential function $V = -10- 2\sin(40\pi t)\cos(\pi x)$ and periodic boundary condition. Let $\psi = u + iv$, then we obtain the following system.
\begin{align}\label{EQ: SCH}
\begin{cases}
u_t(x,t) &= \frac{1}{2}v_{xx}(x,t)-V(x,t)v(x,t)\;,\\
v_t(x,t) &= -\frac{1}{2}u_{xx}(x,t)+V(x,t)u(x,t)\;
\end{cases}
\end{align}
for $(x,t)\in[-1,1)\times (0,0.05]$ and we take $u(x,0) = 5+f_1(x)$ and $v(x,0) = 3+f_2(x)$, $x\in[-1,1)$, for the initial conditions, where $f_1$ and $f_2$ are random functions~\eqref{EQ: RANDOMFUNC} with $4$ modes.  We solved the system numerically over a grid with $800$ space points and $10000$ time points, then down-sampled the data to a grid of size $200\times 5000$.
\begin{figure}[!htb]
\centering
\begin{tabular}{cc}
    \begin{minipage}[p]{0.3\textwidth}
    \includegraphics[width = \textwidth]{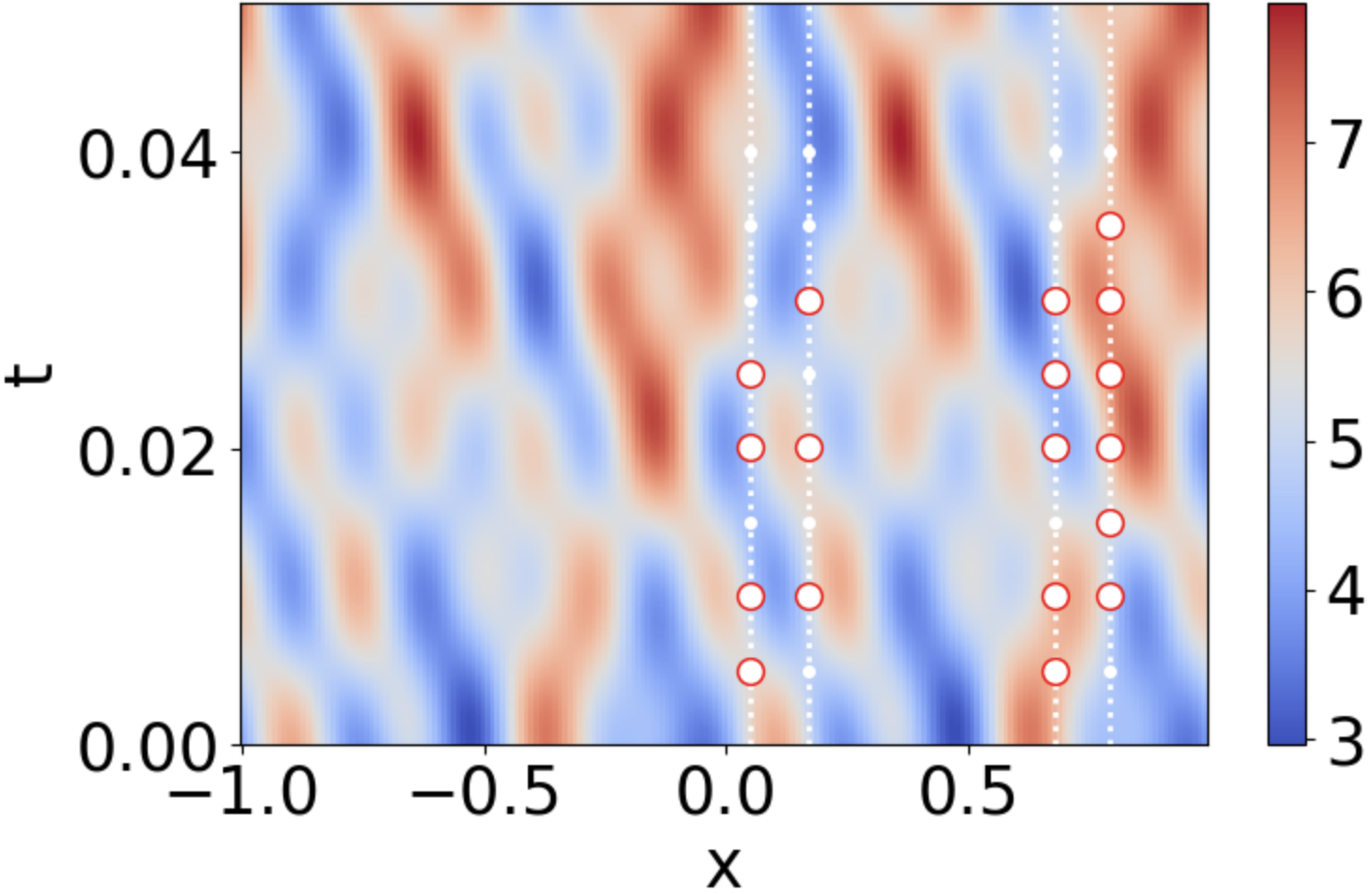}
    \end{minipage}&
    \begin{minipage}[p]{0.3\textwidth}
    \includegraphics[width = \textwidth]{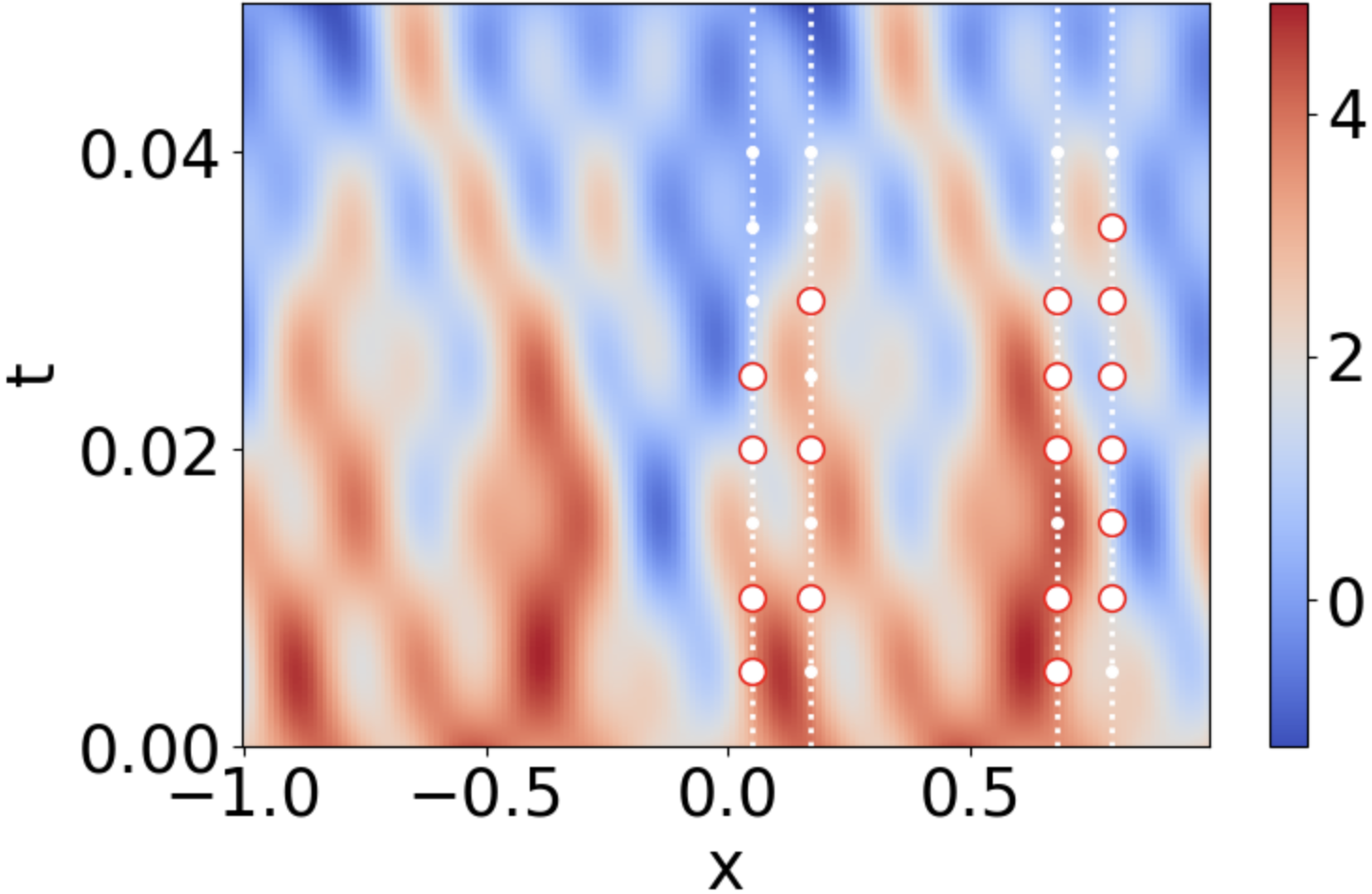}
    \end{minipage}\\
    \quad (a)&\quad (b)\\
    \begin{minipage}[p]{0.3\textwidth}
    \includegraphics[width = \textwidth]{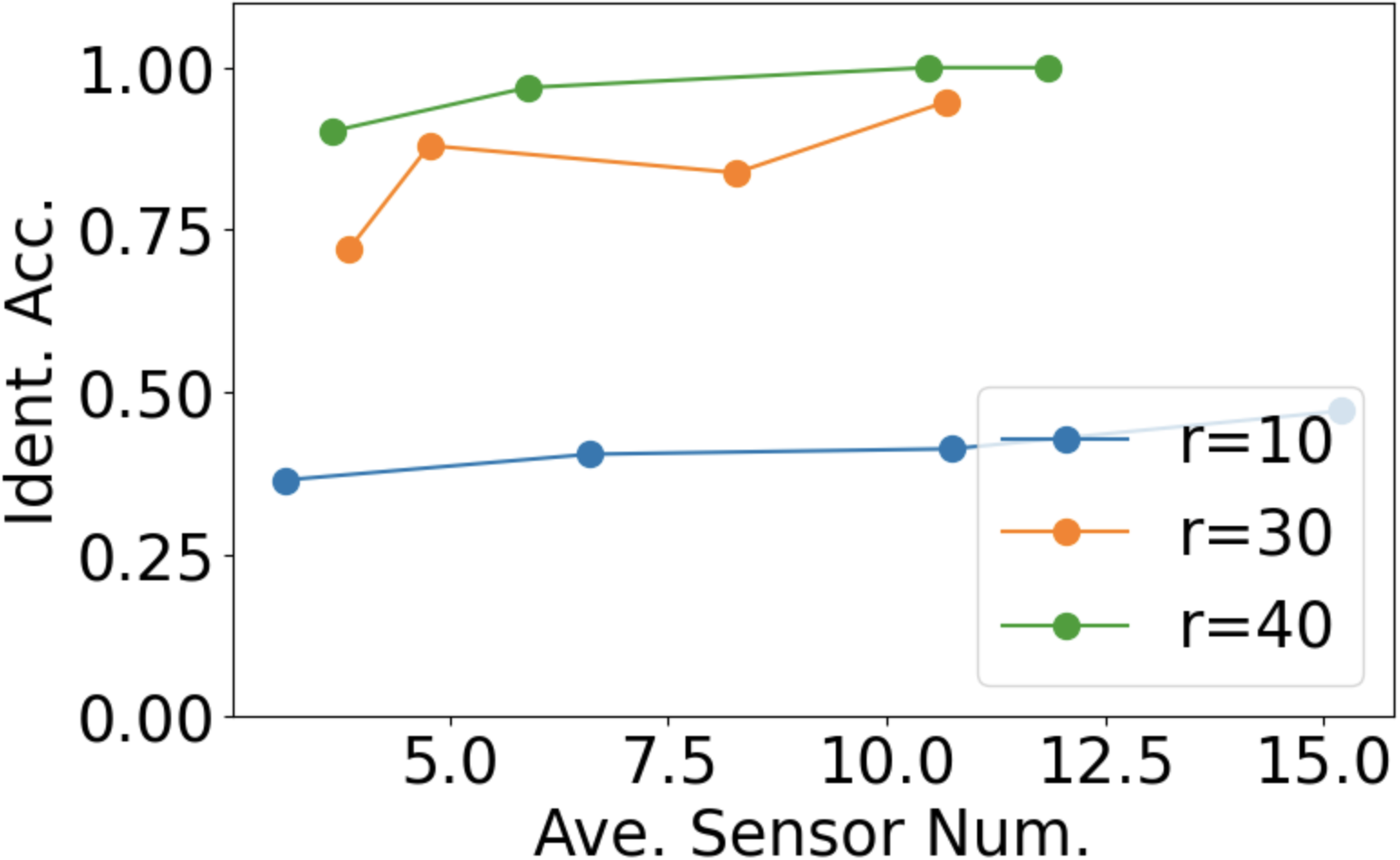}
    \end{minipage}&
    \begin{minipage}[p]{0.3\textwidth}
    \includegraphics[width = \textwidth]{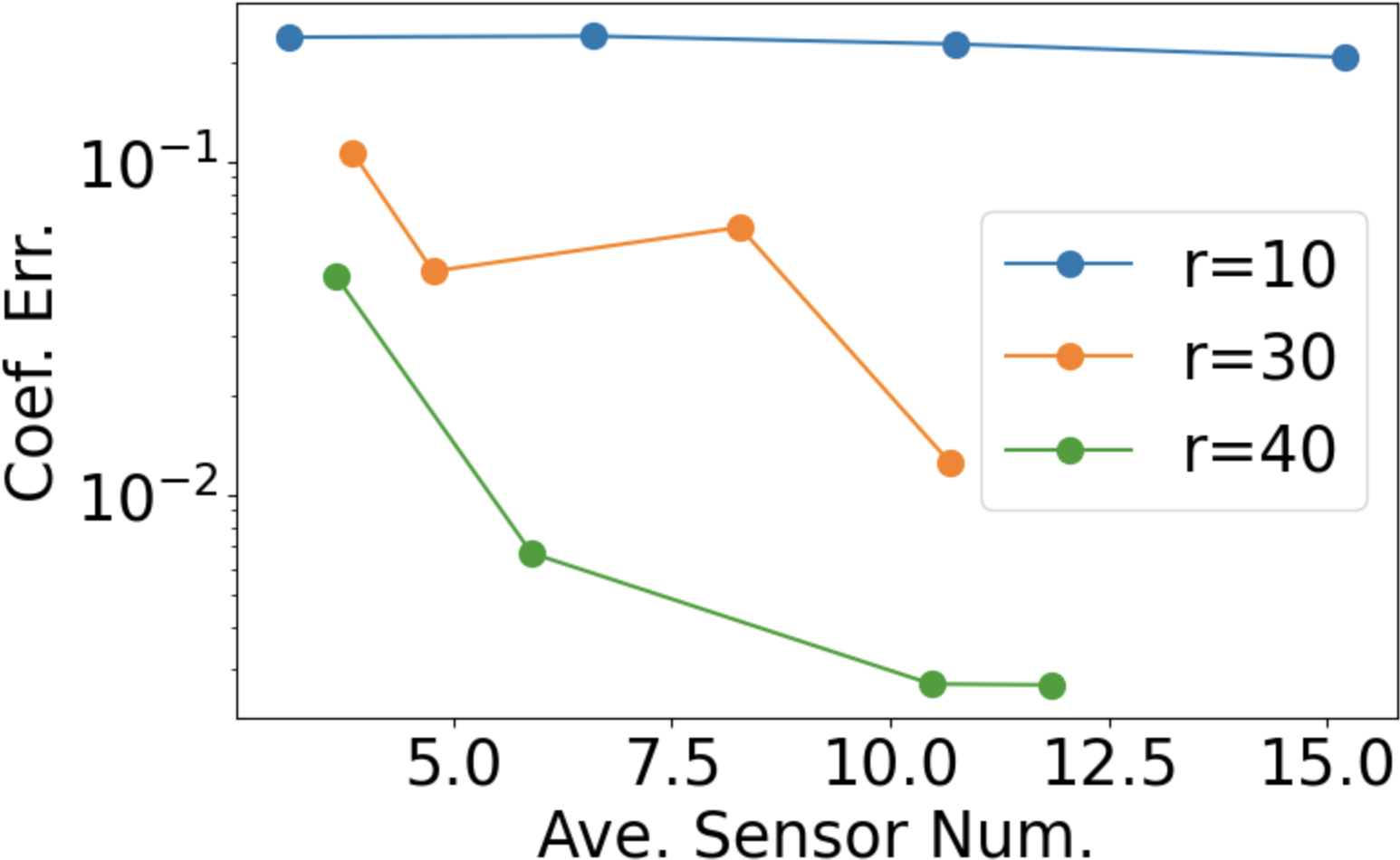}
    \end{minipage}\\
    \quad (c)&\quad (d)
\end{tabular}
\caption{Identification of the Schr\"{o}dinger equation~\eqref{EQ: SCH} with a time-space dependent potential. The solution data for (a) $u$ and (b) $v$ with a set of sensors selected by the proposed scheme. (c) The identification accuracy. (d) The reconstruction error. For each radius, 20 experiments were conducted and the average values were reported.} \label{fig_exp3}
\end{figure}
Figure~\ref{fig_exp3} (a) shows the solutions with a set of sensors selected by our filtering scheme. Notice that these sensors concentrate near the regions where both $u$ and $v$ have significant variations. For this example, considering the computational cost, we consider a dictionary of size 44, where the partial derivatives of orders up to $3$ and products of up to $2$ terms are included. For the same reason, as discussed in Example 2 above, when a relatively large patch size is used, our method is successful in identifying the correct system with an accurate reconstruction of the local coefficients.
\\ \\
{\bf Example 4:} 2D circular flow.\\
Consider a 2D circular flow characterized by the PDE with spatially-dependent coefficients:
\begin{equation}\label{EQ:2DFLOW}
\begin{aligned}
u_t(x,y,t) &= -yu_x(x,y,t) +xu_y(x,y,t),\quad (x,y,t)\in \mathbb{R}^2\times(0,2\pi]\\
u(x,y,0)&=f(x,y), \quad (x,y)\in\mathbb{R}^2\,
\end{aligned}
\end{equation}
for some initial condition $f$. We note that~\eqref{EQ:2DFLOW} can be transformed into a transport equation with a constant angular speed in the polar coordinate, and it admits an exact solution $u(x,y,t) = f(\sqrt{x^2+y^2}\cos(\arctan(y/x)-t), \sqrt{x^2+y^2}\sin(\arctan(y/x)-t))$. In this example, we take $f(x,y) = \cos(4\sqrt{x^2+y^2})\cos(2\arctan(y/x))$, and the initial data over a uniform grid of size $64\times64$ is shown in Figure~\ref{fig_exp_circflow} (a). The dictionary consists of 27 features, where partial derivatives up to order 2 and products up to 2 terms are included. For a range of values of $R>0$, we randomly locate $5$ sensors with a sensing radius of $r$ ($r=3,5,7$) on a circle of radius $R$ centered at the origin and repeat the experiments 20 times. Figure~\ref{fig_exp_circflow} (b) and (c) show the average identification accuracy and the reconstruction errors as the radius of the circles $R$ increases from $3$ to $30$ grid points. Observe that  when the sensors are located close to the origin, the PDE type is hard to identify since the coefficients are all close to zero and the solution is almost constant 1. The misleading low reconstruction error means that all recovered coefficients including those for wrong features are close to zeros. When the sensors are located far away from the origin because the solution changes very slowly and those low-frequency Fourier modes can not be detected when the local patch is too small. We see that both PDE identification and coefficient approximations are improved when the patch size becomes larger. When the sensors are located away from the origin at a moderate distance, the PDE identification is most successful for each patch size. 
\begin{figure}[!htb]
\centering
\begin{tabular}{ccc}
    \begin{minipage}[p]{0.3\textwidth}
    \includegraphics[width = \textwidth,height=0.70\textwidth]{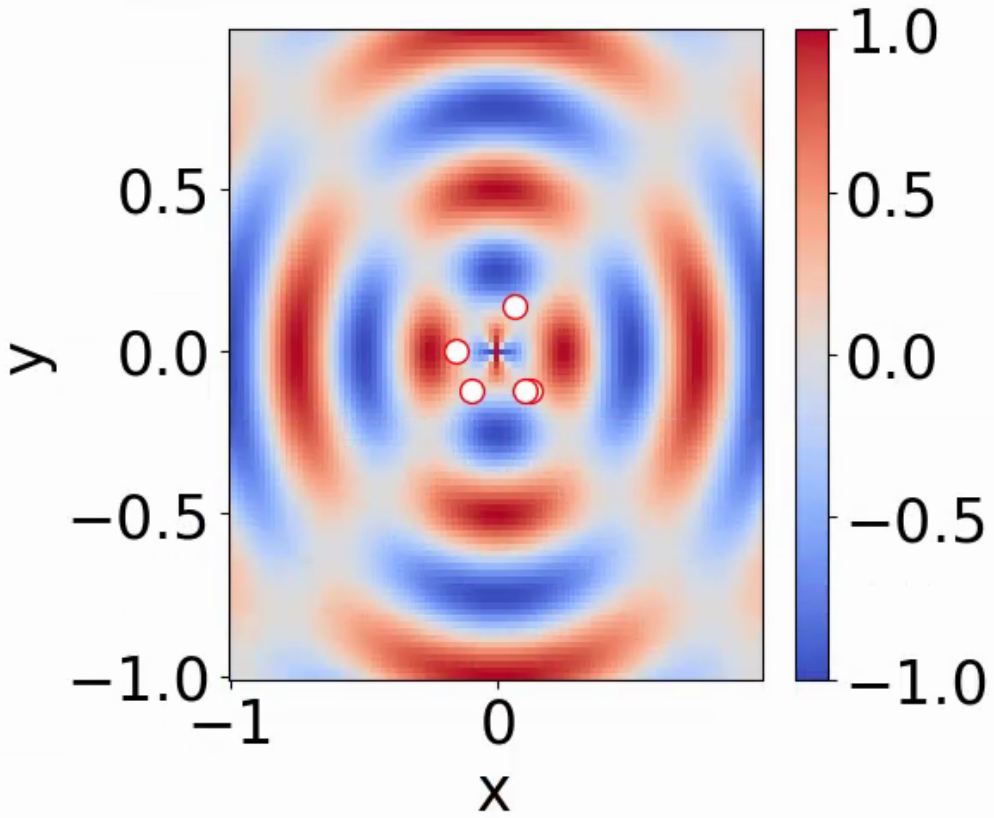}
    \end{minipage}&
    \begin{minipage}[p]{0.3\textwidth}
    \includegraphics[width = \textwidth]{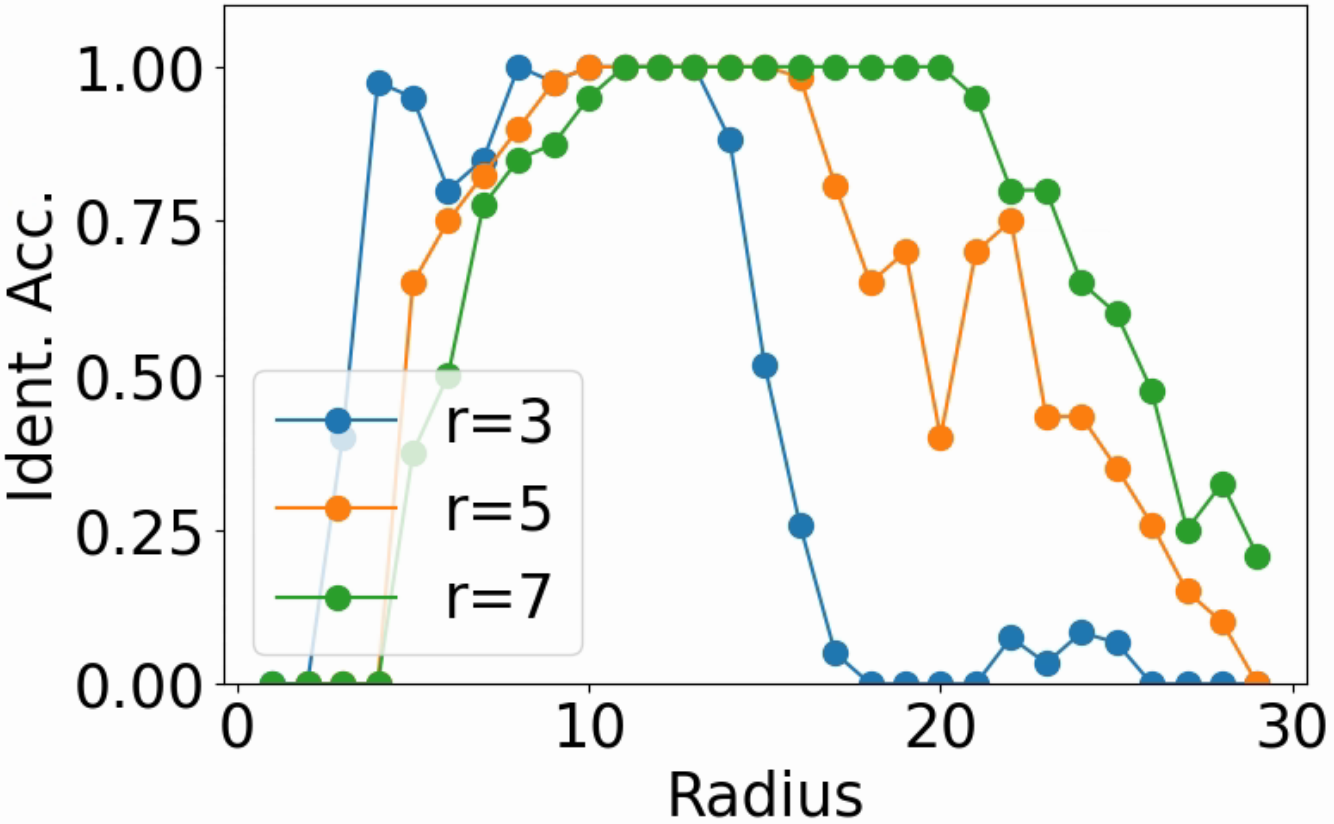}
    \end{minipage}&
    \begin{minipage}[p]{0.3\textwidth}
    \includegraphics[width = \textwidth]{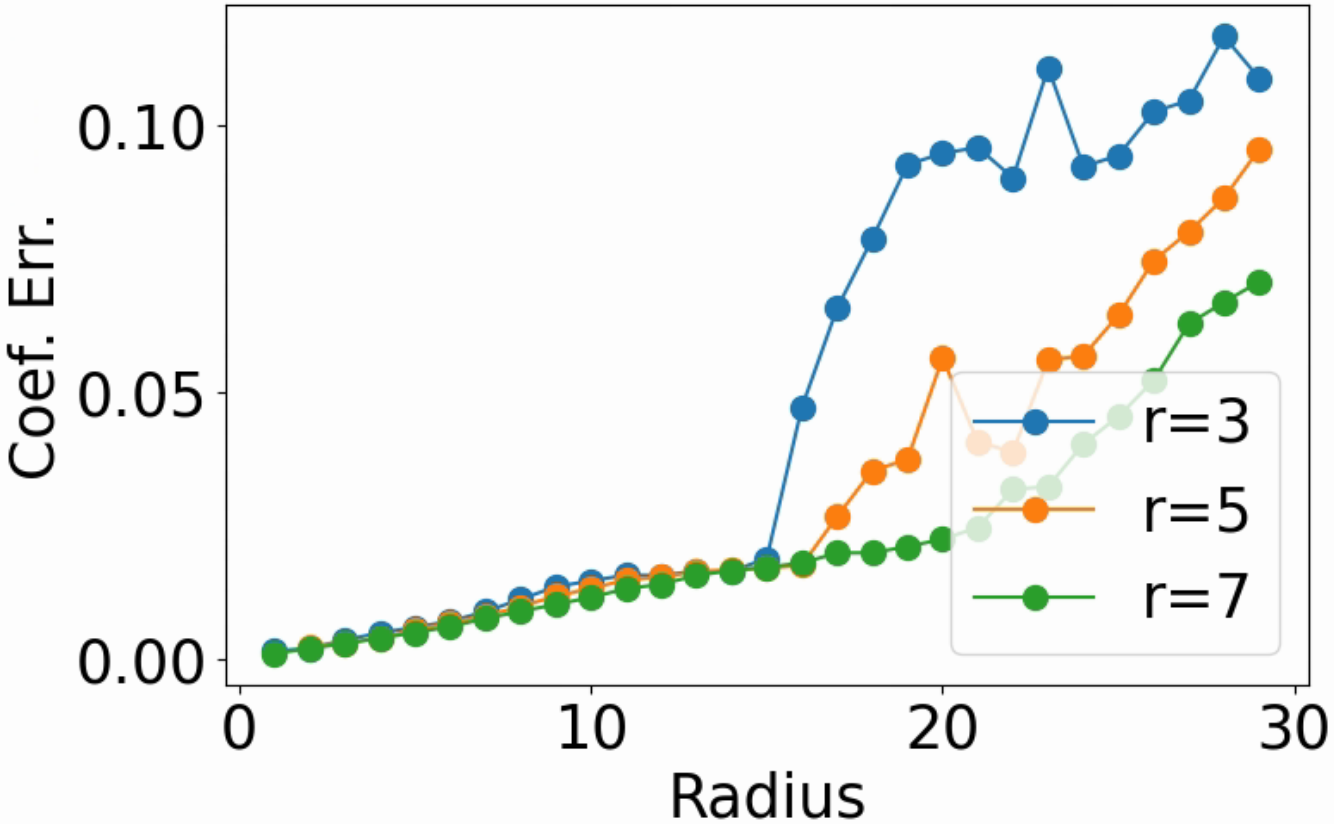}
    \end{minipage}\\
    (a)&(b)&(c)
\end{tabular}
\caption{Identification of a 2D circular flow. (a) The solution data at $t=0$ and 5 randomly distributed sensors. (b) Identification accuracy of 5 sensors with different sensing radii randomly located on a concentric circle whose radius varies from $1$ to $30$ grid points. (c) Reconstruction errors. For each radius, 20 experiments were conducted and the average values were reported. }\label{fig_exp_circflow}
\end{figure}
\subsubsection{Identification with random initial conditions}
In this set of experiments, we demonstrate identification accuracy with respect to different random initial conditions. We consider four different PDEs: 
\begin{enumerate}
    \item [I.] transport equation with constant speed: $u_t(x,t)=2u_x(x,t)$;
    \item [II.] transport equation with variable speed: $u_t =(2+\sin(2\pi t))u_x(x,t)$;
    \item [III.] heat equation with constant coefficient: $u_t(x,t)=0.5u_{xx}(x,t)$;
    \item [IV.]  heat equation with variable coefficient:  $u_t(x,t)=(0.5+0.25\sin(2\pi t))u_{xx}(x,t)$.
\end{enumerate}
All cases are over the domain $(x,t)\in[-1,1)\times (0,0.5]$. Here we take a grid of $200\times 5000$ points. For each equation in the above, we set the random function~\eqref{EQ: RANDOMFUNC} as the initial conditions for a range of mode numbers. For I., we also consider exact data and exact features, i.e., exact partial derivatives of $u$. We randomly select 5 spatially located sensors and each of them can collect data at 10 uniformly distributed time points; the sensing range has a size of $7$ points in space and $31$ points in time. Then we conduct $20$ experiments using the corresponding random initial conditions.
\begin{figure}[!htb]
\centering
\begin{tabular}{cc}
    \begin{minipage}[p]{0.35\textwidth}
    \includegraphics[width = \textwidth]{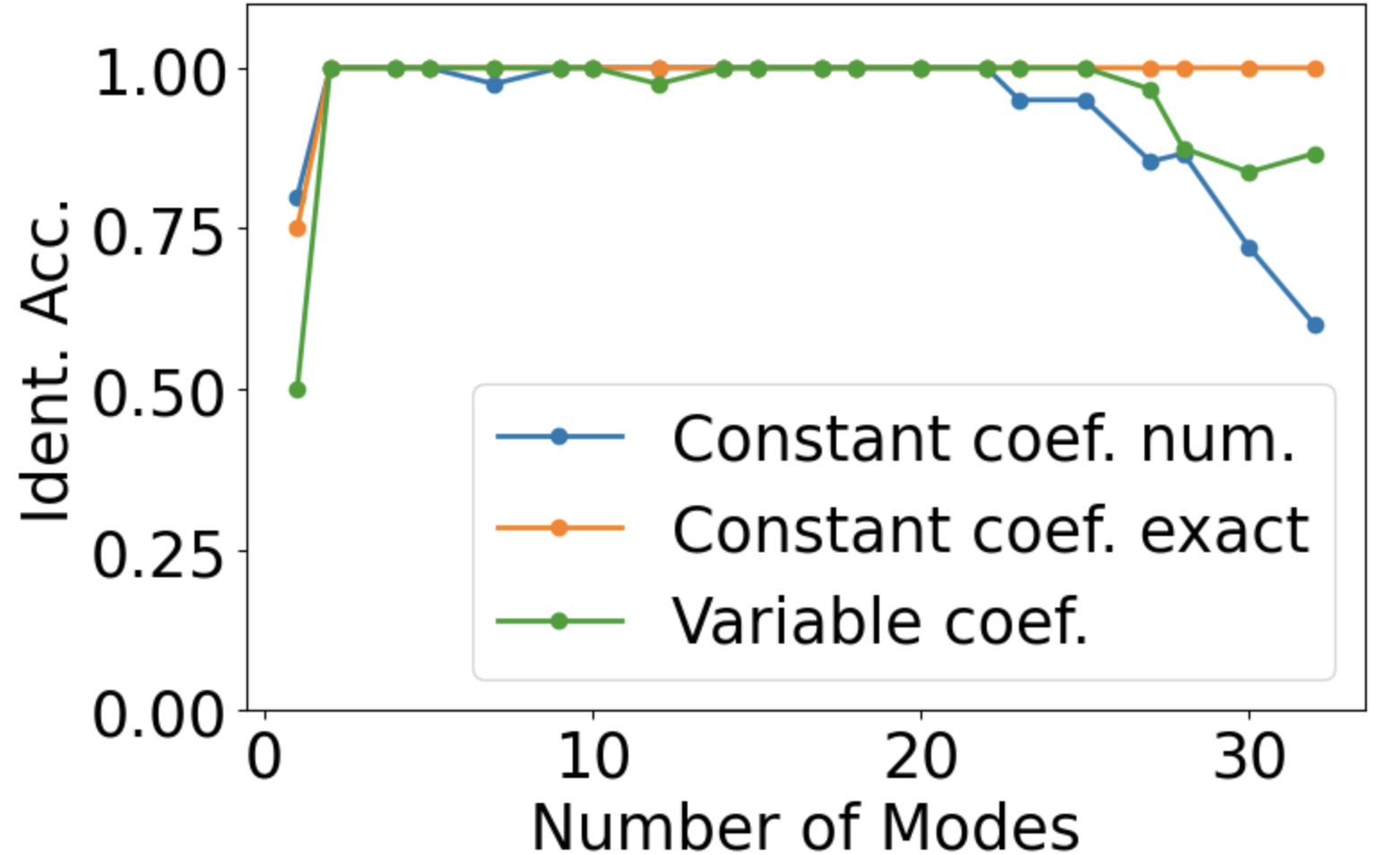}
    \end{minipage}&
    \begin{minipage}[p]{0.35\textwidth}
    \includegraphics[width = \textwidth]{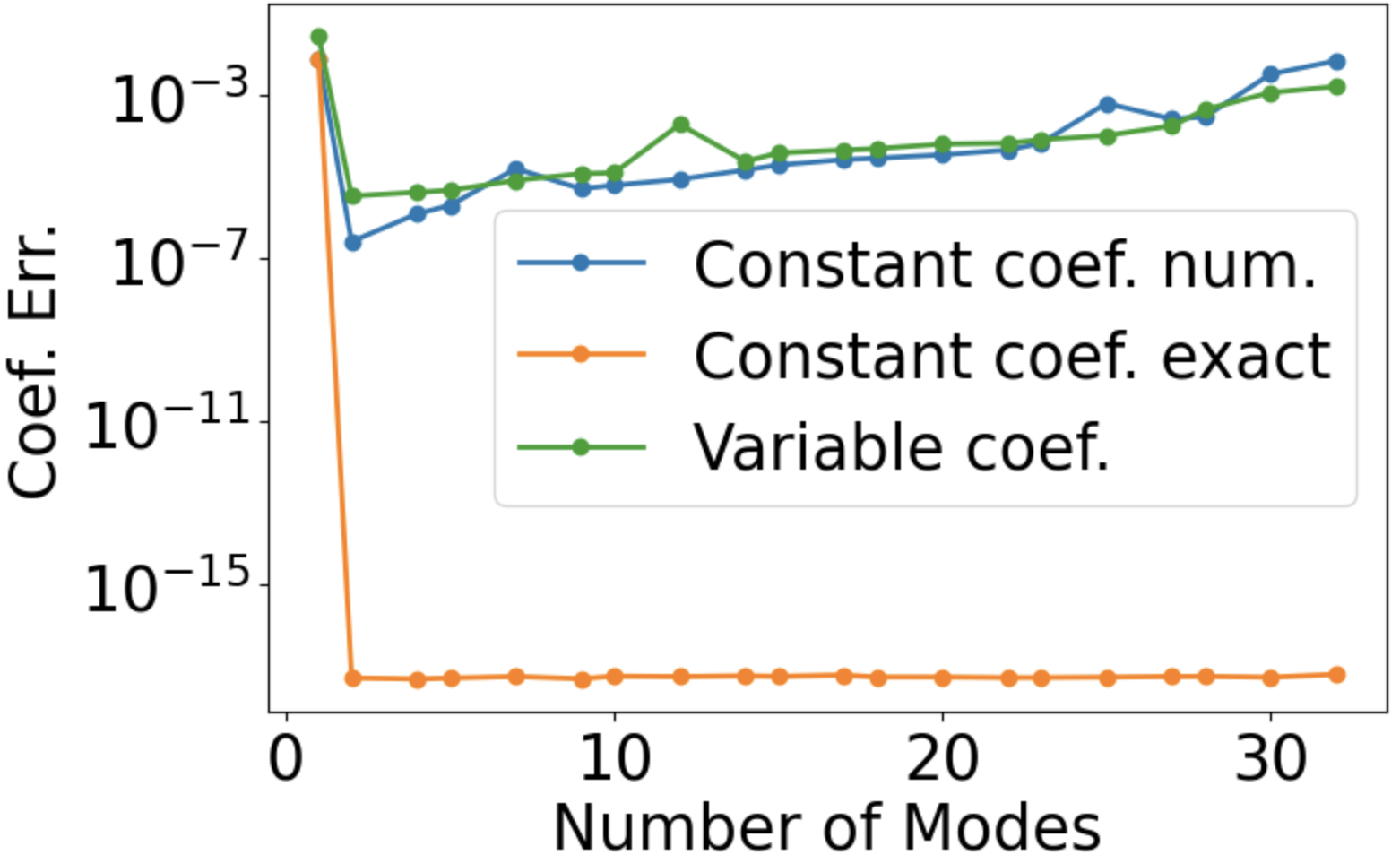}
    \end{minipage}\\\quad\quad (a)&\quad\quad (b)\\
    \begin{minipage}[p]{0.35\textwidth}
    \includegraphics[width = \textwidth]{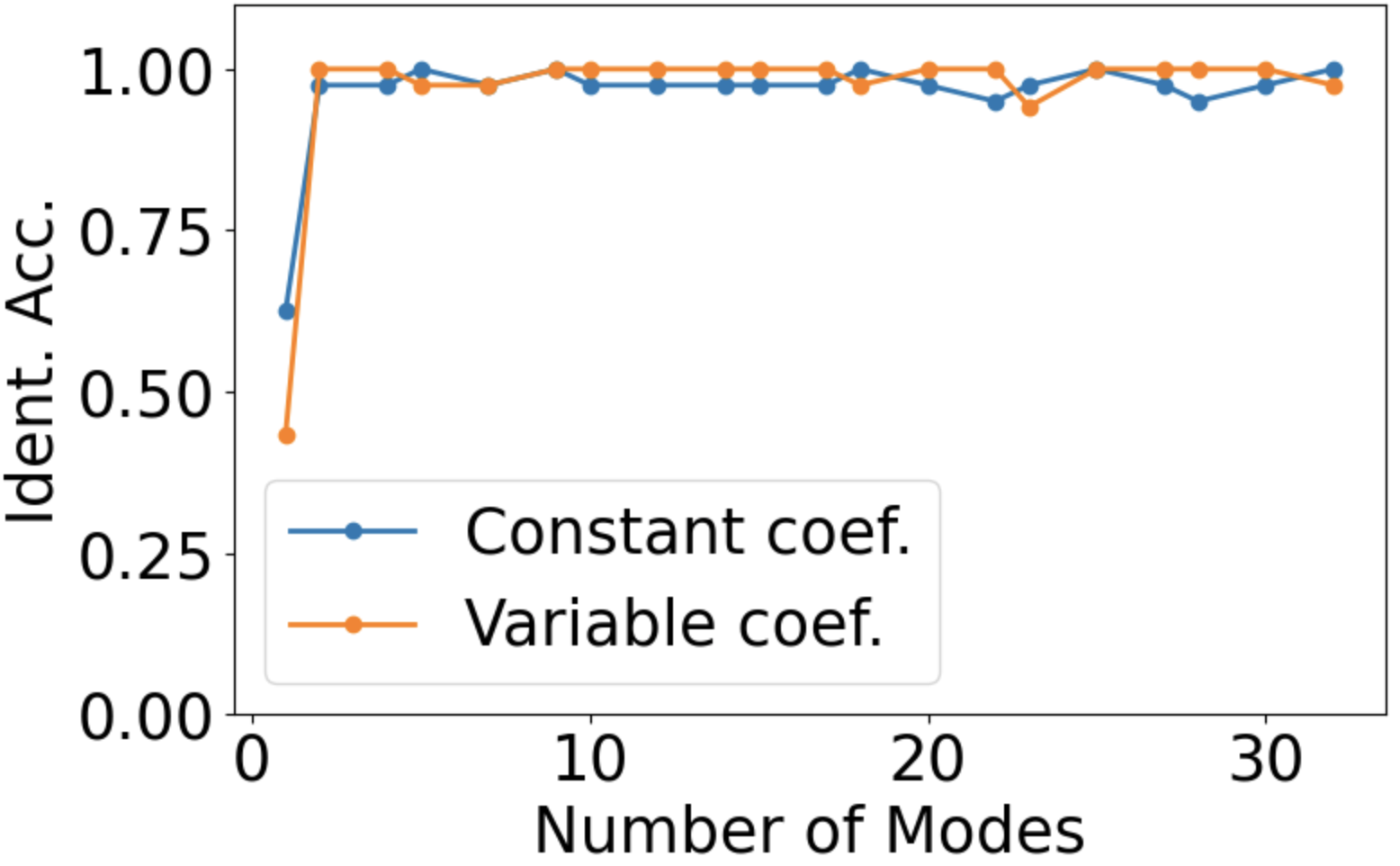}
    \end{minipage}&
    \begin{minipage}[p]{0.35\textwidth}
    \includegraphics[width = \textwidth]{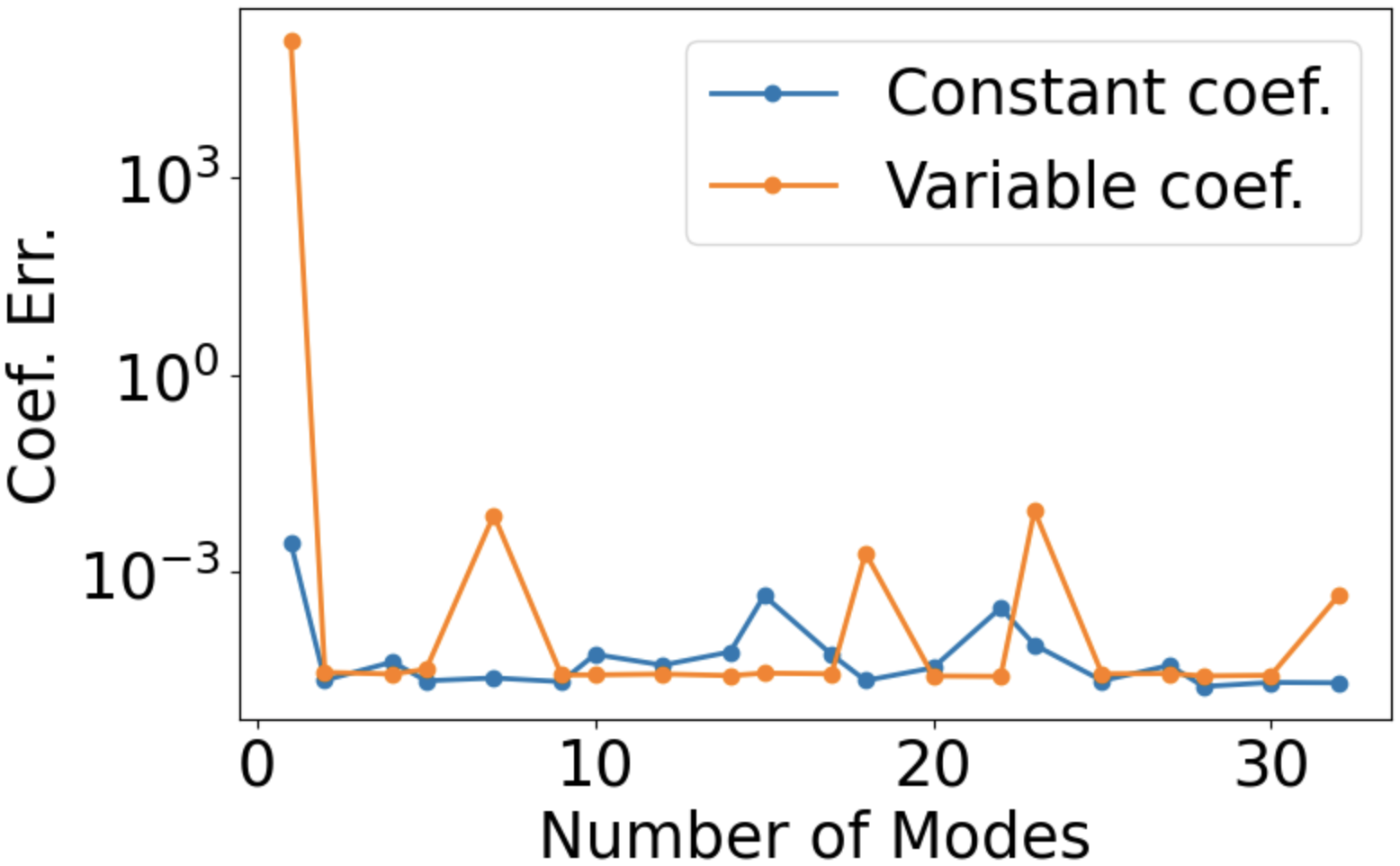}
    \end{minipage}\\
    \quad\quad (c)&\quad\quad (d)
\end{tabular}
\caption{Identification accuracy and reconstruction errors when the initial conditions are random functions with varying numbers of modes. (a) The identification accuracy  and  (b) the reconstruction errors for transport equations. (c) The identification accuracy and (d) the reconstruction errors for heat equations.}\label{fig:random_init}
\end{figure}
Figure~\ref{fig:random_init} (a) and (b) record the average identification accuracy and reconstruction error for the transport equations, and (c) and (d) record those for the heat equations. We observe that, in all cases, the identification accuracy immediately becomes correct once the initial condition has at least 2 different modes since there is only one term in the PDE.  For transport equations, as we can see, when the initial data contain high-frequency modes that can not be resolved by the grid, significant numerical errors will degrade the result, which is further justified by the fact that there is no such issue when using exact data (including derivatives). This is not so obvious for diffusion equations since the solution is smoothed out in time quickly.  We also note that the reconstruction errors for the equation with variable coefficient are greater than that with constant coefficient in general. This is due to the approximation of PDE on each local patch by a PDE with constant coefficients. 

This set of experiments emphasizes the importance of the variability, i.e., diversity of Fourier modes, in solution data. On one hand, if the solution data do not have enough diverse modes or variation, the identification can fail even for PDE identification with constant coefficients. On the other hand, if the solution's rapid variation can not be resolved by the measurement/computation resolution, 
the identification is compromised due to significant numerical errors in the feature approximations.

\subsubsection{Effects of the proposed data-driven patch filtering}\label{exp_patchfilt}
In this set of experiments, we demonstrate the effects of our data-driven patch filtering proposed in Section~\ref{SEC:patch} specifically when the data contains noises. In Figure~\ref{fig_patch_filt} (a), (b), and (c), we show solution data over the domain $[-1,1]$ of 
\begin{enumerate}
    \item [I.] transport equation with variable speed $u_t(x,t)=1000 t \sin(4\pi t/0.03)u_x(x,t)$ for $t\in [0,0.03]$;
    \item [II.] heat equation $u_t(x,t) = 0.5u_{xx}(x,t)$ for $t\in[0,0.03]$;
    \item [III.] inviscid Burgers' equation $u_t = 1.1u(x,t)u_x(x,t)$ for $t\in[0,0.6]$.
\end{enumerate}
 All cases have periodic boundary conditions and take the bump function as the initial conditions. Upon the solution data $u$, we also add the noise with intensity $p\%$ of the standard deviation of $u$. For (a), we added $5\%$; for (b) and (c), we added $0.5\%$. No denoising process was applied. 

For all cases, we randomly select 10 spatially located sensors and each of them can collect data within a rectangle ($7$ neighboring points in each space dimension and $11$ neighboring points in time) centered at 10 points uniformly distributed over the observation duration. They are marked as white dots in Figure~\ref{fig_patch_filt}. 
\begin{figure}[!htb]
\begin{tabular}{ccc}
    \begin{minipage}[p]{0.3\textwidth}
    \includegraphics[width = \textwidth]{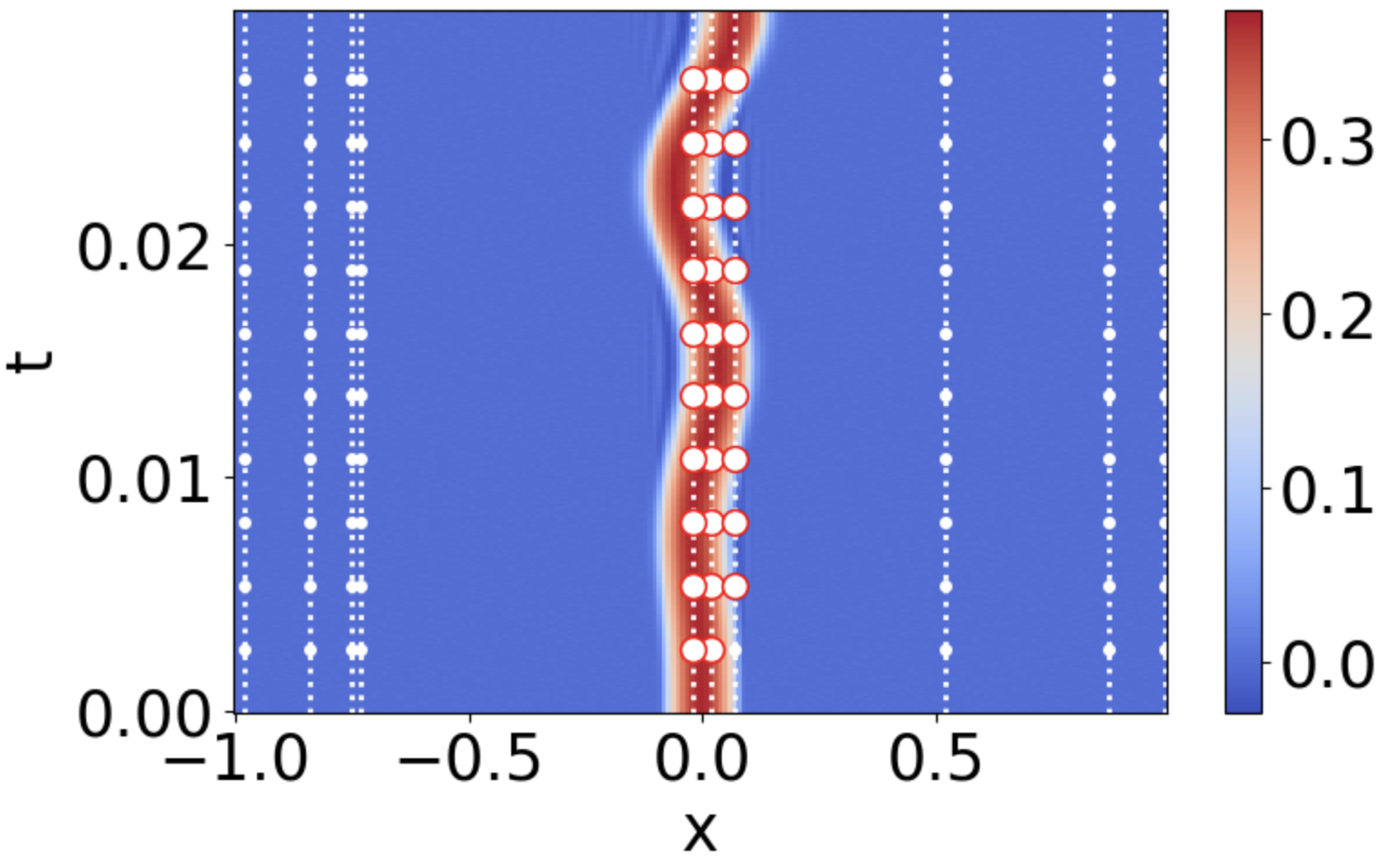}
    \end{minipage}&
    \begin{minipage}[p]{0.3\textwidth}
    \includegraphics[width = \textwidth]{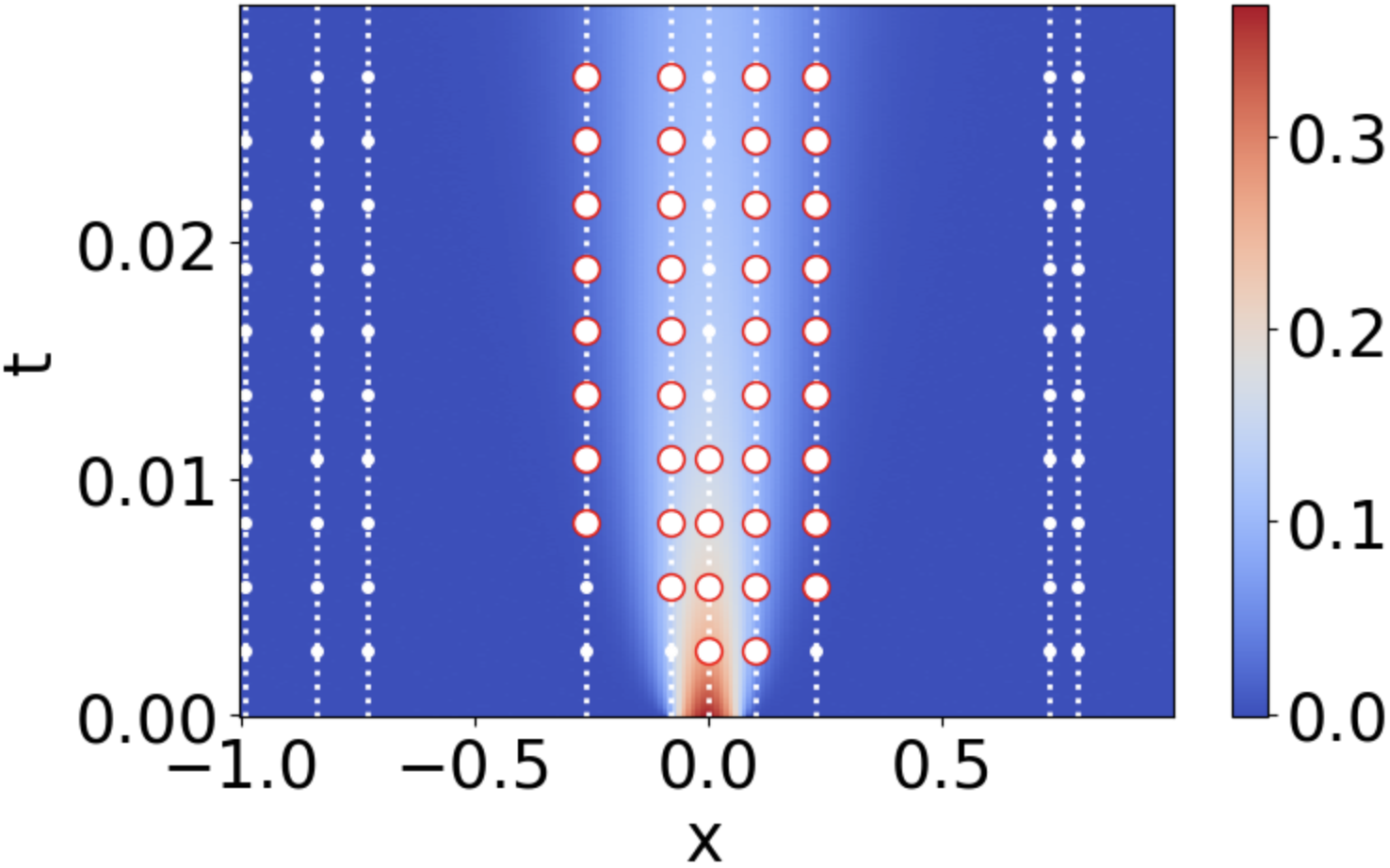}
    \end{minipage}&
    \begin{minipage}[p]{0.3\textwidth}
    \includegraphics[width = \textwidth]{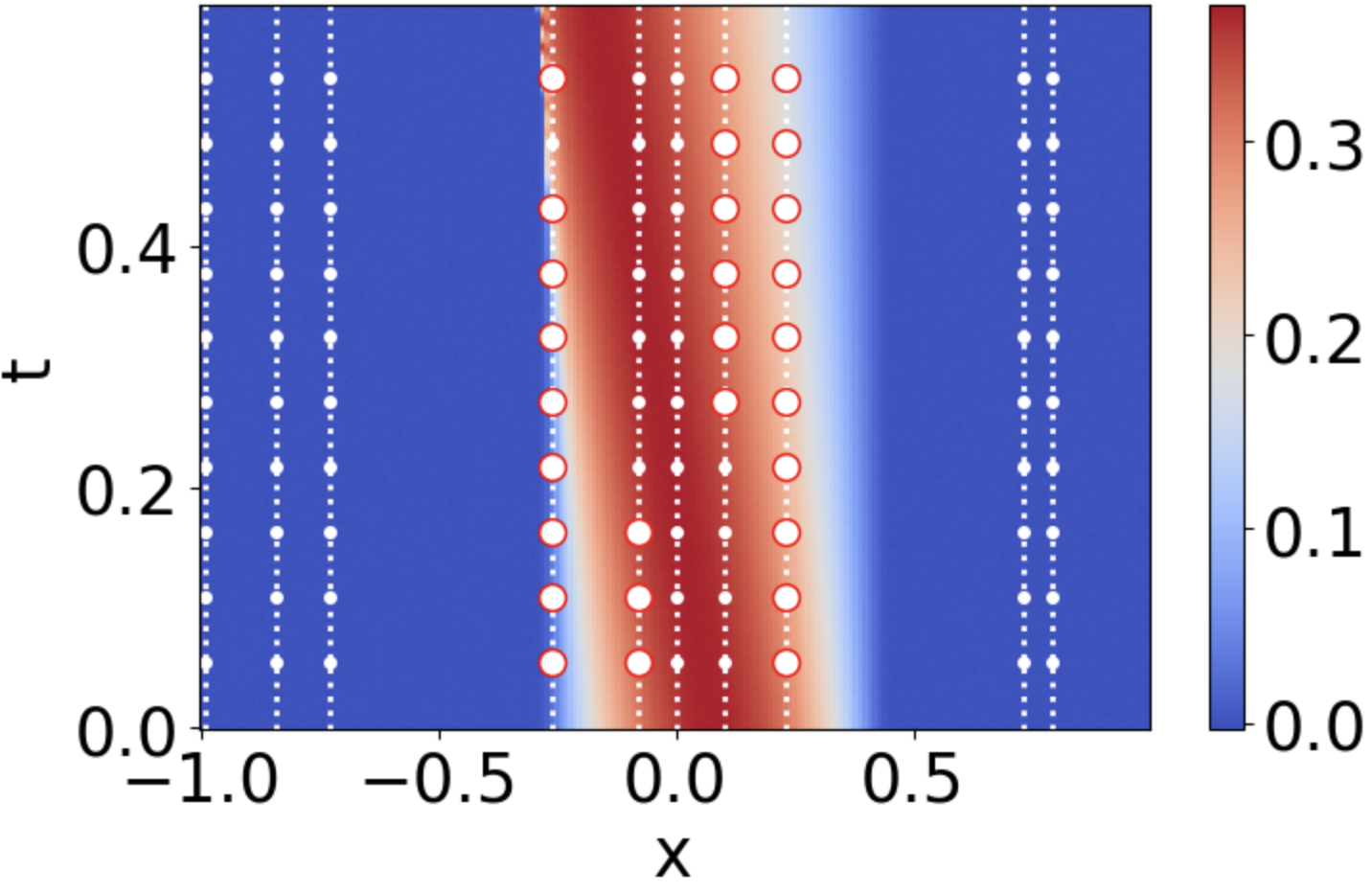}
    \end{minipage}
    \\
    (a)&(b)&(c)
\end{tabular}
\caption{Data-driven patch selection with noisy data.}\label{fig_patch_filt}
\end{figure}
\begin{table}[!htb]
\centering
\caption{Comparison of average identification accuracy between CaSLR with patch trimming and without.}\label{tab_patch_filt}
\begin{tabular}{cccc}
\hline\hline
&Transport&Heat&Burgers\\\hline
With Trim&$0.85$&$0.85$&$0.45$\\
Without Trim&$0.80$&$0.80$&$0.20$
\end{tabular}
\end{table}

The red circled dots show the centers of patches selected by our data-driven approach, and based on these limited data, CaSLR correctly identified the PDE types in all cases. Noticeably, the selected patches avoid those regions where the solution is close to a constant. For example, in (a), the selected patches meander along with the compactly supported solution; in (b), these patches are located near transitioning area between the non-zero solution data and the flat region, where the derivative  values are non-trivial; in (c), similarly to (b), the selected patches are located near the  transitioning area. Under the criterion that the collected data should be numerically stable, we note that the patch centered at the singularity of the solution is not chosen. We also emphasize that, if we were to use all the patches in (a), then the PDE type may be wrongly identified. In Table~\ref{tab_patch_filt}, we record the average identification accuracy of the proposed method applied to the equations above when the sensors are randomly located, and we observe that with the patch trimming, the accuracy is improved in all cases. Under the influence of noise, the true dynamics can be submerged by random variations. These examples show that our proposed data-drive patch selection scheme provides robustness to some extent against the noise for PDE identification.

\section{Conclusion}
We have studied a few basic questions for PDE learning from observed solution data. Using various types of linear evolution PDEs, we have shown 1) how the approximate dimension (richness) of the data space spanned by all snapshots along a solution trajectory depends on the differential operator and initial data, and 2) the identifiability of a differential operator from its solution data. Moreover, we propose a  Consistent and Sparse Local Regression(CaSLR) method, which enforces global consistency and involves as few terms as possible from the dictionary using local measurement data from a single solution trajectory, for general PDE identification. 

\section*{Acknowledgment}
H. Zhao’s research is partially supported by NSF grant DMS-2012860.
\appendix
\section{Proof of~(\ref{eq_est_mean}) and~(\ref{eq_est_var})}\label{append_detail}
In the following, we assume $N > 1$. Recalling that the variance of a random variable $X$ can be expressed as $\mathbb{E}[X^2]-(\mathbb{E}[X])^2$, we get
\begin{align*}
\sum_{m=1}^N\mathbb{E}\left[(\zeta_m-\frac{1}{N}\sum_{n=1}^N\zeta_n)^2\right]&=\sum_{m=1}^N\mathbb{E}\left[\zeta_m^2\right]-\frac{1}{N}\mathbb{E}\left[(\sum_{n=1}^N\zeta_n)^2\right]\\
&=\frac{N(B-1)}{B}\sigma^2+\sum_{m=1}^N\mu_m^2-\frac{B-1}{B}\sigma^2-\frac{1}{N}(\sum_{n=1}^N\mu_n)^2\\
&=\frac{(N-1)(B-1)}{B}\sigma^2+\sum_{m=1}^N\mu_m^2-\frac{1}{N}(\sum_{n=1}^N\mu_n)^2.
\end{align*}
Hence by defining the estimator:
\begin{align*}
\widehat{\sigma}^2 = \frac{B\sum_{m=1}^N\left(\zeta_m-\frac{1}{N}\sum_{n=1}^N\zeta_n\right)^2}{(N-1)(B-1)}
\end{align*}
and noting the Lipschitz assumption, we have
\begin{align*}
\mathbb{E}[\widehat{\sigma}^2-\sigma^2]&=\frac{B\left(\sum_{m=1}^N\mu_m^2-\frac{1}{N}(\sum_{n=1}^N\mu_n)^2\right)}{(N-1)(B-1)}\leq\frac{B\sum_{m=1}^N\mu_m^2}{(N-1)(B-1)}\leq\frac{DNBL^2R^2}{(N-1)(B-1)}.
\end{align*}
As for the variance of the estimator, we  notice that since $\zeta_n$, $n=1,\dots, N$ are independent Gaussian random variables, if we denote $S:=\sqrt{\frac{B-1}{B}}\sigma$, $\sum_{n=1}^N\zeta_n^2/S^2$ has a noncentral chi-squared distribution whose mean is $N+\sum_{n=1}^N\mu_n^2/S^2$, and variance is $2(N+2\sum_{n=1}^N\mu_n^2/S^2)$; and $(\sum_{n=1}^N\zeta_n)^2/(NS^2)$ also has a noncentral chi-squared distribution whose mean is $1+(\sum_{n=1}^N\mu_n)^2/(NS^2)$, and variance is $2(1+2(\sum_{n=1}^N\mu_n)^2/(NS^2))$. First, we compute the covariance 
\begin{align*}
 &\text{Cov}(\sum_{n=1}^N\zeta_n^2,(\sum_{n=1}^N\zeta_n)^2)=\mathbb{E}[\sum_{n=1}^N\zeta_n^2(\sum_{m=1}^N\zeta_m)^2]-\mathbb{E}[\sum_{n=1}^N\zeta_n^2]\mathbb{E}[(\sum_{n=1}^N\zeta_n)^2]\\
 &=\mathbb{E}[\sum_{n=1}^N\zeta_n^2(\sum_{m=1}^N\zeta_m)^2]-(NS^2 +\sum_{n=1}^N \mu_n^2)(NS^2 +(\sum_{n=1}^N\mu_n)^2 ).
\end{align*}
Focusing on the first term, we have
\begin{align*}
&\mathbb{E}[\sum_{n=1}^N\zeta_n^2(\sum_{m=1}^N\zeta_m)^2] = \sum_{n=1}^N\mathbb{E}[\zeta_n^2\sum_{m=1}^N\zeta_m^2]+\sum_{n=1}^N\mathbb{E}[\zeta_n^2\sum_{i\neq j}\zeta_i\zeta_j]\\
&=\sum_{n=1}^N\mathbb{E}[\zeta_n^4]+\sum_{n=1}^N\left(\mathbb{E}[\zeta_n^2]\sum_{m=1,m\neq n}^N\mathbb{E}[\zeta_m^2]\right)+2\sum_{n=1}^N\left(\mathbb{E}[\zeta_n^3]\sum_{m=1,m\neq n}^N\mathbb{E}[\zeta_m]\right)+\\
&\sum_{n=1}^N\left(\mathbb{E}[\zeta_n^2]\sum_{i\neq n,j\neq n,i\neq j}\mathbb{E}[\zeta_i]\mathbb{E}[\zeta_j]\right)\\
&=\sum_{n=1}^N(\mu_n^4+6\mu_n^2S^2+3S^4)+\sum_{n=1}^N\left((\mu_n^2+S^2)\sum_{m=1,m\neq n}^N(\mu_m^2+S^2)\right)+\\
&2\sum_{n=1}^N\left((\mu_n^3+3\mu_nS^2)\sum_{m=1,m\neq n}^N\mu_m\right)+\sum_{n=1}^N\left((\mu_n^2+S^2)\sum_{i\neq n,j\neq n,i\neq j}\mu_i\mu_j\right)\\
&=\sum_{n=1}^N\mu_n^2\left(\sum_{n=1}^N\mu_n\right)^2+\left(2(N-1)\sum_{n=1}^N\mu_n^2+6\left(\sum_{n=1}^N\mu_n\right)^2+(N-2)\sum_{n\neq m}\mu_n\mu_m\right)S^2+\\
&N(N+2)S^4.
\end{align*}
Hence, we have
\begin{align*}
&\text{Cov}(\sum_{n=1}^N\zeta_n^2,(\sum_{n=1}^N\zeta_n)^2)\\
  &=\left((N-2)\sum_{n=1}^N\mu_n^2+(6-N)\left(\sum_{n=1}^N\mu_n\right)^2+(N-2)\sum_{n\neq m}\mu_n\mu_m\right)S^2+2NS^4\\
  &=4\left(\sum_{n=1}^N\mu_n\right)^2S^2+2NS^4 .
\end{align*}
Now we note that
\begin{align*}
    &\text{Var}\left[\sum_{m=1}^N\left(\zeta_m-\frac{1}{N}\sum_{n=1}^N\zeta_n\right)^2\right]=\text{Var}\left[\sum_{m=1}^N\zeta^2_m\right]+\frac{1}{N^2}\text{Var}\left[(\sum_{n=1}^N\zeta_n)^2\right]-\frac{2}{N}\text{Cov}(\sum_{m=1}^N\zeta_m^2,(\sum_{m=1}^N\zeta_m)^2)\\
    &=2(NS^4+2\sum_{n=1}^N\mu_n^2S^2)+2S^4(1+2(\sum_{n=1}^N\mu_n)^2/(NS^2))-\frac{8}{N}\left(\sum_{n=1}^N\mu_n\right)^2S^2-4S^4.
\end{align*}
After simplification, we get
\begin{align*}
    &\text{Var}\left[\sum_{m=1}^N\left(\zeta_m-\frac{1}{N}\sum_{n=1}^N\zeta_n\right)^2\right]=2(N-1)S^4+4\left(\sum_{n=1}^N\mu_n^2-\frac{\left(\sum_{n=1}^N\mu_n\right)^2}{N}\right)S^2.
\end{align*}
Considering the Lipschitz assumption, we obtain
\begin{align*}
    &\text{Var}\left[\sum_{m=1}^N\left(\zeta_m-\frac{1}{N}\sum_{n=1}^N\zeta_n\right)^2\right]\leq 2(N-1)S^4+4NDL^2R^2S^2.
\end{align*}
Therefore, denoting $\gamma =4DL^2R^2$, then we get
\begin{align*}
   \text{Var}[\widehat{\sigma}^2]&\leq\frac{2(N-1)(B-1)^2\sigma^4+\gamma N B(B-1)\sigma^2}{(N-1)^2(B-1)^2}\\
   &=\frac{2\sigma^4}{N-1}+\frac{NB\sigma^2\gamma}{(N-1)^2(B-1)}.
\end{align*}

\bibliographystyle{plain}
\bibliography{main}
\end{document}